\newtheorem{theorem}{Theorem}
\newtheorem{lemma}{Lemma}
\newtheorem{prop}{Proposition}
\newtheorem{cor}{Corollary}
\newtheorem{rem}{Remark}
\newtheorem{definition}{Definition}
\newenvironment{proof}[1][]{{\medskip\noindent\it Proof #1: }}{ \hfill$\square$\\ \\}
\numberwithin{equation}{section}
\DeclareMathOperator{\dv}{div}
\begin{document}
\author{Alexandre
Munnier\footnote{Universit\'e de Lorraine and CNRS, Institut \'Elie Cartan de Lorraine, UMR 7502, Vand\oe uvre-l\`es-Nancy, F-54506, France. \texttt{alexandre.munnier@univ-lorraine.fr}}\and Karim Ramdani\footnote{Inria, Villers-l\`es-Nancy, F-54600, France. \texttt{karim.ramdani@inria.fr}}}
\title{Asymptotic analysis of a Neumann problem in a domain with cusp. \\ Application to the collision problem of rigid bodies in a perfect fluid. }
\maketitle
\begin{abstract}
We study a two dimensional collision problem for a rigid solid immersed in a cavity filled with a perfect fluid. We are led to investigate the asymptotic behavior of the Dirichlet energy associated to the solution of a Laplace Neumann problem as the distance $\varepsilon>0$ between the solid and the cavity's bottom tends to zero. Denoting by $\alpha>0$ the tangency exponent at the contact point, we prove that the solid always reaches the cavity in finite time, but with a non zero velocity for $\alpha <2$ (real shock case), and with null velocity for $\alpha \geqslant 2$ (smooth landing case). Our proof is based on a suitable change of variables sending to infinity the cusp singularity at the contact. More precisely, for every $\varepsilon\geqslant 0$, we transform the Laplace Neumann problem into a generalized Neumann problem set on a domain containing a horizontal strip $]0,\ell_\varepsilon[\times ]0,1[$, where $\ell_\varepsilon\to +\infty$. 
\end{abstract}
{\bf Keywords.} Neumann Laplacian, cusp, asymptotic analysis,
singular perturbation, fluid-structure, contact, collision.
\section{Introduction: motivation, problem setting and statement of the main results}
In this paper, we are interested in the asymptotic analysis of a singular perturbed problem for the Laplace Neumann equation in a domain $\Omega_{\varepsilon}$ depending on a small parameter $\varepsilon$. The typical configuration we have in mind is the one depicted on Figure~\ref{fig1}, namely the situation where $\Omega_{\varepsilon}$ is the domain located between two smooth surfaces situated at a distance $\varepsilon$ and touching each other at one single cuspid point (the origin) in the limit case $\varepsilon=0$. 

\begin{figure}[h]
\centering
\input{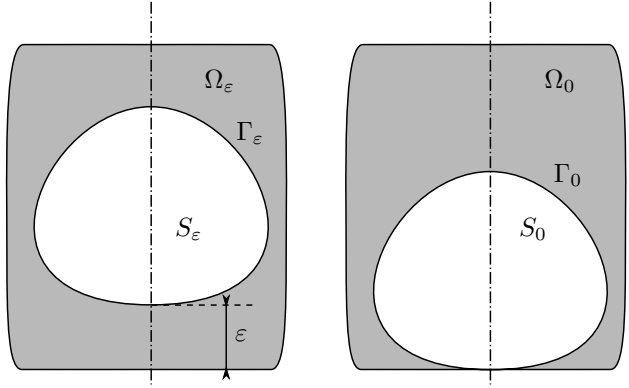}
\caption{\label{fig1} The symmetric domains $\Omega_\varepsilon$  for $\varepsilon>0$ and the singular limit domain $\Omega_0$. }
\end{figure}

More precisely, we denote by $\mathsf{C}$ a smooth, open, bounded and connected set in $\mathbb R^2$ and we assume that $\mathsf{C}$ is symmetric with respect to the ordinate axis, that the origin belongs to the boundary of $\mathsf{C}$ and that, near the origin, the boundary $\partial\mathsf{C}$ 
is locally a straight line, the domain $\mathsf{C}$ being locally situated above $\partial\mathsf{C}$. We denote by $S_0$ a compact, connected set, symmetric with respect to the ordinate axis as well and such that, for 
some $\varepsilon^\ast>0$, the inclusion $ S_\varepsilon:= S_0+\varepsilon e_2\subset \mathsf{C}$ holds for every $0< \varepsilon\leqslant \varepsilon^\ast$ (throughout the paper, $\{e_1,e_2\}$ stands for an orthonormal basis). With these settings, we have $\Omega_\varepsilon:=\mathsf{C}\setminus  S_\varepsilon$ for every $0\leqslant \varepsilon\leqslant \varepsilon^\ast$ and we assume that there 
exists $\delta^\ast>0$ such that $\Omega_\varepsilon$ is locally, near the origin described by:
$$\{\xi:=(\xi_1,\xi_2)\in\mathbb R^2\,:\,|\xi_1|<\delta^\ast,\,0<\xi_2<H_\varepsilon(\xi_1)\},$$
where 
$$H_\varepsilon(\xi_1):=\kappa |\xi_1|^{1+\alpha}+\varepsilon,$$ 
the constants $\kappa>0$ and $\alpha>0$ (called the tangency exponent) being given.

The Neumann problem we shall consider is the following one:
\begin{subequations}
\label{Sys:intro}
\begin{alignat}{3}
-\Delta  U_\varepsilon&= F_\varepsilon&\quad&\text{in }\Omega_\varepsilon\\
\partial_n  U_\varepsilon&= G_\varepsilon&&\text{on }\Gamma_\varepsilon\\
\partial_n  U_\varepsilon&=0&&\text{on }\partial\mathsf{C},
\end{alignat}
\end{subequations}
where $\Gamma_\varepsilon:=\partial  S_\varepsilon$ denotes the boundary of the inclusion, $n$ is the unit normal to $\partial\Omega_\varepsilon$ directed toward the exterior of $\Omega_\varepsilon$ and $ F_\varepsilon$  and $ G_\varepsilon$ 
are given functions respectively defined on $\Omega_\varepsilon$ and $\Gamma_\varepsilon$ and satisfying the compatibility condition:
\begin{equation}
\label{compt_encore}
\int_{\Omega_\varepsilon} F_\varepsilon\,{\rm d}\xi+\int_{\Gamma_\varepsilon} G_\varepsilon\,{\rm d}s=0.
\end{equation}

Our main objective in this paper is to study the asymptotic behavior of $U_{\varepsilon}$ as $\varepsilon\to 0^+$, and more specifically, the behavior of the associated Dirichlet energy 
$\int_{\Omega_\varepsilon}|\nabla U_\varepsilon|^2\,{\rm d}\xi$ as $\varepsilon\to 0^+$. For simplicity, only volume data $ F_\varepsilon$ and boundary data $ G_\varepsilon$ symmetric with respect to the ordinate axis will be considered. 

Let us now describe the physical problem motivating such an asymptotic analysis. 
\subsection{Underlying  fluid-structure contact problem}
We are interested in investigating the possibility of a collision between a neutrally buoyant rigid solid with the bottom of the bounded cavity where it is immersed. In addition to the solid, the cavity is supposed to be filled with a perfect fluid. 

Sticking to the notation of the previous  section, we denote by $\mathsf{C}$ the cavity ($\mathsf{C}$ has the same properties as in the previous section), and for every time $t>0$, by $S_t$ the domain occupied by the solid and by $\Omega_t$ the fluid domain. 

To simplify, we shall assume furthermore that at the initial time:
\begin{enumerate}
\item $ S_{t=0}= S_{\varepsilon^\ast}$ ($S_{\varepsilon^\ast}$ has the same properties as above, in particular regarding topology and symmetry).
\item \label{irrota}The flow is irrotational, which entails, according to Helmholtz's third theorem, that it will remain irrotational for every time;
\item The velocity of the solid is vertical.
\end{enumerate}
With these settings, for symmetry reason, the motion of the solid will take place along the ordinate axis only and $S_t$ and $\Omega_t$ will remain symmetric with respect to  this axis at every moment. 

In particular, the lowest point of $S_t$ (which we assume, for the time being, to be unique) has coordinates $(0,\varepsilon(t))$ and the velocity of the solid is therefore $(0,\varepsilon'(t))$ (here and subsequently, the prime denotes the time derivative).

Classically in ideal fluid theory, according to Hypothesis~\ref{irrota} above, we introduce at every time $t\geqslant 0$ the Kirchhoff potential $\varphi(t,\cdot)$ related to the vertical motion of the solid. This function solves a 
Laplace equation in $\Omega_t$ with Neumann boundary conditions, namely:
\begin{subequations}
\label{Sys:potentiel}
\begin{alignat}{3}
-\Delta \varphi(t,\cdot)&=0&\quad&\text{in }\Omega_t\\
\partial_n\varphi(t,\cdot)&=n_2&&\text{on }\Gamma_t\\
\partial_n\varphi(t,\cdot)&=0&&\text{on }\partial \mathsf{C},
\end{alignat}
\end{subequations}
where $\Gamma_t:=\partial  S_t$ and $n=(n_1,n_2)$ stands for the unit normal to $\partial\Omega_t$ directed towards the outside of the fluid. The Eulerian velocity of the fluid reads:
$$u(t,\cdot)=\varepsilon'(t)\nabla\varphi(t,\cdot)\quad\text{in }\Omega_t\qquad(t\geqslant 0).$$

Notice at this point that the domains, and thus also the potential function, depend on $t$ only through $\varepsilon(t)$. Consequently, from now on, we shall return to the notation of the previous section and we will denote by $\Omega_\varepsilon$, $ S_\varepsilon$, $ \Gamma_\varepsilon$ and 
$\varphi(\varepsilon,\cdot)$ respectively $\Omega_t$, $ S_t$, $\Gamma_t$ and $\varphi(t,\cdot)$.

The dynamics governing the motion of the solid can  now be   derived  easily from the conservation of energy of the frictionless fluid/solid system. We denote by $m_s$ the mass of the solid and by $\varrho_f$ 
the density of the fluid. Recall that the solid is assumed to be neutrally buoyant, so the 
total energy of the system reduces to the kinetic energy which reads merely
$$E(\varepsilon,\varepsilon'):=\frac{1}{2} (m_s+m_f(\varepsilon))|\varepsilon'|^2,$$
where 
\begin{equation}
\label{added_mass}
m_f(\varepsilon):=\varrho_f\int_{\Omega_\varepsilon}|\nabla\varphi(\varepsilon,\xi)|^2\,{\rm d}\xi,
\end{equation}
is the so-called added mass of the solid. Denoting by $\varepsilon_0'<0$ the initial value of  $\varepsilon'(t)$ (the initial velocity being $(0,\varepsilon'_0$)), the identity $E(\varepsilon(t),\varepsilon'(t))=E(\varepsilon^\ast,\varepsilon_0')$ for every $t>0$ leads to the following first order autonomous Cauchy problem for $\varepsilon$:
\begin{subequations}
\label{cauchy:pb}
\begin{alignat}{3}
\varepsilon'(t)&=\varepsilon_0'\sqrt{\frac{m_s+m_f(\varepsilon^\ast)}{m_s+m_f(\varepsilon(t))}},&\quad& t>0\\
\varepsilon(t)|_{t=0}&=\varepsilon^\ast>0.
\end{alignat}
\end{subequations}
It is proved in \cite{Chambrion:2012aa} in a more general context that the function 
$$\varepsilon\in ]0,\varepsilon^*]\mapsto m_f(\varepsilon)\in\mathbb R^+$$ 
is analytic, so there is no regularity issue as long as $\varepsilon(t)>0$. Actually, classical results for ODE ensure that the solution exists as long as $\varepsilon(t)>0$ (i.e. as long as the solid does not touch the boundary of the cavity). Considering the Cauchy problem \eqref{cauchy:pb}, it is clear that the asymptotic behavior 
of the solid when getting closer to the cavity's bottom relies on the asymptotic behavior of $m_f(\varepsilon)$ as $\varepsilon\to 0^+$. The following cases can occur:
\begin{enumerate}
\item The added mass $m_f(\varepsilon)$ is uniformly bounded for every $\varepsilon\geqslant 0$. It entails that $\varepsilon'$ is bounded from above by a negative constant and hence the solid 
will collide with the cavity's boundary in finite time with nonzero velocity (real shock case);
\item The added mass $m_f(\varepsilon)$ goes to $+\infty$ as $\varepsilon$ goes to 0. Depending on the strength of the blow up, two sub-cases are to be considered:
\begin{enumerate}
\item The solid reaches the boundary of the cavity in finite time with zero velocity  (``smooth landing'' case, no shock);
\item The solution to the Cauchy problem \eqref{cauchy:pb} exists for every time $t\geqslant 0$. In this case $\varepsilon(t)\to 0$ as $t\to+\infty$ (infinite time touchdown case);
\end{enumerate}
\end{enumerate}
The study of collisions between rigid solids was first addressed, to our knowledge, in \cite{Vazquez:2006aa} where the authors prove the lack of collision for a 1D model in which 
the fluid motion is governed by Burgers' equations and the solids are reduced to material points. This result has been generalized, but still for viscous fluid driven by the Navier-Stokes equations, in 2D and 3D in \cite{Hillairet:2007aa} and \cite{Hillairet:2009aa}. These studies assert that ``frontal collisions''  can not occur in a viscous fluid, contrarily to what happens in a perfect fluid. Indeed, in \cite{Houot:2008aa} the authors 
prove for a 2D model that a ball immersed in a perfect fluid can hit a wall with non zero velocity in finite time. 

In the present paper, we aim to extend this result to more general two dimensional configurations.
\subsection{Back to the model problem: a singularly perturbed boundary value problem}

As already mentioned above, we will restrict our analysis to symmetric configurations (geometry, sources). For the sake of simplicity, we will use the same notation to denote the full domains $\mathsf{C}$, $S_\varepsilon$ and $ \Omega_\varepsilon$ and their intersections with the half-plane $\{\xi_1<0\}$. In addition to $\Gamma_\varepsilon:=\partial S_\varepsilon$ and $\partial {\mathsf{C}}$, the boundary $\partial\Omega_\varepsilon$ is hence from now on composed of $\Gamma_\varepsilon^b:=\{(0,\xi_2)\,:\,0<\xi_2<\varepsilon\}$ and 
$\Gamma_\varepsilon^t:=\partial\Omega_\varepsilon\setminus(\partial {\mathsf{C}}\cup \Gamma_\varepsilon\cup \Gamma_\varepsilon^b)$ (see Figure~\ref{fig1_2}). 
\begin{figure}[h]
\centering
\input{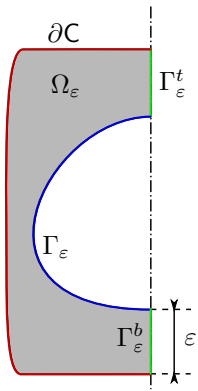}
\caption{\label{fig1_2} The new domain $\Omega_\varepsilon$ ($\varepsilon\geqslant 0$). }
\end{figure}
The analysis of Problem~\ref{Sys:potentiel} (in the symmetric case considered here)  leads to solving the following problem set in the half cavity:
\begin{subequations}
\label{Sys:intro_2}
\begin{alignat}{3}
-\Delta  U_\varepsilon&=0&\quad&\text{in }\Omega_\varepsilon\\
\partial_n U_\varepsilon&=n\cdot e_2&&\text{on }\Gamma_\varepsilon\\
\partial_n U_\varepsilon&=0&&\text{on }\partial\Omega_\varepsilon\setminus \Gamma_\varepsilon,
\end{alignat}
\end{subequations}
which is nothing but a particular case of System~\ref{Sys:intro}, specifying $F_\varepsilon=0$ and $G_\varepsilon=n\cdot e_2$.

As already mentioned concerning the general system~\ref{Sys:intro}, the main objective of this paper is to study the convergence of $U_\varepsilon$ solution to System \eqref{Sys:intro_2}  and obtain the first order term of the asymptotics of the Dirichlet energy associated to $U_{\varepsilon}$, namely the quantity
\begin{equation}
\label{eq:Eepsilon}
E_{\varepsilon}:=\int_{\Omega_\varepsilon}|\nabla U_{\varepsilon}(\xi)|^2\,{\rm d}\xi.
\end{equation}
Notice that, up to a multiplicative constant, this quantity coincides with the added mass defined in \eqref{added_mass}.

Deriving the asymptotics of the Dirichlet energy \eqref{eq:Eepsilon} requires to solve two main difficulties: 
\begin{enumerate}
\item The solution $U_{\varepsilon}$ for $\varepsilon>0$ and the solution $U_{0}$ for $\varepsilon=0$ (if it exists) are not defined on the same domains (respectively $\Omega_{\varepsilon}$ and $\Omega_{0}$) and thus, they can not be ``compared'' in a simple way.
\item The domain $\Omega_{0}$ is strongly singular due to the presence of a cusp at the contact point. 

\end{enumerate}
Let us now formally explain our main ideas to overcome these two difficulties. The key ingredient we use is a suitable change of coordinates $x=\Psi_\varepsilon(\xi)$ defined for every $\varepsilon\geqslant 0$ (i.e. including the limit case) such that, 
denoting $\omega_{\varepsilon}:= \Psi_\varepsilon(\Omega_{\varepsilon})$, we have:
$$
\omega_\varepsilon=D\cup R_{\varepsilon}\text{ with }D\cap R_{\varepsilon}=\varnothing
$$
and where (see Figure~\ref{fig:chg_of_vari})
\begin{itemize}
\item $D$ is a fixed domain (i.e. independent of $\varepsilon\geqslant 0$);
\item $R_{\varepsilon}$ stands for the rectangle $]0,\ell_{\varepsilon}[\times ]0,1[$, where $\ell_{\varepsilon}\nearrow \ell_{0}:=+\infty$ as $\varepsilon$ goes to 0.
\end{itemize}
Denoting by $n$ the unit outer normal to $\partial\omega_\varepsilon$  and setting $\tau$ the tangent vector to $\partial\omega_\varepsilon$ such that $\tau^\perp=n$ and 
$$
u_{\varepsilon}:= U_{\varepsilon}(\Psi_{\varepsilon}^{-1}),  \qquad f_\varepsilon:=F_\varepsilon(\Psi_\varepsilon^{-1})|\det D\Psi_\varepsilon^{-1}|,\qquad
g_{\varepsilon}:=G_\varepsilon(\Psi_{\varepsilon}^{-1})|D\Psi_{\varepsilon}^{-1}\tau|, \qquad \gamma_{\varepsilon}:= \Psi_{\varepsilon}(\Gamma_{\varepsilon}),
$$
we will show that the general problem \eqref{Sys:intro} is transformed into a new boundary value problem set in $\omega_\varepsilon$:
\begin{subequations}
\label{eq:uepsilon}
\begin{alignat}{3}
-\dv(\mathbb A_{\varepsilon}\nabla u_\varepsilon)&=f_\varepsilon&\quad&\text{in }\omega_{\varepsilon}\\
\mathbb A_\varepsilon \nabla u_{\varepsilon}\cdot n&=g_{\varepsilon}&&\text{on }\gamma_{\varepsilon}\\
\mathbb A_\varepsilon \nabla u_{\varepsilon}\cdot n&=0&&\text{on }\partial \omega_{\varepsilon}\setminus\gamma_{\varepsilon}.
\end{alignat}
\end{subequations}
Notice that the compatibility condition \eqref{compt_encore} for the functions $F_\varepsilon$ and $G_\varepsilon$ yields:
$$
\int_{\omega_\varepsilon}f_\varepsilon\,{\rm d}x+\int_{\gamma_\varepsilon} g_\varepsilon\,{\rm d}s=0.$$
Here, $\mathbb A_{\varepsilon}$ denotes the 2 by 2 matrix with continuous coefficients defined by 
\begin{equation}
\label{eq:defAepsilon}
\mathbb A_\varepsilon:=\Big[(D\Psi_\varepsilon)\circ\Psi_\varepsilon^{-1}\Big]\Big[(D\Psi_\varepsilon)\circ\Psi_\varepsilon^{-1}\Big]^{\mathbf{T}}|\det(D\Psi_\varepsilon^{-1})|.
\end{equation}
The Dirichlet energy $E_{\varepsilon}$ defined by \eqref{eq:Eepsilon} takes the form
$$
E_{\varepsilon} =  \int_{\omega_\varepsilon}\mathbb A_{\varepsilon}\nabla u_{\varepsilon}\cdot \nabla u_{\varepsilon} \,{\rm d}x.
$$

%
%

Regarding the boundary value problem \eqref{eq:uepsilon}, we note the following 
\begin{enumerate}
\item In the new system of coordinates $x=(x_{1},x_{2})$, comparing the solution $u_{\varepsilon}$ and the solution $u_0$ is now possible since $(\omega_{\varepsilon})_{\varepsilon\geqslant 0}$ is an increasing sequence of domains, all of them
included in the (unbounded) domain $\omega_0$. In the new system of coordinates, the cusp singularity is sent to infinity.
\item The operator involved is not anymore the Laplace operator but the second order  operator $-\dv(\mathbb A_\varepsilon\nabla\cdot)$
(which depends on $\varepsilon\geqslant 0$). However, as we will see later, this operator is uniformly elliptic with respect to $\varepsilon\geqslant 0$ and $x\in\omega_\varepsilon$. 
\end{enumerate}


Motivated by the physical problem \eqref{Sys:potentiel}, we will mainly focus on system \eqref{eq:uepsilon}, obtained from System~\ref{Sys:intro_2} after applying the change of variables, i.e. for a volume source term $f_\varepsilon=0$ and boundary data $g_\varepsilon=D\Psi^{-1}_\varepsilon\tau\cdot e_1$.
Recalling that the cusp is locally described by the equation $\xi_{2} = H_0(\xi_{1})= \kappa\, |\xi_{1}|^{1+\alpha}$ (with $\kappa, \alpha>0$), our main result can  be stated as follows:
\begin{theorem}
\label{theo:main_theorem}
For every $\varepsilon>0$, let $u_\varepsilon$ be a solution to
\begin{subequations}
\label{eq:uepsilon_2}
\begin{alignat}{3}
-\dv(\mathbb A_{\varepsilon}\nabla u_\varepsilon)&=0&\quad&\text{in }\omega_{\varepsilon}\\
\mathbb A_\varepsilon \nabla u_{\varepsilon}\cdot n&=D\Psi^{-1}_\varepsilon\tau\cdot e_1&&\text{on }\gamma_{\varepsilon}\label{eq:ug}\\
\mathbb A_\varepsilon \nabla u_{\varepsilon}\cdot n&=0&&\text{on }\partial \omega_{\varepsilon}\setminus\gamma_{\varepsilon}.
\end{alignat}
\end{subequations}
Then the 
following alternative holds true:
\begin{enumerate}
\item \label{theo:case_1}For $\alpha<2$: System~\ref{eq:uepsilon_2} with $\varepsilon=0$ admits a finite energy solution $u_0$. Moreover, 
$$\|\nabla u_{\varepsilon}-\nabla u_0\|_{L^{2}(\omega_{\varepsilon})}\to 0\text{ and }E_{\varepsilon}\to E_{0}<\infty\text{ as }\varepsilon\to 0^+.$$ 

\item  \label{theo:case_2}For $\alpha\geqslant 2$: System~\ref{eq:uepsilon_2} with $\varepsilon=0$ has no finite energy solution and two kinds of blow up are possible as $\varepsilon \to 0^+$ for the Dirichlet energy $E_\varepsilon$:
\begin{equation}
\label{equiv_energy}
E_{\varepsilon}\underset{\varepsilon=0}{\sim}
\begin{cases} 
\frac{1}{3}\kappa^{-1}|\ln(\varepsilon)|&\mbox{if } \alpha=2 \\
\displaystyle\frac{1}{3}\varepsilon^{\frac{3}{1+\alpha}-1}\kappa^{-\frac{3}{1+\alpha}}\frac{3\pi/(1+\alpha)}{\sin(3\pi/(1+\alpha))} & \mbox{if } \alpha>2. 
\end{cases}
\end{equation}
\end{enumerate}
\end{theorem}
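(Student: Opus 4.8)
The plan is to show that the whole asymptotics is carried by the thin channel, i.e. by the rectangle $R_\varepsilon=\;]0,\ell_\varepsilon[\times]0,1[$, and that on $R_\varepsilon$ the problem reduces, to leading order, to a one–dimensional (lubrication) problem governed by the conservation of the longitudinal flux. Concretely, integrating $\dv(\mathbb A_\varepsilon\nabla u_\varepsilon)=0$ over $R_\varepsilon\cap\{x_1>s\}$ and using the homogeneous Neumann conditions on the bottom $\{x_2=0\}$ and on the far end $\{x_1=\ell_\varepsilon\}$, the horizontal flux
\[
Q_\varepsilon(s):=\int_0^1\big(\mathbb A_\varepsilon\nabla u_\varepsilon\cdot e_1\big)(s,x_2)\,{\rm d}x_2
\]
is \emph{prescribed by the data}, $Q_\varepsilon(s)=\int_s^{\ell_\varepsilon}g_\varepsilon\,{\rm d}x_1$, and (after pulling back through $\Psi_\varepsilon$, where $g_\varepsilon=H_\varepsilon$ on the top) equals $|\xi_1|$ at the cross–section corresponding to $x_1=s$. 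I would then aim to prove
\[
E_\varepsilon=\int_{\omega_\varepsilon}\mathbb A_\varepsilon\nabla u_\varepsilon\cdot\nabla u_\varepsilon\,{\rm d}x\;\underset{\varepsilon\to0}{\sim}\;\int_0^{\ell_\varepsilon}Q_\varepsilon(s)^2\,{\rm d}s=\int_0^{\delta^\ast}\frac{t^2}{\kappa\,t^{1+\alpha}+\varepsilon}\,{\rm d}t=:I(\varepsilon),
\]
the last equality coming from the change of variable $t=|\xi_1|$, ${\rm d}s={\rm d}t/H_\varepsilon(t)$. The contribution of the fixed part $D$ is $O(1)$ by uniform ellipticity and boundedness of the data, hence negligible whenever $I(\varepsilon)\to\infty$.

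The equivalence $E_\varepsilon\sim I(\varepsilon)$ I would establish by two matching variational bounds. For the \emph{lower bound} I use the exact identity for $Q_\varepsilon(s)$ together with the Cauchy–Schwarz inequality for the positive form $\mathbb A_\varepsilon$, namely $\langle\mathbb A_\varepsilon a,b\rangle^2\le\langle\mathbb A_\varepsilon a,a\rangle\,\langle\mathbb A_\varepsilon b,b\rangle$ with $a=\nabla u_\varepsilon$ and $b=e_1$; this gives
\[
\int_0^1\mathbb A_\varepsilon\nabla u_\varepsilon\cdot\nabla u_\varepsilon\,{\rm d}x_2\;\ge\;\frac{Q_\varepsilon(s)^2}{\int_0^1(\mathbb A_\varepsilon)_{11}\,{\rm d}x_2},
\]
and, after integration in $s$ and using that $\int_0^1(\mathbb A_\varepsilon)_{11}\,{\rm d}x_2\to1$ in the dominant region, $E_\varepsilon\ge I(\varepsilon)(1+o(1))$. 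For the \emph{upper bound} I use the complementary energy principle $E_\varepsilon\le\int_{\omega_\varepsilon}\mathbb A_\varepsilon^{-1}\sigma\cdot\sigma\,{\rm d}x$, valid for any $\sigma$ with $\dv\sigma=0$, $\sigma\cdot n=g_\varepsilon$ on $\gamma_\varepsilon$ and $\sigma\cdot n=0$ elsewhere, choosing in $R_\varepsilon$ the explicit divergence–free field $\sigma=(Q_\varepsilon(x_1),\,g_\varepsilon(x_1)x_2)$ suitably connected to an admissible field on $D$. Since $\det\mathbb A_\varepsilon\equiv1$ (a general two–dimensional identity for this construction) and the off–diagonal entries $x_2H_\varepsilon'$ vanish like $H_\varepsilon'=\kappa(1+\alpha)|\xi_1|^{\alpha}\to0$ near the tip, one gets $\int_0^1\mathbb A_\varepsilon^{-1}\sigma\cdot\sigma\,{\rm d}x_2=Q_\varepsilon^2\big(1+O(H_\varepsilon'^{\,2})\big)+(\text{lower order})$, whence $E_\varepsilon\le I(\varepsilon)(1+o(1))$.

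It then remains to evaluate $I(\varepsilon)$. For $\alpha=2$ the substitution $w=\kappa t^3+\varepsilon$ gives $I(\varepsilon)=\tfrac1{3\kappa}\big[\ln(\kappa\delta^{\ast3}+\varepsilon)-\ln\varepsilon\big]\sim\tfrac13\kappa^{-1}|\ln\varepsilon|$. For $\alpha>2$ the scaling $t=(\varepsilon/\kappa)^{1/(1+\alpha)}s$ concentrates the integral at $t\sim(\varepsilon/\kappa)^{1/(1+\alpha)}$ and yields
\[
I(\varepsilon)\sim\varepsilon^{\frac{3}{1+\alpha}-1}\kappa^{-\frac{3}{1+\alpha}}\int_0^{+\infty}\frac{s^2}{1+s^{1+\alpha}}\,{\rm d}s,\qquad\int_0^{+\infty}\frac{s^2}{1+s^{1+\alpha}}\,{\rm d}s=\frac{\pi/(1+\alpha)}{\sin\!\big(3\pi/(1+\alpha)\big)},
\]
the last being the Mellin integral $\int_0^\infty w^{\beta-1}/(1+w)\,{\rm d}w=\pi/\sin(\beta\pi)$ with $\beta=3/(1+\alpha)\in(0,1)$; this is exactly \eqref{equiv_energy} after writing $\pi/(1+\alpha)=\tfrac13\cdot 3\pi/(1+\alpha)$. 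Finally, for $\alpha<2$ one has $I(0)=\kappa^{-1}\delta^{\ast\,2-\alpha}/(2-\alpha)<\infty$; I would show that the limit problem \eqref{eq:uepsilon_2} with $\varepsilon=0$ on the unbounded domain $\omega_0$ is well posed in the Beppo–Levi space $\{v:\nabla v\in L^2(\omega_0)\}/\mathbb R$ precisely when this integral converges, and then deduce $E_\varepsilon\to E_0$ and $\|\nabla u_\varepsilon-\nabla u_0\|_{L^2(\omega_\varepsilon)}\to0$ from the monotonicity $\omega_\varepsilon\nearrow\omega_0$, the uniform ellipticity of $\mathbb A_\varepsilon$ and the convergence of the energies (a Mosco/$\Gamma$–convergence argument giving strong convergence of the minimizing gradients).

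The main obstacle is to turn the formal lubrication picture into rigorous, $\varepsilon$–uniform estimates: one must prove that the corrections coming from the non–constant, non–diagonal entries of $\mathbb A_\varepsilon$, from the junction between $R_\varepsilon$ and the fixed domain $D$ at $x_1=0$, and from the far end $x_1=\ell_\varepsilon$, are all $o(I(\varepsilon))$, so that the lower and upper bounds share the same leading constant. The favorable facts that make this tractable are $\det\mathbb A_\varepsilon\equiv1$, the decay $H_\varepsilon'\to0$ at the tip (which confines $x_2H_\varepsilon'$ to small values exactly where the energy concentrates, at the scale $t\sim(\varepsilon/\kappa)^{1/(1+\alpha)}$), and the exactness of the flux identity for $Q_\varepsilon$. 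For $\alpha<2$ the secondary difficulty is the coercivity/Poincaré inequality on the unbounded strip needed both for the well–posedness of the limit problem and for the strong convergence of the gradients.
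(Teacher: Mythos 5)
Your route is genuinely different from the paper's. The paper constructs an explicit ansatz $u_\varepsilon^s$ by a multiscale expansion (Theorem~\ref{THEO:bad_case}), shows that the remainder $u_\varepsilon^r=u_\varepsilon-u_\varepsilon^s$ has well-prepared data in the weighted spaces, and then invokes an abstract convergence result (Theorem~\ref{THEO:convergence}) built on a uniform trace theorem, a uniform Poincar\'e--Wirtinger inequality and an extension operator; the energy asymptotics \eqref{equiv_energy} is then read off from the explicit energy of the ansatz (Lemma~\ref{LEM:w_var}). You instead sandwich $E_\varepsilon$ between two variational bounds driven by the exact flux identity $Q_\varepsilon(s)=|\mu_\varepsilon(s)|$: the Cauchy--Schwarz lower bound is clean (indeed $(\mathbb A_\varepsilon)_{11}\equiv 1$, so no $o(1)$ correction is even needed there), and the dual (Thompson) upper bound with $\sigma=(Q_\varepsilon,\,g_\varepsilon x_2)$ is admissible since $\dv\sigma=0$ and $\det\mathbb A_\varepsilon\equiv1$; the error terms carry a factor $H_0'(\mu_\varepsilon)^2=O(|\mu_\varepsilon|^{2\alpha})$, which is small exactly where the energy concentrates. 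Your integral $\int_0^{\ell_\varepsilon}Q_\varepsilon^2\,{\rm d}s=\int \xi_1^2 H_\varepsilon(\xi_1)^{-1}{\rm d}\xi_1$ is precisely the integral the paper evaluates in the proof of Lemma~\ref{LEM:w_var}, and your Mellin computation reproduces the constants of \eqref{equiv_energy} correctly. What your approach buys is a shorter, self-contained derivation of the leading-order energy for $\alpha\geqslant 2$ without constructing the corrector; what it gives up is the finer information the paper extracts, namely the decomposition $u_\varepsilon\approx u_\varepsilon^s+u_0^r$ and the gradient convergence of the regular part.

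Two points are genuinely missing. First, for $\alpha\geqslant 2$ the theorem asserts that the limit problem has \emph{no} finite energy solution; blow-up of $E_\varepsilon$ along the family does not imply this, and the paper proves it separately by a test-function argument (Proposition~\ref{naza}, with $V_k\sim 2^{k\alpha/2}\chi(2^k\xi_1)$); you would need to supply that or an equivalent. Second, for $\alpha<2$ your criterion ``well posed precisely when $I(0)<\infty$'' is not a proof: well-posedness on the unbounded strip requires the boundary functional $v\mapsto\int_{\gamma_0}g_0 v$ to be continuous on the quotient energy space, i.e.\ a weighted trace/Hardy inequality, and the convergence $\|\nabla u_\varepsilon-\nabla u_0\|_{L^2(\omega_\varepsilon)}\to0$ requires $\varepsilon$-uniform Poincar\'e and extension estimates. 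That the threshold of your integral coincides with the threshold $g_0\in L^2(\gamma_0,{\rm d}\sigma^{-1})$ is a happy coincidence of exponents, not an argument; to close this case you would end up rebuilding essentially the functional framework of Section~\ref{SEC:function} (Lemmas~\ref{LEM:extension}, \ref{lem:trace}, \ref{lem:poincare}), which you correctly flag as the remaining difficulty but do not carry out.
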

Let us emphasize that the behavior of the Dirichlet energy only depends on the nature of the cusp (i.e. the constants $\kappa$ and $\alpha$) and not on other geometric features of the fluid domain.

This result follows immediately from the gathering of Theorem~\ref{THEO:nice_case} and Theorem~\ref{THEO:bad_case} below.

Regarding the collision problem, we claim:
 \begin{cor}
\label{cor_intro}
In case~\ref{theo:case_1} of the Theorem ($\alpha<2$), the solid will collide with the cavity's boundary in finite time with non zero velocity (real shock case).
In case~\ref{theo:case_2} ($\alpha\geqslant 2$), the solid reaches the cavity's bottom in finite time but with null velocity (smooth landing case).
\end{cor}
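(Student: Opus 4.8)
The plan is to read off the collision dynamics directly from the first order autonomous equation \eqref{cauchy:pb}, feeding in the asymptotics of the added mass supplied by Theorem~\ref{theo:main_theorem}. Since the added mass coincides with the Dirichlet energy up to a fixed positive multiplicative constant (see \eqref{added_mass} and the remark following \eqref{eq:Eepsilon}), i.e.\ $m_f(\varepsilon)=c\,E_\varepsilon$ with $c>0$, the behavior of $m_f$ near $\varepsilon=0$ is entirely dictated by the alternative \ref{theo:case_1}--\ref{theo:case_2}. As \eqref{cauchy:pb} is separable, I would set $K:=\varepsilon_0'\sqrt{m_s+m_f(\varepsilon^\ast)}<0$, rewrite the equation as $\sqrt{m_s+m_f(\varepsilon(t))}\,\varepsilon'(t)=K$, and integrate from the initial time to the touchdown time (using the change of variable $s=\varepsilon(t)$). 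This yields the explicit expression
$$T=\frac{1}{|K|}\int_0^{\varepsilon^\ast}\sqrt{m_s+m_f(s)}\,{\rm d}s$$
for the \emph{a priori} possibly infinite instant at which $\varepsilon(t)\to 0$. The whole corollary then reduces to deciding whether this integral converges and, when it does, to reading off the limiting velocity $\varepsilon'(T)=\varepsilon_0'\sqrt{(m_s+m_f(\varepsilon^\ast))/(m_s+m_f(0))}$.

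In case~\ref{theo:case_1} ($\alpha<2$), Theorem~\ref{theo:main_theorem} gives $E_\varepsilon\to E_0<\infty$, so that $m_f$ is bounded on $]0,\varepsilon^\ast]$ and extends by continuity to $\varepsilon=0$ with $m_f(0)=c\,E_0<\infty$. The integrand above is then bounded, whence $T<\infty$ and the solid touches the bottom in finite time; moreover the limiting velocity $\varepsilon_0'\sqrt{(m_s+m_f(\varepsilon^\ast))/(m_s+m_f(0))}$ is strictly negative, which is precisely the real shock.

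In case~\ref{theo:case_2} ($\alpha\geqslant 2$), the blow up $m_f(\varepsilon)\to+\infty$ forces $\varepsilon'(t)=\varepsilon_0'\sqrt{(m_s+m_f(\varepsilon^\ast))/(m_s+m_f(\varepsilon))}\to 0$ along the trajectory, so that a collision occurring in finite time must necessarily take place with null velocity. It therefore only remains to exclude the infinite time touchdown scenario, i.e.\ to prove that $T<\infty$ in spite of the unbounded added mass. For this I would insert the equivalents \eqref{equiv_energy} into the integrand $\sqrt{m_s+m_f(s)}$: for $\alpha=2$ it behaves like a constant times $\sqrt{|\ln s|}$, which is integrable near $0$; for $\alpha>2$ it behaves like a constant times $s^{-\beta/2}$ with $\beta:=1-\tfrac{3}{1+\alpha}$, and the constraint $\alpha>2$ gives $0<\beta<1$, hence $\beta/2<1$ and $s^{-\beta/2}$ is integrable near $0$ as well. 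In both sub-cases $T<\infty$, which establishes the smooth landing and, at the same time, rules out the infinite time touchdown sub-case mentioned in the introduction.

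The main obstacle is exactly this last integrability check in case~\ref{theo:case_2}: the delicate point is not that the added mass diverges, but that its divergence is too mild to prevent touchdown. Quantitatively, one must verify that the singularity exponent $\beta/2$ of $\sqrt{m_f}$ stays below the critical threshold $1$ for every $\alpha>2$ (and that the logarithmic growth at $\alpha=2$ remains integrable). This is precisely where the sharp constants and exponents furnished by \eqref{equiv_energy}, rather than a mere qualitative blow up, are genuinely required.
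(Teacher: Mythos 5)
Your proof is correct, and it follows the same basic strategy as the paper: read the collision dynamics off the autonomous ODE \eqref{cauchy:pb} using the asymptotics of the added mass supplied by Theorem~\ref{theo:main_theorem}. The only real difference is technical. For the finite-time conclusion in the case $\alpha\geqslant 2$, the paper proves Lemma~\ref{lem_intro}: from $E_\varepsilon=O(\varepsilon^{-\beta})$ with $\beta<2$ it derives the pointwise lower bound $F(\varepsilon,\varepsilon^\ast)\geqslant C\varepsilon^{\beta/2}$, turns the ODE into the differential inequality $\varepsilon'(t)\leqslant \varepsilon_0'C\varepsilon(t)^{\beta/2}$, and integrates it to the explicit supersolution $\varepsilon(t)\leqslant\bigl[C\varepsilon_0't+(\varepsilon^\dagger)^{1-\beta/2}\bigr]^{2/(2-\beta)}$, which vanishes in finite time. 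You instead separate variables exactly and express the touchdown time as $T=\frac{1}{|K|}\int_0^{\varepsilon^\ast}\sqrt{m_s+m_f(s)}\,{\rm d}s$, reducing everything to an integrability check near $s=0$. The two arguments are equivalent --- your integrability threshold $\beta/2<1$ is exactly the paper's hypothesis $\beta<2$ --- but your formulation has the advantage of making the critical exponent transparent and of yielding the touchdown time in closed form; the paper's version buys a reusable lemma stated purely in terms of an $O(\varepsilon^{-\beta})$ bound on the energy, which it then also invokes for the flat-bottom case where $E_\varepsilon\sim C\varepsilon^{-1}$. Your treatment of the sub-critical case ($m_f$ bounded, hence $|\varepsilon'|$ bounded below and the limit velocity nonzero) matches Corollary~\ref{cor_shock} exactly.
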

This corollary is a restatement of Corollary~\ref{cor_shock} for the case $\alpha<2$. The case $\alpha\geqslant 2$ results from the following lemma (whose proof is postponed to Appendix~\ref{SEC:technic}) and the estimates \eqref{equiv_energy}.
\begin{lemma}
\label{lem_intro}
Assume that $E_\varepsilon\to +\infty$ as $\varepsilon\to 0^+$ and that there exists $\beta<2$ such that $E_\varepsilon=O(\varepsilon^{-\beta})$, then 
the solid reaches the cavity's bottom in finite time but with null velocity (smooth landing case).
\end{lemma}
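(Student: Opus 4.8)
The plan is to reduce the statement to an elementary study of the separable, autonomous Cauchy problem \eqref{cauchy:pb}. I would write $M(\varepsilon):=m_s+m_f(\varepsilon)$ and recall that, by \eqref{added_mass} and the remark following \eqref{eq:Eepsilon}, the added mass $m_f$ is proportional to $E_\varepsilon$. Hence the two standing hypotheses translate into $M(\varepsilon)\to+\infty$ and $M(\varepsilon)=O(\varepsilon^{-\beta})$ with $0<\beta<2$ as $\varepsilon\to0^+$. Setting $K:=\varepsilon_0'\sqrt{M(\varepsilon^\ast)}<0$, equation \eqref{cauchy:pb} reads $\varepsilon'(t)=K/\sqrt{M(\varepsilon(t))}$. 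Since $M$ is analytic and positive on $]0,\varepsilon^\ast]$, the right-hand side is locally Lipschitz there, so the solution is unique and, being driven by a negative velocity, strictly decreasing; it extends up to a maximal time $T^\ast\in]0,+\infty]$.

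Next I would separate variables. Multiplying by $\sqrt{M(\varepsilon)}$, integrating between $0$ and $t$, and performing the change of variable $s=\varepsilon(\sigma)$ yields, for every $t\in[0,T^\ast)$,
\[
\int_{\varepsilon(t)}^{\varepsilon^\ast}\sqrt{M(s)}\,{\rm d}s=|K|\,t.
\]
As $\varepsilon(t)$ is decreasing and bounded below by $0$, it converges to some limit $\varepsilon_\infty\geqslant 0$ as $t\to T^\ast$. A standard maximality argument forces $\varepsilon_\infty=0$: if $\varepsilon_\infty>0$ then $M$ stays bounded near $\varepsilon_\infty$, the velocity stays away from zero, and the flow could either be continued past a finite $T^\ast$ or, if $T^\ast=+\infty$, would make the right-hand side above diverge while the left-hand side stays bounded — a contradiction in both cases. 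Letting $t\to T^\ast$ and invoking monotone convergence for the nonnegative integrand then gives the arrival-time identity $|K|\,T^\ast=\int_0^{\varepsilon^\ast}\sqrt{M(s)}\,{\rm d}s$. Thus the solid reaches the bottom in finite time if and only if this improper integral converges.

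It remains to feed in the two hypotheses, which play complementary roles. The growth bound $M(s)=O(s^{-\beta})$ gives $\sqrt{M(s)}=O(s^{-\beta/2})$ near $0$, and since $\beta<2$ we have $\beta/2<1$, so $\int_0^{\varepsilon^\ast}\sqrt{M(s)}\,{\rm d}s<\infty$; hence $T^\ast<\infty$ and touchdown occurs in finite time. For the velocity at contact, I would evaluate $\varepsilon'(t)=K/\sqrt{M(\varepsilon(t))}$ as $t\to T^\ast$: since $\varepsilon(t)\to0$ and $M(\varepsilon(t))\to+\infty$ — this is exactly where the blow-up hypothesis $E_\varepsilon\to+\infty$ is used — we obtain $\varepsilon'(t)\to0$, i.e. null velocity, which is the smooth landing case.

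I do not expect a serious obstacle here, since the argument is in essence a convergence test for an improper integral. The only points requiring genuine care are the continuation/maximality argument that rules out $\varepsilon_\infty>0$ and the clean bookkeeping of the proportionality constant between $m_f$ and $E_\varepsilon$, so that the hypotheses stated for $E_\varepsilon$ may be transferred verbatim to $M$. The conceptual content — and the reason the threshold is precisely $\beta=2$ — is the equivalence ``finite collision time $\Longleftrightarrow$ $\sqrt{M}\in L^1$ near $0$'', that is $\beta/2<1$.
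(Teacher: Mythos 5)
Your argument is correct and is essentially the paper's: both reduce the claim to the autonomous Cauchy problem \eqref{cauchy:pb} and use $E_\varepsilon=O(\varepsilon^{-\beta})$ with $\beta<2$ to obtain a finite arrival time, and $E_\varepsilon\to+\infty$ to obtain null velocity at contact. The only difference is presentational — the paper integrates the differential inequality $\varepsilon'(t)\leqslant C\varepsilon_0'\,\varepsilon(t)^{\beta/2}$ to get an explicit power-law upper bound on $\varepsilon(t)$ that vanishes in finite time, whereas you separate variables exactly and test the improper integral $\int_0^{\varepsilon^\ast}\sqrt{M(s)}\,{\rm d}s$ for convergence; the two computations are interchangeable.
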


Several references can be found in the literature regarding the asymptotics of the Dirichlet problem near a tangency point of smooth components of the boundary, see for instance the papers of Maz'ya, Nazarov and Plamenevskij \cite{MazNazPla82,MazNazPla84,MazNazPla86} and their book \cite[Chapter 14]{MazNazPla00b}. The Neumann problem has been investigated more recently. In particular the singular behavior of the limit problem is derived in \cite{Naz94,NazSokTas09} while the full asymptotics with respect to the small parameter $\varepsilon$  is studied in \cite{CarNazSok07,CarNazSok09}. In particular, Cardone, Nazarov and Sokolowski provide in \cite{CarNazSok09} the first order asymptotics for the Neumann problem with thin ligaments in arbitrary dimension. However these papers deal with the case where the tangency exponent $\alpha$ of the cusp is an even integer $2m$.   

In this paper, we propose a new method to obtain the first order approximation of the solution for the two dimensional Neumann Laplacian problem for arbitrary tangency exponent $\alpha>0$. Our method relies on the use of a suitable change of variables leading to the study of Neumann problems set on the domains $\omega_{\varepsilon}$ and on a precise description of the asymptotic behavior at infinity of the solutions $u_{\varepsilon}$ of these problems when $\varepsilon$ tends to 0. Let us point out the main advantages of our approach :
\begin{enumerate}
\item Since $\omega_{\varepsilon}$ defines an increasing sequence, the solutions $u_{\varepsilon}$ and the (potential) limit solution $u_{0}$ can be easily compared on the domain $\omega_{\varepsilon}$ in which they are both defined.
\item The cases $\varepsilon>0$ and the limit case $\varepsilon=0$ can be handled exactly in the same way in the new geometry $\omega_{\varepsilon}$. Indeed, using  an appropriate weighted Sobolev space (with a decaying weight as $x_1\to \infty$), we will deal with uniformly elliptic Neumann problems on the domains $\omega_{\varepsilon}$ for all $\varepsilon\geqslant 0$. 
\item In the new variables, the strength of the cusp (i.e. the parameter $\alpha$) appears only in the operators and the boundary data of the Neumann problems and does not appear in the weight used in the functional spaces. Let us also emphasize that this makes our approach valid for arbitrary $\alpha>0$, integer or not. 
\item  Finally, let us point out that our method can be, in principle, generalized to higher dimensions. The limit problem initially set on $\Omega_{\varepsilon}\subset\mathbb R^d$ is then transformed into a problem set on $\mathbb R_{+}^{d-1}\times(0,1)$.
\end{enumerate}

\subsection{Outline}
The paper is organized as follows. In Section \ref{sect:preliminary} we collect some preliminary but elementary remarks on the asymptotic behavior of the Dirichlet energy as $\varepsilon$ tends to 0. The change of variables near the cusp and its main properties are given in Section \ref{sect:Psi}. For the sake of clarity, its full construction (near and far from the cusp) is described in Appendix~\ref{App:chg_of_vari}. The rest of the paper deals with the analysis of the general boundary problems \eqref{eq:uepsilon} set in $\omega_{\varepsilon}$ and obtained after applying the change of variables to System~\ref{Sys:intro}. In Section \ref{SEC:function}, we describe the functional framework used to study the asymptotic behavior of the solutions $u_{\varepsilon}$ of these problems as $\varepsilon$ goes to 0. We introduce appropriate weighted Sobolev spaces and we prove some useful lemmas (a trace theorem and a Poincar\'e-Wirtinger inequality involving constants which are uniform with respect to $\varepsilon$). In Section \ref{sect:abstract}, we provide a well-posedness result for the problem \eqref{eq:uepsilon} with $\varepsilon=0$, set in the unbounded domain $\omega_{0}$ and a convergence result of $u_{\varepsilon}$ (towards $u_{0}$) in the energy space for well prepared data (i.e. data having a suitable decay rate at infinity). These results are applied in Section \ref{sect:application} to investigate the asymptotic behavior of the Dirichlet energy $E_{\varepsilon}$ for the 
particular system~\ref{eq:uepsilon_2}. We show that for $\alpha<2$ (recall that $\alpha$ is the coefficient describing the strength of the cusp) and $\varepsilon=0$, the Neumann datum in \eqref{eq:ug} is well prepared. This leads to the well-posedness of the limit problem and to a finite limit energy $E_{0}$. On the contrary, for $\alpha\geqslant 2$, the boundary data in \eqref{eq:ug}  does not have the decay rate required to apply the results of Section~\ref{sect:abstract}. In this case, we prove the existence of a singular (non decaying) solution for the  problem \eqref{eq:uepsilon_2} when $\varepsilon=0$ and the blow up of the Dirichlet energy $E_{\varepsilon}$ as $\varepsilon$ tends to $0^+$. In view of the collision issue, the first term of the asymptotics of $E_{\varepsilon}$ is also given. Finally, in Section~\ref{subsect:flat}, we show through some examples how the method can be adapted to deal 
with more general configurations.
\section{Some preliminary remarks on the asymptotic behavior}
\label{sect:preliminary}

In order to get a first intuition about the behavior of the solution $U_{\varepsilon}$ of \eqref{Sys:intro_2} as $\varepsilon$ goes to 0, we collect here some general remarks about the problem and some comparison results obtained thanks to elementary considerations.

First of all, we recall a result proved by Nazarov {\it et al} in \cite[Section 5]{NazSokTas09}, providing a non existence result of finite energy solutions for Problem~\ref{Sys:intro_2}, when $\varepsilon=0$. The proof being short, it is given for the sake of completeness.
\begin{prop} 
\label{naza}
The variational formulation 
\begin{equation}
\label{eq:FVOmega0}
\int_{\Omega_0}\nabla U\cdot \nabla V\,{\rm d}\xi=\int_{\Gamma_0} G_0\,V\,{\rm d}\sigma, \qquad \forall\, V\in H^1(\Omega_0),
\end{equation}
where $G_0:=n\cdot e_2$ has no solution $U\in H^1(\Omega_0)$ if $\alpha\geqslant 2$.
\end{prop}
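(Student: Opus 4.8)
The plan is to argue by contradiction and to show that the presence of the cusp makes the right-hand side of \eqref{eq:FVOmega0} an \emph{unbounded} linear functional with respect to the Dirichlet seminorm, which no $U\in H^1(\Omega_0)$ can represent. Suppose some $U\in H^1(\Omega_0)$ satisfies \eqref{eq:FVOmega0}. Then for every $V\in H^1(\Omega_0)$ the Cauchy--Schwarz inequality gives
\[
\left|\int_{\Gamma_0} G_0\,V\,{\rm d}\sigma\right|=\left|\int_{\Omega_0}\nabla U\cdot\nabla V\,{\rm d}\xi\right|\leqslant \|\nabla U\|_{L^2(\Omega_0)}\,\|\nabla V\|_{L^2(\Omega_0)},
\]
so the functional $L(V):=\int_{\Gamma_0}G_0\,V\,{\rm d}\sigma$ is continuous for the seminorm $\|\nabla\cdot\|_{L^2(\Omega_0)}$, with constant $\|\nabla U\|_{L^2(\Omega_0)}<\infty$. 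I will contradict this with a one-parameter family of test functions concentrating at the cusp.

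I would work in the local description of $\Omega_0$ near the origin. For $-\delta<\xi_1<0$ with $\delta\leqslant\delta^\ast$, the fluid occupies the thin strip $\{0<\xi_2<H_0(\xi_1)\}$ and $\Gamma_0$ is the graph $\xi_2=H_0(\xi_1)$, whose outer unit normal to $\Omega_0$ points upward. On such a graph one has the exact identity $G_0\,{\rm d}\sigma=(n\cdot e_2)\,{\rm d}\sigma={\rm d}\xi_1$, which is precisely what makes the datum $G_0=n\cdot e_2$ convenient. For $\eta\in(0,\delta)$ I set $\phi_\eta(t):=\int_{\max(t,\eta)}^{\delta}s^{-\alpha}\,{\rm d}s$ and define the test function $V_\eta(\xi):=\phi_\eta(-\xi_1)$, extended by $0$ on $\Omega_0\cap\{\xi_1\leqslant-\delta\}$. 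Since $\phi_\eta(\delta)=0$, and $\phi_\eta$ is bounded (it is constant on $(0,\eta)$) and Lipschitz, one checks $V_\eta\in H^1(\Omega_0)$; it is supported near the cusp, so only the cusp part of $\Gamma_0$ contributes to $L(V_\eta)$.

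The computation then reduces to two integrals. Because $V_\eta$ depends on $\xi_1$ only and the vertical thickness of the strip is $H_0(\xi_1)=\kappa|\xi_1|^{1+\alpha}$, with $\phi_\eta'(t)=-t^{-\alpha}$ on $(\eta,\delta)$ and $0$ on $(0,\eta)$,
\[
\|\nabla V_\eta\|_{L^2(\Omega_0)}^2=\int_{-\delta}^0 \phi_\eta'(-\xi_1)^2\,H_0(\xi_1)\,{\rm d}\xi_1=\kappa\int_\eta^\delta t^{1-\alpha}\,{\rm d}t=:\kappa\,B_\eta,
\]
while, using $G_0\,{\rm d}\sigma={\rm d}\xi_1$ and Fubini's theorem,
\[
L(V_\eta)=\int_{-\delta}^0\phi_\eta(-\xi_1)\,{\rm d}\xi_1=\int_0^\delta\phi_\eta(t)\,{\rm d}t=\int_\eta^\delta s^{-\alpha}\Big(\int_0^s{\rm d}t\Big)\,{\rm d}s=\int_\eta^\delta t^{1-\alpha}\,{\rm d}t=B_\eta.
\]
Consequently $L(V_\eta)/\|\nabla V_\eta\|_{L^2(\Omega_0)}=\sqrt{B_\eta/\kappa}$.

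To conclude, $B_\eta=\int_\eta^\delta t^{1-\alpha}\,{\rm d}t$ increases as $\eta\to0^+$ to $\int_0^\delta t^{1-\alpha}\,{\rm d}t$, and this limit is $+\infty$ exactly when $1-\alpha\leqslant-1$, i.e. when $\alpha\geqslant2$ (the borderline $\alpha=2$ yielding the logarithmic divergence $B_\eta\sim|\ln\eta|$). Hence for $\alpha\geqslant2$ the ratio $L(V_\eta)/\|\nabla V_\eta\|_{L^2(\Omega_0)}\to+\infty$, which is incompatible with the continuity bound of the first paragraph unless $\|\nabla U\|_{L^2(\Omega_0)}=+\infty$; this contradicts $U\in H^1(\Omega_0)$ and proves the claim. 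I expect the only delicate points to be the boundary-measure identity $G_0\,{\rm d}\sigma={\rm d}\xi_1$ and the genuine $H^1(\Omega_0)$ membership of $V_\eta$ together with the negligibility of $\Gamma_0$ away from the cusp; the threshold $\alpha=2$ is nothing but the integrability threshold of the weight $t^{1-\alpha}$, so the argument is really the failure of a weighted Hardy-type inequality localized at the cusp.
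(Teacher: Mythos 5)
Your strategy is sound and all the computations \emph{inside the cusp strip} are correct, but there is one genuine defect in the construction of $V_\eta$. As written, $V_\eta(\xi)=\phi_\eta(-\xi_1)$ on all of $\Omega_0\cap\{-\delta<\xi_1<0\}$, and this set is strictly larger than the strip $\{0<\xi_2<H_0(\xi_1)\}$: since $\Omega_0=\mathsf{C}\setminus S_0$, every abscissa $\xi_1\in(-\delta,0)$ also carries the fluid region lying \emph{above} the solid, whose vertical extent is of order $1$ rather than $H_0(\xi_1)$. There your function contributes
\begin{equation*}
\int_\eta^\delta t^{-2\alpha}\,O(1)\,{\rm d}t\;\sim\;\eta^{1-2\alpha}
\end{equation*}
to $\|\nabla V_\eta\|_{L^2(\Omega_0)}^2$, which dominates $\kappa B_\eta$ and makes the ratio $L(V_\eta)/\|\nabla V_\eta\|_{L^2(\Omega_0)}$ tend to $0$ rather than $+\infty$; moreover the upper arc of $\Gamma_0$ over those abscissas then also contributes to $L(V_\eta)$, contrary to your claim that only the cusp arc does. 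The repair is one line: define $V_\eta$ as $\phi_\eta(-\xi_1)$ only on the strip $\{-\delta<\xi_1<0,\ 0<\xi_2<H_0(\xi_1)\}$ and by $0$ on the rest of $\Omega_0$. This is legitimate because the strip meets the rest of $\Omega_0$ only across the segment $\{\xi_1=-\delta,\ 0<\xi_2<H_0(-\delta)\}$, where $\phi_\eta(\delta)=0$ guarantees continuity (the remaining parts of the strip's boundary lie on $\partial\Omega_0$), so the glued function is Lipschitz on the bounded set $\Omega_0$ and hence in $H^1(\Omega_0)$. With that correction, the identities $G_0\,{\rm d}\sigma={\rm d}\xi_1$, $\|\nabla V_\eta\|_{L^2(\Omega_0)}^2=\kappa B_\eta$ and $L(V_\eta)=B_\eta$ are exact and the contradiction for $\alpha\geqslant 2$ goes through.

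Once fixed, your route is genuinely different from the paper's. The paper keeps the test functions \emph{bounded} in $H^1(\Omega_0)$: it takes dyadic bumps $V_k=2^{k\alpha/2}\chi(2^k\xi_1)$ concentrated at scale $2^{-k}$ in the cusp neighborhood, shows $\int_{\Omega_0}\nabla U\cdot\nabla V_k\,{\rm d}\xi\to 0$ by dominated convergence (the supports shrink to the cusp), while $\int_{\Gamma_0}G_0V_k\,{\rm d}\sigma=2^{k(\alpha/2-1)}\int\chi$ does not vanish when $\alpha\geqslant 2$. You instead use a single spread-out Hardy-type family whose Dirichlet norm diverges, but more slowly than the boundary pairing. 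Both arguments establish the same underlying fact, namely that $V\mapsto\int_{\Gamma_0}G_0V\,{\rm d}\sigma$ is unbounded for the Dirichlet seminorm, and the threshold $\alpha=2$ appears identically (integrability of $t^{1-\alpha}$ versus the sign of $\alpha/2-1$). The paper's version is slightly more economical; yours makes the failure of the localized weighted Hardy inequality explicit and treats $\alpha=2$ and $\alpha>2$ uniformly through the divergence of $B_\eta$.
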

\begin{proof}
Using a contradiction argument, let us assume that there exists $U\in H^1(\Omega_0)$ satisfying the variational  formulation \eqref{eq:FVOmega0}. Given a neighborhood of the cusp $\mathcal V_0\subset \Omega_{0}$ and a function $\chi\in C^\infty_0(\mathbb R)$ with support in $]1/2,1[$ such that $\int_{\mathbb R}\chi>0$, define the sequence of test functions
$$
V_{k}(\xi) =
\begin{cases}
0&\text{if }\xi\in \Omega_0\setminus\overline{\mathcal V_0}\\
2^{\frac{k\alpha}{2}} \chi(2^k\xi_1)& \text{if }\xi\in \mathcal V_0.
\end{cases}
$$
It can be easily checked that the sequence $(V_{k})_k$ is bounded in $H^{1}(\Omega_0)$ and using the dominated convergence theorem that
$$
\lim_{k\to \infty}\int_{\Omega_0}\nabla U\cdot \nabla V_{k}\,{\rm d}\xi=0.
$$ 
On the other hand, for the right-hand side of \eqref{eq:FVOmega0}, we note that

\begin{align*}
\displaystyle \int_{\Gamma_0} G_0\,V_{k}\,{\rm d}\sigma &=\displaystyle 2^{\frac{k\alpha}{2}} \int_{\mathbb R} \chi(2^k \xi_1)\,{\rm d}\xi_1\\
&= \displaystyle 2^{k\left(\frac{\alpha}{2}-1\right)} \int_{\mathbb R} \chi(s)\,{\rm d}s,\end{align*}
which tends to $+\infty$ if $\alpha>2$ and to a non zero finite limit if $\alpha=2$, leading to a contradiction.
\end{proof}

The above result suggests that the limit energy $E_{0}$ is infinite for $\alpha \geqslant 2$. 
Using the Dirichlet principle, i.e. the identity 
\begin{equation}
\label{Dirichlet:P}
E_\varepsilon=\max_{U\in H^1(\Omega_\varepsilon)}\left\{\int_{\Gamma_\varepsilon} n_2 U\,{\rm d}\sigma-\frac{1}{2}\int_{\Omega_\varepsilon}|\nabla U|^2\,{\rm d}\xi\right\},
\end{equation}
available for every $\varepsilon>0$, we first prove the following energy blow up for $\alpha>2$:
\begin{prop}
\label{prop:estim_alpha}
For every $\alpha>2$, there exists a constant $C_\alpha>0$ such that the Dirichlet energy \eqref{eq:Eepsilon} satisfies
\begin{subequations}
\label{eq:esqtim_first}
\begin{equation}
\label{estm:blow_up}E_\varepsilon\geqslant C_\alpha \, \varepsilon^{\frac{3}{\alpha+1}-1}\quad\forall\,\varepsilon>0.
\end{equation}
In the case where the solid has locally a flat bottom (i.e. contact would occur along a segment), there exists $C_\infty>0$ such that
\begin{equation}
\label{estm:blow_up_flat}E_\varepsilon\geqslant C_{\infty} \varepsilon^{-1}\quad\forall\,\varepsilon>0.
\end{equation}
\end{subequations}
\end{prop}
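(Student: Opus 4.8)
The plan is to read Proposition~\ref{prop:estim_alpha} as a \emph{lower} bound and to feed the variational characterization \eqref{Dirichlet:P} with a single well-chosen test function. Indeed, since $E_\varepsilon$ is the maximum over $U\in H^1(\Omega_\varepsilon)$ of $\int_{\Gamma_\varepsilon} n_2 U\,{\rm d}\sigma-\tfrac12\int_{\Omega_\varepsilon}|\nabla U|^2\,{\rm d}\xi$, for \emph{any} admissible $U$ one has
\[
E_\varepsilon\geqslant \int_{\Gamma_\varepsilon} n_2 U\,{\rm d}\sigma-\tfrac12\int_{\Omega_\varepsilon}|\nabla U|^2\,{\rm d}\xi,
\]
so it suffices to produce, for each $\varepsilon>0$, a test function whose right-hand side is $\geqslant C_\alpha\,\varepsilon^{\frac{3}{\alpha+1}-1}$. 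First I would record the elementary geometric fact that on the portion of $\Gamma_\varepsilon$ lying above the cusp, parametrized by $\xi_2=H_\varepsilon(\xi_1)$, the outer normal satisfies $n_2\,{\rm d}\sigma={\rm d}\xi_1$. Hence, choosing $U(\xi)=\phi(\xi_1)$ on the cusp neighborhood $\{|\xi_1|<\delta^\ast,\ 0<\xi_2<H_\varepsilon(\xi_1)\}$ and extending it by zero (which is legitimately in $H^1(\Omega_\varepsilon)$ as soon as $\phi$ is compactly supported in $(-\delta^\ast,\delta^\ast)$), the two terms collapse, after integrating $\xi_2$ over $(0,H_\varepsilon)$, to the one-dimensional quantities $\int\phi\,{\rm d}\xi_1$ and $\int(\phi')^2H_\varepsilon\,{\rm d}\xi_1$. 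Only the cusp part of $\Gamma_\varepsilon$ contributes, thanks to the compact support of $\phi$.

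The next step is a scaling matched to the cusp. I would set $\lambda:=(\varepsilon/\kappa)^{1/(1+\alpha)}$, the horizontal length at which $\kappa|\xi_1|^{1+\alpha}$ balances $\varepsilon$, and take $\phi(\xi_1)=A\,\psi(\xi_1/\lambda)$ for a fixed profile $\psi\in C^\infty_0(-1,1)$ with $\int\psi>0$ and an amplitude $A>0$ to be optimized. For $\varepsilon$ small enough that $\lambda\leqslant\delta^\ast$, the substitution $s=\xi_1/\lambda$ together with $H_\varepsilon(\lambda s)=\varepsilon(1+|s|^{1+\alpha})$ turns the boundary term into $A\lambda I_1$ and the Dirichlet term into $A^2\varepsilon\lambda^{-1}I_2$, where $I_1=\int\psi$ and $I_2=\int\psi'(s)^2(1+|s|^{1+\alpha})\,{\rm d}s$ are finite constants independent of $\varepsilon$. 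Maximizing the concave function $A\mapsto A\lambda I_1-\tfrac12 A^2\varepsilon\lambda^{-1}I_2$ gives the optimal value $\tfrac12\,\lambda^3 I_1^2/(\varepsilon I_2)$; since $\lambda^3=(\varepsilon/\kappa)^{3/(1+\alpha)}$, this is exactly a positive constant times $\kappa^{-3/(1+\alpha)}\varepsilon^{\frac{3}{1+\alpha}-1}$, which is \eqref{estm:blow_up}. (It is reassuring that the resulting constant has the same structure as the sharp one in \eqref{equiv_energy}.)

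For the flat-bottom variant I would run the identical argument in the degenerate limit ``$\alpha=\infty$'', i.e. with $H_\varepsilon\equiv\varepsilon$ on a fixed interval $(-\delta^\ast,\delta^\ast)$. No horizontal rescaling is needed: a fixed profile $\phi=A\,\psi(\xi_1/\delta^\ast)$ produces a boundary term proportional to $A$ and a Dirichlet term proportional to $A^2\varepsilon$, and optimizing over $A$ yields a lower bound of order $\varepsilon^{-1}$, which is \eqref{estm:blow_up_flat}.

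The computations are routine; the only points needing care are uniformity in $\varepsilon$ and admissibility. Since $\tfrac{3}{\alpha+1}-1<0$ for $\alpha>2$, the estimate is only constraining as $\varepsilon\to 0^+$, precisely the regime where $\lambda\leqslant\delta^\ast$ and the construction applies; for $\varepsilon$ bounded away from $0$ the right-hand side $\varepsilon^{\frac{3}{\alpha+1}-1}$ stays bounded while $E_\varepsilon$ is bounded below by a positive constant (the datum $n\cdot e_2$ being non-trivial), so the constant $C_\alpha$ (resp. $C_\infty$) can be adjusted to cover the whole range $\varepsilon\in(0,\varepsilon^\ast]$. The main, and rather mild, obstacle I anticipate is justifying the geometric reduction $n_2\,{\rm d}\sigma={\rm d}\xi_1$ and checking that the far-from-cusp part of $\Gamma_\varepsilon$ cannot spoil the sign of the boundary integral; both are settled by the compact support of $\phi$, so no delicate estimate is required.
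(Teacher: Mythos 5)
Your proposal is correct, and it takes a genuinely different route from the paper's own proof. Both arguments start from the Dirichlet principle \eqref{Dirichlet:P} and bound $E_\varepsilon$ from below by evaluating the functional at a single test function, but the test functions are quite different. The paper builds a piecewise polynomial $W$ whose main piece is the quadratic $W_1(\xi)=-\frac{1}{2\varepsilon}(\xi_1^2-\xi_2^2)$ on a region of horizontal extent $|\zeta_1|=(\varepsilon/\kappa)^{1/(\alpha+1)}$, glued to a constant through an explicit transition region $\mathcal O_2$; this choice mimics the singular ansatz of Section~\ref{subsect:super} and forces the explicit computation of the five integrals \eqref{eq:212}--\eqref{eq:last}. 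You instead take a test function $\phi(\xi_1)=A\,\psi(\xi_1/\lambda)$ depending on $\xi_1$ only, with the same cusp scale $\lambda=(\varepsilon/\kappa)^{1/(1+\alpha)}$, reduce both terms to one-dimensional integrals via $n_2\,{\rm d}\sigma={\rm d}\xi_1$ on the graph part of $\Gamma_\varepsilon$ and $\int_0^{H_\varepsilon}{\rm d}\xi_2=H_\varepsilon$, and then optimize the resulting concave quadratic in the amplitude $A$; the value $\tfrac12\lambda^3I_1^2/(\varepsilon I_2)$ gives exactly the power $\varepsilon^{\frac{3}{\alpha+1}-1}$, and the same argument without rescaling gives $\varepsilon^{-1}$ in the flat case. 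The admissibility points you flag are real but correctly resolved: the extension by zero is in $H^1(\Omega_\varepsilon)$ because $\phi$ vanishes near the vertical sides of the cusp neighborhood, and only the graph part of $\Gamma_\varepsilon$ meets the support of $U$, so the boundary integral is exactly $\int\phi\,{\rm d}\xi_1$. Your treatment of the range of $\varepsilon$ (adjusting the constant for $\varepsilon$ bounded away from $0$) is at least as careful as the paper's, which also only establishes the bound ``for $\varepsilon$ small enough''. What your approach buys is brevity and the elimination of the transition region (the compact support of $\psi$ does the gluing); what the paper's choice buys is a test function modeled on the true singular behavior and hence a specific explicit constant $\frac{\alpha+1}{6\alpha+24}\kappa^{-3/(\alpha+1)}$ --- though neither constant is sharp, sharpness being obtained only later via Lemma~\ref{LEM:w_var} and Theorem~\ref{THEO:bad_case}.
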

\begin{rem}
Surprisingly enough, we notice by comparing with the results of Theorem~\ref{theo:main_theorem} and Proposition~\ref{prop:flat_case} that the estimates \eqref{eq:esqtim_first} are sharp. Estimates~\eqref{eq:esqtim_first} prevent the rigid body from 
colliding with the cavity's wall with non-zero velocity but do not permit to decide between the two remaining choices: ``smooth landing'' in finite time or ``infinite time touchdown''.
\end{rem}

\begin{proof}
The main idea consists in building a suitable test function in the Dirichlet principle \eqref{Dirichlet:P}. We seek this function as a piecewise polynomial. For the sake of simplicity and unless necessary, we will drop in the notation the dependence on $\varepsilon$ of the quantities introduced in the proof. 

\begin{figure}[h]
\centering
\input{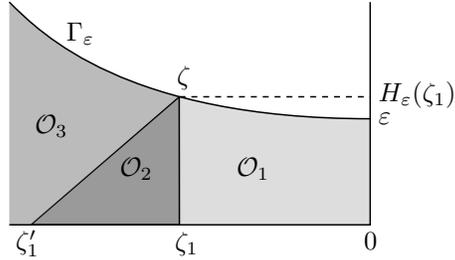}
\caption{\label{fig_proof} The partition of $\Omega_\varepsilon$ into $\mathcal O_1\cup \mathcal O_2\cup \mathcal O_3$.}
\end{figure}

Let us begin by introducing the following partition of $\Omega_\varepsilon$. The set $\mathcal O_1$ and 
$\mathcal O_2$ are as pictured on Figure~\ref{fig_proof} and $\mathcal O_3:=\Omega_\varepsilon\setminus(\overline{\mathcal O_1\cup \mathcal O_2})$. The constants
$\zeta_1<0$ and $\zeta'_1<0$ will be specified later on. 

Denoting by $\zeta$ the point $(\zeta_1,H_\varepsilon(\zeta_1))$, we define the following polynomial functions:
$$
W_1(\xi):=-\frac{1}{2\varepsilon}(\xi_1^2-\xi_2^2)\quad\text{ and }\quad
W_2(\xi):=\frac{1}{2\varepsilon}(\xi_2+H_\varepsilon(\zeta_1))(\xi_2-h(\xi_1))+W_1(\zeta),
$$
where 
$$
h(\xi_1):=\frac{H_\varepsilon(\zeta_1)}{\zeta_1-\zeta'_1}(\xi_1-\zeta'_1).
$$
Finally, the test function to be used in \eqref{Dirichlet:P} reads: 
\begin{equation}
W(\xi):=\begin{cases}
W_1(\xi)&\text{in }\mathcal O_1\\
W_2(\xi)&\text{in }\mathcal O_2\\
W_1(\zeta)&\text{in }\mathcal O_3.
\end{cases}
\end{equation}
One can easily check that $W\in H^1(\Omega_\varepsilon)$ and that on the boundary $\Gamma_\varepsilon$ of the solid we have:
$$W(\xi)=
\begin{cases} 
W_1(\xi)&\text{on }\overline{\mathcal O_1}\cap \Gamma_\varepsilon\\
W_1(\zeta)&\text{otherwise on }\Gamma_\varepsilon.
\end{cases}
$$
Based on formula \eqref{Dirichlet:P}, we can obtain a lower bound for $E_\varepsilon$ as follows:
$$E_\varepsilon\geqslant \int_{\Gamma_\varepsilon} n_2 W\,{\rm d}\sigma-\frac{1}{2}\int_{\Omega_\varepsilon}|\nabla W|^2\,{\rm d}\xi.$$
Since $\int_{\Gamma_\varepsilon} n_2\,{\rm d}\sigma=0$ and $W$ is a constant function on $\Gamma_\varepsilon\setminus{\bar{\mathcal O}_1}$, we can rewrite the inequality above as:
\begin{equation}
\label{main_estim}
E_\varepsilon\geqslant \int_{\Gamma_\varepsilon\cap \mathcal  O_1}\hspace{-0.3cm} n_2 W_1\,{\rm d}\sigma-W_1(\zeta)\int_{\Gamma_\varepsilon\cap \mathcal O_1} \hspace{-0.3cm}n_2\,{\rm d}\sigma
-\frac{1}{2}\int_{\mathcal O_1}|\nabla W_1|^2\,{\rm d}\xi-\frac{1}{2}\int_{\mathcal O_2}|\nabla W_2|^2\,{\rm d}\xi.
\end{equation}
We can now compute explicitly every term arising in the right hand side of this estimate. We have:
$$\int_{\Gamma_\varepsilon\cap \mathcal  O_1}n_2 W_1\,{\rm d}\sigma=-\frac{1}{2\varepsilon}\int_0^{|\zeta_1|} \left[\xi_1^2-(H_\varepsilon(\xi_1))^2\right]\,{\rm d}\xi_1,$$
and then, after some elementary algebra, we get:
\begin{equation}
\label{eq:212}
\int_{\Gamma_\varepsilon\cap \mathcal  O_1}n_2 W_1\,{\rm d}\sigma=\frac{1}{\varepsilon}\left[\frac{\kappa^2|\zeta_1|^{3+2\alpha}}{6+4\alpha}-\frac{|\zeta_1|^3}{6}\right]
+\frac{\kappa|\zeta_1|^{2+\alpha}}{2+\alpha}+\varepsilon\frac{|\zeta_1|}{2}.
\end{equation}
Addressing the second term in the right hand side of \eqref{main_estim} and observing that $\int_{\Gamma_\varepsilon\cap \mathcal O_1} n_2\,{\rm d}\sigma=|\zeta_1|$, we get:
\begin{align}
-W_1(\zeta)\int_{\Gamma_\varepsilon\cap \mathcal O_1} n_2\,{\rm d}\sigma&=\frac{|\zeta_1|}{2\varepsilon}\left[\zeta_1^2-(H_\varepsilon(\zeta_1))^2\right]\nonumber\\
&=\frac{1}{2\varepsilon}\left[|\zeta_1|^3-\kappa^2|\zeta_1|^{3+2\alpha}\right]-\kappa|\zeta_1|^{2+\alpha}-\varepsilon\frac{|\zeta_1|}{2}.
\label{eq:213}
\end{align}
The third term is computed as follows:
$$\int_{\mathcal O_1}|\nabla W_1|^2\,{\rm d}\xi=\frac{1}{\varepsilon^2}\int_0^{|\zeta_1|}\int_0^{H_\varepsilon(\xi_1)}(\xi_1^2+\xi_2^2)\,{\rm d}\xi,$$
and this expression leads to:
\begin{equation}
\label{eq:214}
\int_{\mathcal O_1}|\nabla W_1|^2\,{\rm d}\xi=\frac{1}{\varepsilon^2}\left[\frac{\kappa|\zeta_1|^{4+\alpha}}{4+\alpha}+\frac{\kappa^3|\zeta_1|^{4+3\alpha}}{12+9\alpha}\right]
+\frac{1}{\varepsilon}\left[\frac{\kappa^2|\zeta_1|^{3+2\alpha}}{3+2\alpha}+\frac{|\zeta_1|^3}{3}\right]
+\frac{\kappa|\zeta_1|^{2+\alpha}}{2+\alpha}+\varepsilon\left[\frac{|\zeta_1|}{3}\right].
\end{equation}
For  the last term of \eqref{main_estim}, we have:
$$\int_{\mathcal O_2}|\nabla W_2|^2\,{\rm d}\xi=\int_{\zeta_1'}^{\zeta_1}\int_0^{H_0(\xi_1)}\left|\partial_{\xi_1}W_2(\xi)\right|^2+
\left|\partial_{\xi_2}W_2(\xi)\right|^2\,{\rm d}\xi$$
where
$$
\partial_{\xi_1}W_2(\xi)=-\frac{1}{2\varepsilon}(H_\varepsilon(\zeta_1)+\xi_2)H_0'(\xi_1)\quad\text{ and }\quad
\partial_{\xi_2}W_2(\xi)=\frac{1}{2\varepsilon}(H_\varepsilon(\zeta_1)-H_0(\xi_1))+\frac{1}{\varepsilon}\xi_2.$$
After a tedious but straightforward computation, we obtain that:
\begin{equation}
\label{eq:last}
\int_{\mathcal O_2}\left|\partial_{\xi_1}W_2(\xi)\right|\,{\rm d}\xi=\frac{11}{48}\frac{1}{\varepsilon^2}\frac{(H_\varepsilon(\zeta_1))^5}{|\zeta'_1-\zeta_1|}\quad\text{ and }\quad
\int_{\mathcal O_2}\left|\partial_{\xi_2}W_2(\xi)\right|\,{\rm d}\xi=\frac{7}{48}\frac{1}{\varepsilon^2}(H_\varepsilon(\zeta_1))^3|\zeta'_1-\zeta_1|.
\end{equation}
Now, we choose $\zeta_1=-(\varepsilon/\kappa)^{\frac{1}{\alpha+1}}$ (so that $H_\varepsilon(\zeta_1)=2\varepsilon$) and $\zeta'_1=\zeta_1-\varepsilon$. Substituting \eqref{eq:212}, \eqref{eq:213}, \eqref{eq:214}, and \eqref{eq:last} into \eqref{main_estim}, we obtain the following asymptotic expansion: 
\begin{multline*}
\int_{\Gamma_\varepsilon\cap \mathcal  O_1} n_2 W_1\,{\rm d}\sigma-W_1(\zeta)\int_{\Gamma_\varepsilon\cap \mathcal O_1} n_2\,{\rm d}\sigma
-\frac{1}{2}\int_{\mathcal O_1}|\nabla W_1|^2\,{\rm d}\xi-\frac{1}{2}\int_{\mathcal O_2}|\nabla W_2|^2\,{\rm d}\xi\\
=\left(\frac{\alpha+1}{6\alpha+24}\right)\kappa^{-\frac{3}{\alpha+1}}\varepsilon^{\frac{3}{\alpha+1}-1}+
o\left(\varepsilon^{\frac{3}{\alpha+1}-1}\right).\hspace{3cm}
\end{multline*}
For $\varepsilon$ small enough, we get \eqref{estm:blow_up}.
For a solid with a flat bottom, it suffices to replace $H_0$ by $0$ in all the estimates and $\zeta_1$ by a small constant (such that $H_0=0$ on $]\zeta_1,0[$), to get the claimed result \eqref{estm:blow_up_flat}. The proof is now complete.
\end{proof}
The two following propositions allow comparing the Dirichlet energy after simple changes in the geometry.
\begin{prop}
\label{prop:comp}
Let us denote by $E^{[{\mathsf{C}},S_0]}_\varepsilon$ the Dirichlet energy corresponding to a solid of shape $S_0$ in a cavity ${\mathsf{C}}$. If ${\mathsf{C}}^1\subset {\mathsf{C}}^2$ then 
\begin{equation}
\label{estm:includ}
E^{[{\mathsf{C}}^1,S_0]}_\varepsilon\geqslant E^{[{\mathsf{C}}^2,S_0]}_\varepsilon\quad\forall\,\varepsilon>0.
\end{equation}
\end{prop}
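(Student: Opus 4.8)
The plan is to exploit the variational characterization \eqref{Dirichlet:P} of the energy together with the monotonicity of the fluid domain with respect to the cavity. Writing $\Omega^1_\varepsilon:=\mathsf{C}^1\setminus S_\varepsilon$ and $\Omega^2_\varepsilon:=\mathsf{C}^2\setminus S_\varepsilon$, the assumption $\mathsf{C}^1\subset \mathsf{C}^2$ immediately gives the inclusion $\Omega^1_\varepsilon\subset\Omega^2_\varepsilon$. The crucial structural observation is that the inner boundary $\Gamma_\varepsilon=\partial S_\varepsilon$ is the \emph{same} in both configurations (same solid $S_0$, hence same $S_\varepsilon$), and that it is a common portion of $\partial\Omega^1_\varepsilon$ and $\partial\Omega^2_\varepsilon$ approached from a common neighborhood lying in both domains; only the outer wall $\partial\mathsf{C}$ differs.

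First I would let $U_2\in H^1(\Omega^2_\varepsilon)$ denote the maximizer realizing $E^{[\mathsf{C}^2,S_0]}_\varepsilon$ in \eqref{Dirichlet:P}, and use its restriction $U_2|_{\Omega^1_\varepsilon}$ as a \emph{test function} in the Dirichlet principle for the smaller domain $\Omega^1_\varepsilon$. Since the restriction of an $H^1$ function to a subdomain remains $H^1$, this is an admissible competitor, so
\begin{equation*}
E^{[\mathsf{C}^1,S_0]}_\varepsilon\geqslant \int_{\Gamma_\varepsilon} n_2\, U_2\,{\rm d}\sigma-\frac{1}{2}\int_{\Omega^1_\varepsilon}|\nabla U_2|^2\,{\rm d}\xi.
\end{equation*}
Next I would compare the right-hand side with $E^{[\mathsf{C}^2,S_0]}_\varepsilon$. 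The boundary functional is unchanged: because the trace is a local operation and $\Gamma_\varepsilon$ is reached from a neighborhood contained in both domains, the trace of $U_2|_{\Omega^1_\varepsilon}$ on $\Gamma_\varepsilon$ coincides with that of $U_2$, and the outward normal $n$ on $\Gamma_\varepsilon$ points into the solid in both cases, so $\int_{\Gamma_\varepsilon} n_2\, U_2\,{\rm d}\sigma$ has the same value for $\Omega^1_\varepsilon$ and $\Omega^2_\varepsilon$. Meanwhile the Dirichlet integral can only decrease under restriction to the smaller domain, $\int_{\Omega^1_\varepsilon}|\nabla U_2|^2\,{\rm d}\xi\leqslant\int_{\Omega^2_\varepsilon}|\nabla U_2|^2\,{\rm d}\xi$.

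Combining these two facts yields
\begin{equation*}
E^{[\mathsf{C}^1,S_0]}_\varepsilon\geqslant \int_{\Gamma_\varepsilon} n_2\, U_2\,{\rm d}\sigma-\frac{1}{2}\int_{\Omega^2_\varepsilon}|\nabla U_2|^2\,{\rm d}\xi=E^{[\mathsf{C}^2,S_0]}_\varepsilon,
\end{equation*}
the last equality holding precisely because $U_2$ is the maximizer on $\Omega^2_\varepsilon$. This gives \eqref{estm:includ}. The argument is essentially a soft domain-monotonicity of the Dirichlet principle, so there is no genuinely hard computation; the only point requiring care, which I would state explicitly, is the claim that the boundary term $\int_{\Gamma_\varepsilon} n_2\, U_2\,{\rm d}\sigma$ is invariant under restriction. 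This rests on the fact that $S_\varepsilon$ is compactly contained in $\mathsf{C}^1$ (so that a full neighborhood of $\Gamma_\varepsilon$ inside the fluid lies in both $\Omega^1_\varepsilon$ and $\Omega^2_\varepsilon$), which guarantees that the two traces and the two normals agree on $\Gamma_\varepsilon$.
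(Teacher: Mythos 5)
Your proposal is correct and follows essentially the same route as the paper: restrict an $H^1$ competitor from $\Omega^2_\varepsilon$ to $\Omega^1_\varepsilon$, observe that the boundary functional on $\Gamma_\varepsilon$ is unchanged while the Dirichlet integral can only decrease, and conclude via the Dirichlet principle \eqref{Dirichlet:P}. The only cosmetic difference is that you plug in the maximizer $U_2$ explicitly, whereas the paper runs the inequality for an arbitrary $U\in H^1(\Omega^2_\varepsilon)$ and then takes the supremum, which sidesteps the (harmless) question of attainment.
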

In other words, this Proposition asserts that the bigger the cavity is, the lower is the Dirichlet energy. 

\begin{proof}
For every $\varepsilon>0$, we set $\Omega_\varepsilon^k:={\mathsf{C}}^k\setminus S_\varepsilon$ ($k=1,2$).
For every function $U\in H^1(\Omega^2_\varepsilon)$, its restriction to $\Omega^1_\varepsilon$ belongs to $H^1(\Omega^1_\varepsilon)$ and we have the obvious inequality:
$$\int_{\Gamma_\varepsilon} n_2 U\,{\rm d}\sigma-\frac{1}{2}\int_{\Omega^2_\varepsilon}|\nabla U|^2\,{\rm d}\xi
\leqslant\int_{\Gamma_\varepsilon} n_2 U\,{\rm d}\sigma-\frac{1}{2}\int_{\Omega^1_\varepsilon}|\nabla U|^2\,{\rm d}\xi.$$
The conclusion follows then from the Dirichlet principle \eqref{Dirichlet:P}.
\end{proof}
The next Proposition tells us that the Dirichlet energy can be compared for configurations that are images one from the other by a global $C^1$ diffeormorphism. 
\begin{prop}
\label{prop:diffeo}
Let $B$ be a large ball containing  a cavity ${\mathsf{C}}$. For every $0\leqslant \varepsilon\leqslant \varepsilon^\ast$, consider the usual configuration involving a solid $S_0$, its domain $S_\varepsilon$, its boundary $\Gamma_\varepsilon:=\partial S_\varepsilon$ and the fluid domain $\Omega_\varepsilon:={\mathsf{C}}\setminus S_\varepsilon$. 

Let $\tilde H_0:\mathbb R\to\mathbb R$ be a given $C^1$ function. Let $\mathcal U$ be an open set containing $\Gamma_\varepsilon$ for every $\varepsilon\geqslant 0$ small enough (see Figure~\ref{fig_diffeo}). 

For every $C^1$ diffeomorphism $\Phi:B\to B$ such that
\begin{equation}
\label{assump}
\Phi(\xi_1,\xi_2)=(\xi_1,\xi_2+\tilde H_0(\xi_1)),\quad\forall\,\xi=(\xi_1,\xi_2)\in\mathcal U,
\end{equation}
the following estimate holds true (for every $\varepsilon> 0$ small enough):
$$c_1 E^{[{\mathsf{C}},S_0]}_\varepsilon\leqslant E^{[\tilde{\mathsf{C}},\tilde S_0]}_\varepsilon\leqslant c_2 E^{[{\mathsf{C}},S_0]}_\varepsilon,$$
where $\tilde{\mathsf{C}}:=\Phi({\mathsf{C}})$, $\tilde S_0:=\Phi(S_0)$ and $c_1>0$ and $c_2>0$ are two constants depending only on ${\mathsf{C}}$, $S_0$ and $\Phi$.
\end{prop}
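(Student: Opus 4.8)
The plan is to read off the two-sided estimate directly from the Dirichlet principle \eqref{Dirichlet:P}, using $\Phi$ to transport admissible functions between the two geometries. I would first record the geometric effect of $\Phi$. By \eqref{assump}, $\Phi$ acts on $\mathcal U$ as the vertical shear $\xi\mapsto(\xi_1,\xi_2+\tilde H_0(\xi_1))$, which commutes with the vertical translation $\xi\mapsto\xi+\varepsilon e_2$ and has unit Jacobian there. Since $\Gamma_\varepsilon=\Gamma_0+\varepsilon e_2\subset\mathcal U$ for all small $\varepsilon\geqslant 0$, this commutation gives $\Phi(\Gamma_\varepsilon)=\Phi(\Gamma_0)+\varepsilon e_2=\partial\tilde S_0+\varepsilon e_2=\tilde\Gamma_\varepsilon$; as $\Phi$ is a diffeomorphism of $B$, it then maps $S_\varepsilon$ onto the region enclosed by $\tilde\Gamma_\varepsilon$, that is onto $\tilde S_\varepsilon$, whence $\Phi(\Omega_\varepsilon)=\tilde\Omega_\varepsilon$. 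Consequently $U\mapsto\tilde U:=U\circ\Phi^{-1}$ is a bijection from $H^1(\Omega_\varepsilon)$ onto $H^1(\tilde\Omega_\varepsilon)$.

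The crucial point is that the boundary linear form is \emph{exactly} preserved. Along a curve the weighted surface element $n\cdot e_2\,{\rm d}\sigma$ equals, up to orientation, the differential ${\rm d}\xi_1$ of the horizontal coordinate. Because the shear leaves the first coordinate unchanged on $\mathcal U\supset\Gamma_\varepsilon$, writing a point of $\tilde\Gamma_\varepsilon$ as $\Phi(\xi)$ with $\xi\in\Gamma_\varepsilon$ gives $\tilde n_2\,{\rm d}\tilde\sigma=n_2\,{\rm d}\sigma$ together with $\tilde U(\Phi(\xi))=U(\xi)$, so that
\[
\int_{\tilde\Gamma_\varepsilon}\tilde n_2\,\tilde U\,{\rm d}\tilde\sigma=\int_{\Gamma_\varepsilon}n_2\,U\,{\rm d}\sigma=:L(U).
\]
By contrast, the Dirichlet integral becomes anisotropic: the change of variables yields $\int_{\tilde\Omega_\varepsilon}|\nabla\tilde U|^2\,{\rm d}x=\int_{\Omega_\varepsilon}\mathbb A_\Phi\nabla U\cdot\nabla U\,{\rm d}\xi$, where the symmetric field $\mathbb A_\Phi$ is built from $D\Phi$ and $\det D\Phi$ exactly as $\mathbb A_\varepsilon$ is built from $D\Psi_\varepsilon$ in \eqref{eq:defAepsilon}. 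Since $\Phi$ is a $C^1$ diffeomorphism of the compact set $\overline B$, the field $\mathbb A_\Phi$ is uniformly bounded and uniformly elliptic, so there are constants $0<\lambda\leqslant\Lambda$ depending only on $\Phi$ with $\lambda\,a(U)\leqslant a_\Phi(U)\leqslant\Lambda\,a(U)$, where $a(U):=\int_{\Omega_\varepsilon}|\nabla U|^2\,{\rm d}\xi$ and $a_\Phi(U):=\int_{\Omega_\varepsilon}\mathbb A_\Phi\nabla U\cdot\nabla U\,{\rm d}\xi$.

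To conclude, I would combine these facts with \eqref{Dirichlet:P}. The bijection and the two previous identities give
\[
E^{[\tilde{\mathsf C},\tilde S_0]}_\varepsilon=\max_{U\in H^1(\Omega_\varepsilon)}\Big\{L(U)-\tfrac12 a_\Phi(U)\Big\},\qquad E^{[{\mathsf C},S_0]}_\varepsilon=\max_{U\in H^1(\Omega_\varepsilon)}\Big\{L(U)-\tfrac12 a(U)\Big\}.
\]
For each fixed $U$, optimizing the scalar factor in $tU$ (the functional being homogeneous) turns each maximum into a Rayleigh quotient, namely $\max_U\{L(U)-\frac12 q(U)\}=\frac12\sup_U L(U)^2/q(U)$ for $q\in\{a,a_\Phi\}$; the inequalities $\lambda a\leqslant a_\Phi\leqslant\Lambda a$ then yield at once $\Lambda^{-1}E^{[{\mathsf C},S_0]}_\varepsilon\leqslant E^{[\tilde{\mathsf C},\tilde S_0]}_\varepsilon\leqslant\lambda^{-1}E^{[{\mathsf C},S_0]}_\varepsilon$, which is the claim with $c_1=\Lambda^{-1}$ and $c_2=\lambda^{-1}$. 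The one genuinely delicate step is the exact invariance of $L$: it is what makes the constants independent of $\varepsilon$, and it rests essentially on the special structure \eqref{assump} of $\Phi$ near the solid. Away from $\Gamma_\varepsilon$ the diffeomorphism may be arbitrary, affecting only the values of $\lambda,\Lambda$; I would still check carefully that $\Phi(\Omega_\varepsilon)=\tilde\Omega_\varepsilon$ (so that the push-forward is onto) and that $L$ is well defined on $H^1$ through the trace theorem.
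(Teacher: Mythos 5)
Your proof is correct and follows essentially the same route as the paper: exact invariance of the boundary linear form $\int_{\Gamma_\varepsilon} n_2 U\,{\rm d}\sigma$ under the shear \eqref{assump} (the paper verifies this via a parameterization and the identity $D\Phi^{\mathbf T}e_1=e_1$, which is equivalent to your observation that $n_2\,{\rm d}\sigma=\pm\,{\rm d}\xi_1$), a change of variables turning the Dirichlet integral into a uniformly elliptic quadratic form, and the Dirichlet principle \eqref{Dirichlet:P}. Your concluding step is in fact slightly cleaner: you extract both inequalities at once from $\lambda a\leqslant a_\Phi\leqslant\Lambda a$ by homogeneity (this uses that $L$ vanishes on constants, i.e.\ $\int_{\Gamma_\varepsilon}n_2\,{\rm d}\sigma=0$, which is worth stating), whereas the paper proves one bound and obtains the other by applying the same argument to $\Phi^{-1}$.
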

Notice that the assumption \eqref{assump} entails that
$$\Phi(S_\varepsilon)=\tilde S_\varepsilon\quad\text{ and }\quad\Phi(\Omega_\varepsilon)=\tilde\Omega_\varepsilon,$$
for every $\varepsilon\geqslant0$ small enough, where $\Omega_\varepsilon:={\mathsf{C}}\setminus S_\varepsilon$ and $\tilde \Omega_\varepsilon:=\tilde{\mathsf{C}}\setminus \tilde S_\varepsilon$ and that the local parameterization of the fluid 
domain  $\tilde\Omega_{\varepsilon}$ near the origin is now given by
$$
\{\xi\in\mathbb R^2\,:\,|\xi_1|<\delta^\ast,\,\tilde H_0(\xi_1)<\xi_{2}<H_{\varepsilon}(\xi_{1})+ \tilde H_{0}(\xi_{1})\}.
$$
\begin{figure}[h]
\centering
\input{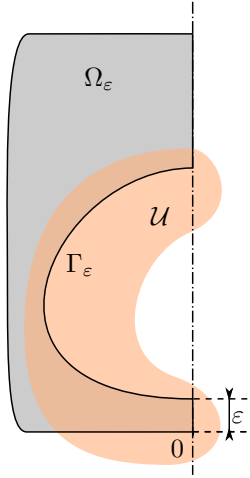}
\caption{\label{fig_diffeo} The open set $\mathcal U$ containing the solid's boundary for every $\varepsilon\geqslant 0$.}
\end{figure}

\begin{proof}
Let $\phi:[0,1]\to \Gamma_0$ be a parameterization of the boundary of the solid when $\varepsilon=0$. Then $\phi_\varepsilon=\phi+\varepsilon e_2$ is a parameterization of $\Gamma_\varepsilon$ and 
$\Phi\circ\phi_\varepsilon$ a parameterization of $\tilde\Gamma_\varepsilon:=\Phi(\Gamma_\varepsilon)=\partial \tilde S_\varepsilon$ for every $\varepsilon\geqslant 0$. 

For every $\varepsilon>0$ and every $v\in H^1(\tilde\Omega_\varepsilon)$, we have:
\begin{align*}
\int_{\tilde\Gamma_\varepsilon} n_2\, v\,{\rm d}\sigma&=\int_0^1 [(D\Phi(\phi(s))\phi'(s))^\perp\cdot e_2] v(\Phi(\phi(s))\,{\rm d}s\\
&=-\int_0^1 [\phi'(s)\cdot D\Phi(\phi(s))^{\mathbf{T}} e_1] v(\Phi(\phi(s))\,{\rm d}s.
\end{align*}
From assumption \eqref{assump}, we infer that $D\Phi(\phi(s))^{\mathbf{T}} e_1=e_1$ and therefore:
\begin{equation}
\label{eq:vtilde:1}
\int_{\tilde\Gamma_\varepsilon} n_2\, v\,{\rm d}\sigma=\int_{\Gamma_\varepsilon} n_2\, \tilde v\,{\rm d}\sigma,
\end{equation}
where $\tilde v:=v\circ\Phi$. 

On the other hand, we have upon the change of variables $x=\Phi(\xi)$ the identity:
\begin{equation}
\label{eq:vtilde:2}
\int_{\tilde\Omega_\varepsilon}|\nabla v|^2\,{\rm d}\xi=\int_{\Omega_\varepsilon}\mathbb A\nabla\tilde v\cdot\nabla\tilde v\,{\rm d}x,
\end{equation}
where $\mathbb A:=(D\Phi)^{-1}(D\Phi)^{-{\mathbf{T}}}|\det D\Phi|$. The matrix $\mathbb A$ is positive-definite and hence there exist two positive constants $\lambda_1$ and $\lambda_2$  such that $$\lambda_1|X|^2\leqslant \mathbb A(\xi)X\cdot X\leqslant \lambda_2|X|^2,$$
for every $X\in\mathbb R^2$ and every $\xi\in \overline{\mathsf C}$. Without loss of generality, we can assume that $\lambda_{1}\leqslant 1 \leqslant \lambda_{2}$. Gathering \eqref{eq:vtilde:1} and \eqref{eq:vtilde:2}, we get:
$$
\int_{\tilde\Gamma_\varepsilon} n_2\, v\,{\rm d}\sigma-\frac{1}{2}\int_{\tilde\Omega_\varepsilon}|\nabla v|^2\,{\rm d}\xi=
\int_{\Gamma_\varepsilon} n_2\, \tilde v\,{\rm d}\sigma-\frac{1}{2}\int_{\Omega_\varepsilon}\mathbb A\nabla\tilde v\cdot\nabla\tilde v\,{\rm d}x
$$
and then, since $\lambda_{1}\leqslant 1$, according to \eqref{Dirichlet:P} we get
\begin{equation}
\label{eq:conclud}
E_\varepsilon^{[{\mathsf C},S_0]}\leqslant \lambda_1 E_\varepsilon^{[\tilde{\mathsf C},\tilde S_0]}.
\end{equation}
Remarking that $\Phi^{-1}$  enjoys the properties required for $\Phi$ to get \eqref{eq:conclud}, we deduce that we also have:
$$E_\varepsilon^{[\tilde{\mathsf C},\tilde S_0]}\leqslant \lambda_2^{-1}E_\varepsilon^{[{\mathsf C},S_0]}.$$
The claim of the Proposition follows.
\end{proof}

Typical illustrations of the above result are given in Figures~\ref{fig3} and \ref{fig_pic}.
\begin{figure}[h]
\centering
\input{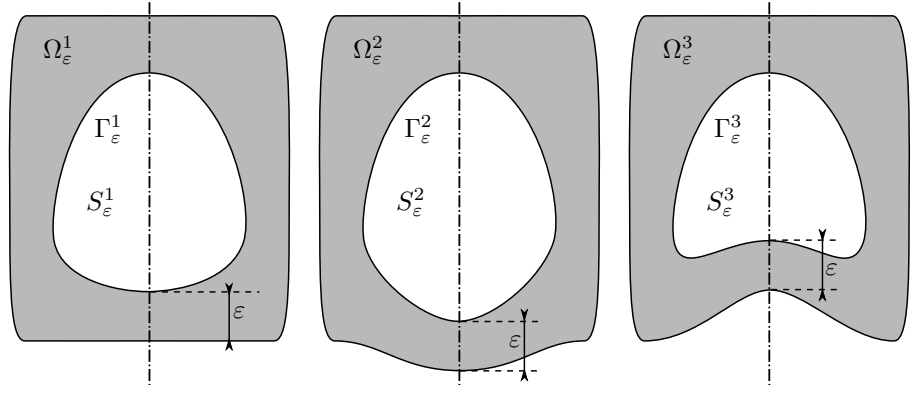}
\caption{\label{fig3} According to Proposition~\ref{prop:diffeo}, the Dirichlet energy behaves similarly as $\varepsilon\to 0^+$ for all of these cases.}
\end{figure}
\begin{figure}[h]
\centering
\input{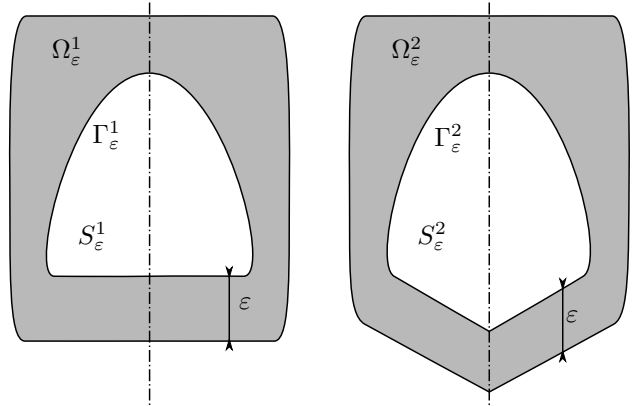}
\caption{\label{fig_pic} Another example of two configurations where, according to Proposition \ref{prop:diffeo}, the Dirichlet energy can be compared as $\varepsilon\to 0^+$ (notice on this example how
we take advantage of working with a half configuration and then recover a full configuration by symmetry).}
\end{figure}

\medskip
\noindent{\bf Application. }Combining Propositions~\ref{prop:estim_alpha}, \ref{prop:comp}  and \ref{prop:diffeo}, we can deduce an estimate for the case where the bottom of the solid
is concave (see Figure~\ref{fig2}, on the right) and where there are two contact points for $\varepsilon=0$.  Indeed, with the notation of Figure~\ref{fig2}, according to Proposition~\ref{prop:comp}, for every $\varepsilon>0$ we have 
$$E^{[{\mathsf C}^1,S^1_0]}_\varepsilon\geqslant E^{[{\mathsf C}^2,S^2_0]}_\varepsilon$$ 
and according to Proposition~\ref{prop:diffeo}, there exists a constant $c>0$ such that
$$c\,E^{[{\mathsf C}^2,S^2_0]}_\varepsilon\geqslant E^{[{\mathsf C}^3,S^3_0]}_\varepsilon.$$
Using now Proposition~\ref{prop:estim_alpha}, we infer the existence of a constant $C>0$ such that:
$$C\varepsilon^{-1}\geqslant E^{[{\mathsf C}^3,S^3_0]}_\varepsilon.$$
Consequently, the energy blow up is no greater in case 3 than in case 1.
\begin{figure}[h]
\centering
\input{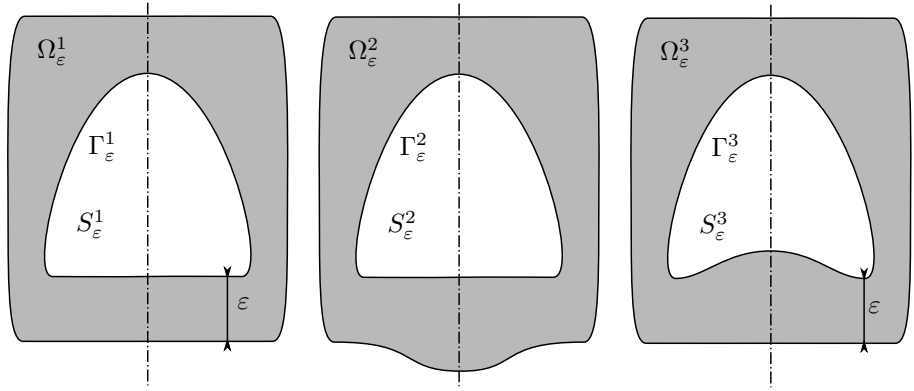}
\caption{\label{fig2} The energy blow-up in these 3 cases can be compared thanks to Proposition~\ref{prop:comp} and \ref{prop:diffeo}.}
\end{figure}
\section{From the physical domain to the semi-infinite strip}
\label{sect:Psi}
In this section, we describe the change of coordinates $x=\Psi_\varepsilon(\xi)$, $0\leqslant\varepsilon\leqslant \varepsilon^\ast$, used in the sequel to transform the Laplace Neumann problem \eqref{eq:uepsilon} set on $\Omega_{\varepsilon}$ into an elliptic Neumann problem set on  
$\omega_{\varepsilon}:= \Psi_\varepsilon(\Omega_{\varepsilon})=D\cup R_{\varepsilon}$ (see Figure~\ref{fig:chg_of_vari}), where $D$ is a fixed domain and $R_{\varepsilon}=]0,\ell_{\varepsilon}[\times ]0,1[$. Our change of variables is a generalization to the case $\varepsilon>0$ of the one introduced by Ibuki \cite{Ibu74} and used later by Grisvard in \cite{Gri95} and Acosta {\it et al.} in \cite{AcoArmDur05} to study the well-posedness and the regularity of Laplace problems in domains with cusps (in other words this corresponds in our problem to the limit case $\varepsilon=0$). The full description of the diffeomorphism $\Psi_\varepsilon\in C^1(\overline{\Omega}_{\varepsilon},\overline{\omega}_{\varepsilon})$ is given in Appendix~\ref{App:chg_of_vari}. For the sake of clarity, we only give here its definition on some neighborhood of the contact region. More precisely, for every $\varepsilon\geqslant0$ and given $\delta<0$ small enough, let 
\begin{equation}
\label{def:mathcalV}
\mathcal V_\varepsilon:=\{\xi\in\mathbb R^2\,:\delta<\xi_1<0,\,0<\xi_2<H_\varepsilon(\xi_1)\}\subset \Omega_\varepsilon.
\end{equation}
Then, we set in $ \mathcal V_\varepsilon$:
\begin{equation}
\label{def:Psi_on_V}
\Psi_\varepsilon(\xi)=\begin{pmatrix} 
\rho_\varepsilon(\xi_1)\\
\dfrac{\xi_2}{H_\varepsilon(\xi_1)}
\end{pmatrix},
\qquad \forall\xi\in \mathcal V_\varepsilon,
\end{equation}
where the function $\rho_\varepsilon:[\delta,0[\to \mathbb R^+$ is given by
\begin{equation}
\label{def_rho}
\rho_\varepsilon(\xi_1):=\int_\delta^{\xi_1}\frac{\,{\rm d}s}{H_\varepsilon(s)}.
\end{equation}
Introducing
$$\ell_\varepsilon:=\lim_{\xi_1\to 0^+}\rho_\varepsilon(\xi_1),$$ 
We note that $\ell_\varepsilon<+\infty$ for $\varepsilon>0$ and $\ell_0:=+\infty$. More precisely, based on the identity:
$$\int_0^{+\infty}\frac{{\rm d}s}{s^{1+\alpha}+1}=\frac{\pi/(\alpha+1)}{\sin(\pi/(\alpha+1))},$$ 
we can easily verify that:
\begin{equation}
\label{comp_l_eps}
\ell_\varepsilon\underset{\varepsilon=0}{\sim}\varepsilon^{-\frac{\alpha}{\alpha+1}}\kappa^{-\frac{1}{\alpha+1}}\left(\frac{\pi/(\alpha+1)}{\sin(\pi/(\alpha+1))}\right).
\end{equation}
Setting $R_\varepsilon:=\Psi_\varepsilon(\mathcal V_\varepsilon)$, we get:
\begin{equation}
\label{def_R}
R_\varepsilon=]0,\ell_\varepsilon[\times ]0,1[.
\end{equation}
We define the reciprocal function to $\rho_\varepsilon$ as being:
\begin{equation}
\label{def_mu}
\mu_\varepsilon:=\rho_\varepsilon^{-1}\,:\,[0,\ell_\varepsilon[\to [\delta,0[.
\end{equation}
When $\varepsilon=0$, the function $\mu_\varepsilon$ can be made explicit and we have:
\begin{equation}
\label{def_mu0}
\mu_0(x_1)=-(\alpha\kappa)^{-\frac{1}{\alpha}}(x_1+{\hat x}_1)^{-\frac{1}{\alpha}},
\end{equation}
where ${\hat x}_1=(\alpha\kappa)^{-1}|\delta|^{-\alpha}$. For all $\varepsilon\geqslant 0$, the function $\Psi_\varepsilon^{-1}$ admits the following expression in $R_\varepsilon$:
\begin{equation}
\label{def:Psi-1}
\Psi_\varepsilon^{-1}(x)=
\begin{pmatrix}
\mu_\varepsilon(x_1)\\
x_2H_\varepsilon(\mu_\varepsilon(x_1))
\end{pmatrix},
\qquad \forall x\in R_\varepsilon.
\end{equation}
We collect, in the following Lemma, some properties of the function $\mu_\varepsilon$ ($\varepsilon\geqslant 0$) that will be useful in the sequel:
\begin{lemma}
\label{LEM:proper_mu}
\begin{enumerate}
\item The following uniform convergence result holds true:
\begin{equation}
\label{unif_conv}
\|\mu_\varepsilon-\mu_0\|_{C^0([0,\ell_\varepsilon])}\to 0\text{ as }\varepsilon\to 0^+.
\end{equation}
\item There exist three positive constants $C_1$, $C_2$ and $C_3$, depending on $\alpha$, $\kappa$ and $\delta$ only, such that, for every $\varepsilon\geqslant 0$ and every $x_1\in[0,\ell_\varepsilon[$:
\begin{subequations}
\label{estim_phi_esp}
\begin{align}
|\mu_\varepsilon(x_1)|&\leqslant C_1(1+x_1)^{-\frac{1}{\alpha}}\label{estim_phi_esp:0}\\
|H_\varepsilon(\mu_\varepsilon)|&\leqslant C_2(1+x_1)^{-1-\frac{1}{\alpha}}\label{estim_phi_esp:1}\\
|{H'_0}(\mu_\varepsilon)|&\leqslant C_2(1+x_1)^{-1}.\label{estim_phi_esp:2}
\end{align}
\end{subequations}
\end{enumerate}
\end{lemma}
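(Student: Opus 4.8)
The plan is to derive everything from a single monotonicity observation. Since $H_\varepsilon = H_0 + \varepsilon \geq H_0 > 0$ on $[\delta,0[$, the defining formula \eqref{def_rho} gives $\rho_\varepsilon \leq \rho_0$ pointwise; as $\rho_\varepsilon$ and $\rho_0$ are increasing bijections onto their ranges, inverting this inequality yields $\mu_\varepsilon(x_1) \geq \mu_0(x_1)$, and hence $|\mu_\varepsilon(x_1)| \leq |\mu_0(x_1)|$, for every $x_1 \in [0,\ell_\varepsilon[$. The same argument applied to two values $0 < \varepsilon' < \varepsilon$ shows that $\varepsilon \mapsto \mu_\varepsilon(x_1)$ is nondecreasing, so $\mu_\varepsilon \downarrow \mu_0$ as $\varepsilon \downarrow 0$. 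The point of this step is that it reduces every pointwise upper bound on $|\mu_\varepsilon|$, on $H_\varepsilon(\mu_\varepsilon) = \kappa|\mu_\varepsilon|^{1+\alpha}+\varepsilon$ and on $|H_0'(\mu_\varepsilon)| = \kappa(1+\alpha)|\mu_\varepsilon|^{\alpha}$ to the \emph{explicit} case $\varepsilon = 0$, where $\mu_0$ is given by \eqref{def_mu0}.

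For the estimates \eqref{estim_phi_esp} I would first record that $x_1 + \hat x_1 \geq c\,(1+x_1)$ for all $x_1 \geq 0$, with $c = \min(\hat x_1,1) > 0$, so that \eqref{def_mu0} gives $|\mu_0(x_1)| \leq C(1+x_1)^{-1/\alpha}$, and likewise $\kappa|\mu_0|^{1+\alpha} \leq C(1+x_1)^{-1-1/\alpha}$ and $\kappa(1+\alpha)|\mu_0|^{\alpha} \leq C(1+x_1)^{-1}$. Combined with $|\mu_\varepsilon| \leq |\mu_0|$, this immediately produces \eqref{estim_phi_esp:0}, \eqref{estim_phi_esp:2} and the $\kappa|\mu_\varepsilon|^{1+\alpha}$ part of \eqref{estim_phi_esp:1}. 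The only genuinely $\varepsilon$-dependent term is the additive $\varepsilon$ in $H_\varepsilon(\mu_\varepsilon)$: here I must check that $\varepsilon \leq C(1+x_1)^{-1-1/\alpha}$ holds \emph{uniformly} over the whole interval $x_1 \in [0,\ell_\varepsilon[$. Since the right-hand side is smallest at $x_1 = \ell_\varepsilon$, it suffices to bound $\varepsilon\,(1+\ell_\varepsilon)^{1+1/\alpha}$, and the equivalence \eqref{comp_l_eps} (equivalently the elementary bound $\ell_\varepsilon \leq C_0\,\varepsilon^{-\alpha/(\alpha+1)}$, obtained by extending the integral defining $\ell_\varepsilon$ to $[0,+\infty[$) gives exactly $\varepsilon\,(1+\ell_\varepsilon)^{1+1/\alpha} \leq C$, because $\tfrac{\alpha}{\alpha+1}\cdot\tfrac{\alpha+1}{\alpha} = 1$. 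This closes \eqref{estim_phi_esp:1}.

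For the uniform convergence \eqref{unif_conv} I would split the interval at a threshold $M$. On the tail $x_1 \in [M,\ell_\varepsilon[$, the estimate \eqref{estim_phi_esp:0} already proved gives $0 \leq \mu_\varepsilon - \mu_0 \leq |\mu_\varepsilon| + |\mu_0| \leq 2C_1(1+M)^{-1/\alpha}$, which is small independently of $\varepsilon$ once $M$ is large. On the compact part $x_1 \in [0,M]$ I would use the explicit error $g := \rho_0 - \rho_\varepsilon = \int_\delta^{\cdot}\tfrac{\varepsilon}{H_0 H_\varepsilon}\,{\rm d}s \geq 0$: evaluating at $\xi_1 = \mu_\varepsilon(x_1)$ gives $\rho_0(\mu_\varepsilon(x_1)) = x_1 + g(\mu_\varepsilon(x_1))$, so that $\mu_\varepsilon(x_1) - \mu_0(x_1) = \mu_0(x_1 + g(\mu_\varepsilon(x_1))) - \mu_0(x_1)$, which the mean value theorem and $\mu_0' = H_0(\mu_0)$ bound by $H_0(\mu_0(x_1))\,g(\mu_\varepsilon(x_1))$. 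Since $\mu_\varepsilon(x_1) \leq \mu_\varepsilon(M) \leq \mu_{\varepsilon^\ast}(M) < 0$ for $x_1 \in [0,M]$ and $\varepsilon \leq \varepsilon^\ast$, the integrand of $g$ is controlled by $\varepsilon/m_M^2$ with $m_M := \kappa|\mu_{\varepsilon^\ast}(M)|^{1+\alpha} > 0$, whence $\sup_{[0,M]} g(\mu_\varepsilon) \leq C_M\,\varepsilon$ and $\sup_{[0,M]}|\mu_\varepsilon - \mu_0| \leq H_0(\delta)\,C_M\,\varepsilon \to 0$. Choosing $M$ first (to kill the tail) and then letting $\varepsilon \to 0^+$ yields \eqref{unif_conv}.

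The step I expect to be the main obstacle is this last uniform convergence on the \emph{growing} domain $[0,\ell_\varepsilon]$: one cannot simply invoke Dini's theorem on a fixed compact set, because the upper endpoint escapes to infinity. The decoupling above — controlling the tail by the (already uniform) decay estimate \eqref{estim_phi_esp:0} and the bulk by an $O(\varepsilon)$ bound on a fixed compact set — is precisely what circumvents this difficulty; everything else reduces, via the monotonicity of the first step, to explicit computations with $\mu_0$.
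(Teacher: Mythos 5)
Your proof is correct, and it shares the paper's key first step — the monotonicity $\rho_\varepsilon\leqslant\rho_0$, hence $\mu_\varepsilon\geqslant\mu_0$ and $|\mu_\varepsilon|\leqslant|\mu_0|$, which reduces \eqref{estim_phi_esp:0} and \eqref{estim_phi_esp:2} to the explicit formula \eqref{def_mu0} exactly as the authors do. Where you diverge is in the remaining two items. For \eqref{estim_phi_esp:1}, the paper studies the ratio $G_\varepsilon:=H_\varepsilon(\mu_\varepsilon)/H_0(\mu_0)$, shows via $G_\varepsilon'=G_\varepsilon[H_0'(\mu_\varepsilon)-H_0'(\mu_0)]\geqslant 0$ that it is nondecreasing, and bounds it by its value at $\ell_\varepsilon$ using \eqref{comp_l_eps}; you instead peel off the additive $\varepsilon$ and kill it with the elementary identity $\varepsilon\,\ell_\varepsilon^{1+1/\alpha}=O(1)$, which is arguably more direct and avoids the asymptotic evaluation of $G_\varepsilon(\ell_\varepsilon)$. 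For \eqref{unif_conv}, the paper exploits the same ratio to show that $F_\varepsilon:=\mu_\varepsilon-\mu_0$ is increasing on $[0,\ell_\varepsilon]$, whence $\|F_\varepsilon\|_{C^0}=-\mu_0(\ell_\varepsilon)\to 0$ in one line; your tail/bulk splitting (tail controlled by the uniform decay \eqref{estim_phi_esp:0}, bulk by an explicit $O(\varepsilon)$ bound on $\rho_0-\rho_\varepsilon$) reaches the same conclusion with more work but less structure — it does not require differentiating $G_\varepsilon$ and would survive in settings where the monotonicity of $F_\varepsilon$ fails. The only point to tidy up is your use of $\mu_{\varepsilon^\ast}(M)$ on the compact part: this requires $M<\ell_{\varepsilon^\ast}$, so for large $M$ you should replace $\varepsilon^\ast$ by some threshold $\varepsilon^\dagger$ with $\ell_{\varepsilon^\dagger}>M$ and restrict to $\varepsilon\leqslant\varepsilon^\dagger$; since \eqref{unif_conv} is a statement about the limit $\varepsilon\to 0^+$, this costs nothing.
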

The proof in postponed to Appendix~\ref{SEC:technic}.

In Appendix~\ref{App:chg_of_vari}, we explain how to define $\Psi_\varepsilon$ in $\Omega_\varepsilon\setminus{\mathcal V}_\varepsilon$ in such 
a way that  $\Psi_\varepsilon( \Omega_\varepsilon\setminus{\mathcal V}_\varepsilon)$ defines a domain $D$ which is independent of $\varepsilon$ (see Figure~\ref{fig:chg_of_vari}).

\begin{figure}[h]
\centering
\input{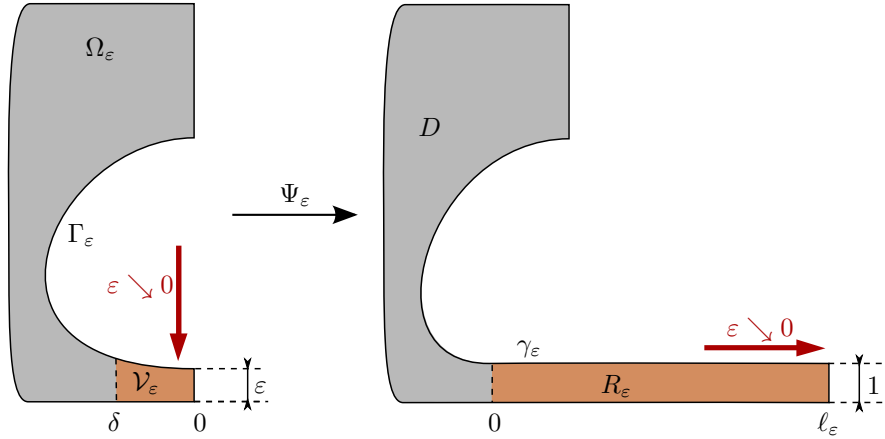}
\caption{\label{fig:chg_of_vari}The domain $\Omega_\varepsilon$  and its image $\omega_\varepsilon$ by $\Psi_\varepsilon$. In particular, $D:=\Psi_\varepsilon( \Omega_\varepsilon\setminus{\mathcal V}_\varepsilon)$ 
does not depend on $\varepsilon\geqslant 0$.}
\end{figure}

Notice that $\ell_\varepsilon\nearrow+\infty$ ($= \ell_{0}$) as $\varepsilon\searrow 0^+$ and therefore that we have the nice inclusion properties:
$$\varepsilon'>\varepsilon\,\Rightarrow\, \omega_{\varepsilon'}\subset \omega_{\varepsilon}\qquad\,\forall\,\varepsilon,\,\varepsilon'\geqslant 0.$$


\begin{prop}
\label{prop:chgt_variables}
The following convergence property holds:
\begin{subequations}
\begin{equation}
\|\Psi_\varepsilon^{-1}-\Psi_{0}^{-1}\|_{C^1(\overline{\omega}_\varepsilon)}\to 0\text{ as }\varepsilon\to 0^+.
\end{equation}
Moreover, if $\alpha>1$, we also have:
\begin{equation}
\|\Psi_\varepsilon^{-1}-\Psi_{0}^{-1}\|_{C^2(\overline{R}_\varepsilon)}\to 0\text{ as }\varepsilon\to 0^+.
\end{equation}
\end{subequations}
\end{prop}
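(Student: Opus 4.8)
The plan is to split the closed domain $\overline{\omega}_\varepsilon=\overline{D}\cup\overline{R}_\varepsilon$ and treat the two pieces separately. On the fixed part $D$ the image is independent of $\varepsilon$, and by the explicit construction of $\Psi_\varepsilon$ carried out in Appendix~\ref{App:chg_of_vari} the $\varepsilon$-dependence there enters only through the smooth datum $H_\varepsilon$ and a fixed cut-off; the convergence of $\Psi_\varepsilon^{-1}$ and its derivatives on $\overline{D}$ then reduces, just as on the strip below, to the uniform convergence of $H_\varepsilon$, $H_0'$ (and $H_0''$) composed with $\mu_\varepsilon$. Hence the heart of the matter is the strip $R_\varepsilon$, where the closed-form expression \eqref{def:Psi-1} is available.

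First I would differentiate \eqref{def:Psi-1}. Since $\rho_\varepsilon'=1/H_\varepsilon$ by \eqref{def_rho} and $\mu_\varepsilon=\rho_\varepsilon^{-1}$, one has $\mu_\varepsilon'=H_\varepsilon(\mu_\varepsilon)$; moreover the additive constant $\varepsilon$ disappears under differentiation, so $H_\varepsilon'=H_0'$ and $H_\varepsilon''=H_0''$. A direct computation then gives
\[
D\Psi_\varepsilon^{-1}(x)=\begin{pmatrix} H_\varepsilon(\mu_\varepsilon) & 0 \\[2pt] x_2\,H_0'(\mu_\varepsilon)\,H_\varepsilon(\mu_\varepsilon) & H_\varepsilon(\mu_\varepsilon)\end{pmatrix},
\]
all entries being evaluated at $x_1$, with $x_2\in[0,1]$. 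Thus the $C^1$-convergence on $R_\varepsilon$ amounts to the uniform convergence over $x_1\in[0,\ell_\varepsilon]$ of $H_\varepsilon(\mu_\varepsilon)\to H_0(\mu_0)$ and $H_0'(\mu_\varepsilon)\to H_0'(\mu_0)$, products of bounded uniformly convergent families then converging uniformly. The key observation is that for \emph{every} $\alpha>0$ the functions $H_0$ and $H_0'(\xi_1)=\kappa(1+\alpha)|\xi_1|^{\alpha}\,\mathrm{sgn}(\xi_1)$ are continuous on the compact interval $[\delta,0]$ (note $H_0'$ vanishes at the origin), hence uniformly continuous there. Composing a uniformly continuous function with the uniformly convergent family $\mu_\varepsilon$ of Lemma~\ref{LEM:proper_mu} (estimate \eqref{unif_conv}) and using $H_\varepsilon-H_0=\varepsilon\to 0$ yields the claimed uniform convergence; together with the $C^0$-convergence of the components $\mu_\varepsilon$ and $x_2H_\varepsilon(\mu_\varepsilon)$ (same argument) this proves the first assertion on $\overline{R}_\varepsilon$.

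For the second assertion I would differentiate once more. The only genuinely new term comes from $\partial_{x_1}^2$ of the second component, which produces $x_2\big[H_0''(\mu_\varepsilon)\,H_\varepsilon(\mu_\varepsilon)^2+H_0'(\mu_\varepsilon)^2\,H_\varepsilon(\mu_\varepsilon)\big]$; the remaining second derivatives vanish or equal quantities already handled, such as $H_0'(\mu_\varepsilon)H_\varepsilon(\mu_\varepsilon)$. Here $H_0''(\xi_1)=\kappa\alpha(1+\alpha)|\xi_1|^{\alpha-1}$, and this is exactly where the hypothesis $\alpha>1$ is indispensable, and where the main obstacle lies: for $\alpha>1$ the exponent $\alpha-1$ is positive, so $H_0''$ is continuous (indeed vanishing) at $\xi_1=0$ and therefore uniformly continuous on $[\delta,0]$, whereas for $\alpha<1$ it blows up at the cusp image $x_1\to\ell_\varepsilon$ and even $C^2$-boundedness fails. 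Restricting to $\alpha>1$, the same uniform-continuity-plus-\eqref{unif_conv} argument gives $H_0''(\mu_\varepsilon)\to H_0''(\mu_0)$ uniformly, while the decay estimates \eqref{estim_phi_esp} ensure that all the products above are uniformly bounded (in fact decaying as $x_1\to\infty$), so every second-order entry converges uniformly on $\overline{R}_\varepsilon$. Combined with the remark on $\overline{D}$, this establishes the $C^2$-convergence and completes the proof.
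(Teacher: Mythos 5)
Your proposal is correct and follows essentially the same route as the paper: it reduces the claim on $\overline{R}_\varepsilon$ to explicit differentiation of \eqref{def:Psi-1} via $\mu_\varepsilon'=H_\varepsilon(\mu_\varepsilon)$, $\mu_\varepsilon''=H_0'(\mu_\varepsilon)H_\varepsilon(\mu_\varepsilon)$, $\mu_\varepsilon'''=H_0''(\mu_\varepsilon)H_\varepsilon(\mu_\varepsilon)^2+H_0'(\mu_\varepsilon)^2H_\varepsilon(\mu_\varepsilon)$, and concludes from the uniform continuity of $H_0$, $H_0'$ (and of $H_0''$ when $\alpha>1$) on the compact $[\delta,0]$ together with the uniform convergence \eqref{unif_conv}, deferring the fixed part $D$ to the construction in the appendix exactly as the paper does.
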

\begin{proof}
We focus on the convergence on the rectangle $R_\varepsilon$, the rest the proof being given in Appendix~\ref{App:chg_of_vari}.

For all $\varepsilon\geqslant 0$, recall that the expression of the function $\Psi_\varepsilon^{-1}$ in $R_\varepsilon$ is given in \eqref{def:Psi-1}.

We have $\mu_\varepsilon'=H_\varepsilon(\mu_\varepsilon)$,  
$\mu''_\varepsilon=H_0'(\mu_\varepsilon)H_\varepsilon(\mu_\varepsilon)$ and $\mu_\varepsilon'''=H_0''(\mu_\varepsilon)H_\varepsilon(\mu_\varepsilon)^2+H_0'(\mu_\varepsilon)^2H_\varepsilon(\mu_\varepsilon)$ for all $\varepsilon\geqslant 0$. 
Since the functions $H_0$ and $H'_0$ are bounded and uniformly continuous on the compact $[\delta,0]$ (because $\alpha>0$), and the same holds true for $H''_0$ if $\alpha>1$, the conclusion follows from \eqref{unif_conv}.
\end{proof}
We can now make explicit the matrix $\mathbb A_\varepsilon$, arising in the statement of the Neumann problem~\eqref{eq:uepsilon} (at least in the rectangle $R_\varepsilon$), based on formulas \eqref{eq:defAepsilon} and \eqref{def:Psi-1}:
\begin{equation}
\mathbb A_\varepsilon(x)=
{\rm Id}+x_2 H_0'(\mu_\varepsilon(x_1))
\begin{pmatrix}
0&-1\\
-1&x_2H_0'(\mu_\varepsilon(x_1)
\end{pmatrix}\qquad (\varepsilon\geqslant 0,\,x\in R_\varepsilon).
\end{equation} 
We claim:
\begin{lemma}
\label{lem:ellipticiteuniform}
\begin{enumerate}
\item The following convergence result holds true: 
\begin{subequations}
\label{conv:Aeps_gene}
\begin{equation}
\label{conv:Aeps}
\|\mathbb A_\varepsilon-\mathbb A_0\|_{C^0(\overline{\omega}_\varepsilon)}\to 0\text{ as }\varepsilon\to 0^+.
\end{equation}
If $\alpha>1$, we also have:
\begin{equation}
\label{conv:Aeps1}
\|\mathbb A_\varepsilon-\mathbb A_0\|_{C^1(\overline{R}_\varepsilon)}\to 0\text{ as }\varepsilon\to 0^+.
\end{equation}
\end{subequations}
\item
There exist two constants 
$0<\lambda_1<\lambda_2$, independent of $\varepsilon\geqslant 0$, such that:
\begin{equation}
\label{estim_A}
\lambda_1 |X|^2\leqslant \mathbb A_\varepsilon(x) X\cdot  X\leqslant\lambda_2 |X|^2, \qquad \forall\,   X\in \mathbb R^2, \ \forall\, x\in \omega_\varepsilon.
\end{equation}
\end{enumerate}
\end{lemma}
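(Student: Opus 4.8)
The plan is to prove the two assertions separately and, in each, to split $\omega_\varepsilon=D\cup R_\varepsilon$ into the fixed compact piece $\overline D$ and the elongated rectangle $R_\varepsilon$: the former is handled through the $C^1$-convergence of the change of variables (Proposition~\ref{prop:chgt_variables}), the latter through the explicit expression of $\mathbb A_\varepsilon$ on $R_\varepsilon$ recalled just above the statement.

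For the uniform ellipticity \eqref{estim_A}, I would first record two algebraic facts valid on all of $\omega_\varepsilon$. Setting $B:=D\Psi_\varepsilon^{-1}$ and using the inverse function theorem in \eqref{eq:defAepsilon}, one has $\mathbb A_\varepsilon=B^{-1}B^{-{\mathbf{T}}}|\det B|$; hence $\mathbb A_\varepsilon$ is symmetric and positive definite (because $B$ is invertible), and in dimension two $\det\mathbb A_\varepsilon=(\det B)^{-2}|\det B|^{2}=1$. The two eigenvalues of $\mathbb A_\varepsilon(x)$ being therefore reciprocal, it suffices to bound the largest one from above uniformly in $\varepsilon$ and $x$: an upper bound $\lambda_2$ immediately yields the lower bound $\lambda_1:=1/\lambda_2$. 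The required uniform bound on the entries of $\mathbb A_\varepsilon$ is then obtained region by region. On $R_\varepsilon$ the explicit matrix shows that the only non-constant coefficient is $t:=x_2H_0'(\mu_\varepsilon(x_1))$, and since $x_2\in\,]0,1[$ and $|H_0'(\mu_\varepsilon(x_1))|\leqslant C_2$ by \eqref{estim_phi_esp:2}, every entry is bounded by $1+C_2^{2}$, uniformly in $\varepsilon$. On the fixed compact set $\overline D$, Proposition~\ref{prop:chgt_variables} gives $B=D\Psi_\varepsilon^{-1}\to D\Psi_0^{-1}$ uniformly with $\det B$ bounded away from zero for $\varepsilon$ small, so $\mathbb A_\varepsilon=B^{-1}B^{-{\mathbf{T}}}|\det B|$ stays uniformly bounded there (the remaining values of $\varepsilon$ being controlled by continuity on the compact $[0,\varepsilon^\ast]\times\overline D$).

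For the convergence \eqref{conv:Aeps} I would split once more. On $\overline D$, $\mathbb A_\varepsilon$ is a smooth (rational) function of the entries of $B$ away from $\{\det B=0\}$, so the uniform $C^1$-convergence $\Psi_\varepsilon^{-1}\to\Psi_0^{-1}$ of Proposition~\ref{prop:chgt_variables} transfers to $\mathbb A_\varepsilon\to\mathbb A_0$ uniformly. On $R_\varepsilon$, subtracting the explicit matrices leaves only the quantities $t_\varepsilon-t_0$ and $t_\varepsilon^{2}-t_0^{2}$, with $t_\varepsilon:=x_2H_0'(\mu_\varepsilon(x_1))$; these being bounded, the whole matter reduces to $\|H_0'(\mu_\varepsilon)-H_0'(\mu_0)\|_{C^0([0,\ell_\varepsilon])}\to0$, which follows from the uniform convergence $\mu_\varepsilon\to\mu_0$ of \eqref{unif_conv} combined with the uniform continuity of $H_0'$ on the compact interval $[\delta,0]$ (guaranteed by $\alpha>0$). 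For the $C^1$-convergence \eqref{conv:Aeps1} when $\alpha>1$, I would differentiate the explicit entries: the $x_2$-derivatives are polynomials in $t_\varepsilon$ and are handled as before, while the only new term is $\partial_{x_1}\big[H_0'(\mu_\varepsilon)\big]=H_0''(\mu_\varepsilon)H_\varepsilon(\mu_\varepsilon)$ (using $\mu_\varepsilon'=H_\varepsilon(\mu_\varepsilon)$). Writing it as $[H_0''(\mu_\varepsilon)-H_0''(\mu_0)]H_\varepsilon(\mu_\varepsilon)+H_0''(\mu_0)[H_\varepsilon(\mu_\varepsilon)-H_0(\mu_0)]$ and using \eqref{unif_conv}, the boundedness \eqref{estim_phi_esp:1}, and the uniform continuity of $H_0''$ on $[\delta,0]$ --- valid precisely because $\alpha>1$ --- shows it converges uniformly to $H_0''(\mu_0)H_0(\mu_0)$.

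The point deserving the most care, and the real obstacle, is that the diffeomorphism degenerates at infinity: on $R_\varepsilon$ one has $\det D\Psi_\varepsilon^{-1}=H_\varepsilon(\mu_\varepsilon)^{2}\to0$ as $x_1\to\ell_\varepsilon$, so neither ellipticity nor convergence of $\mathbb A_\varepsilon$ on $R_\varepsilon$ can be read off from the convergence of $\Psi_\varepsilon^{-1}$ alone. What saves the argument is the weight $|\det D\Psi_\varepsilon^{-1}|$ built into \eqref{eq:defAepsilon}, which exactly compensates this degeneration: it produces the identity $\det\mathbb A_\varepsilon\equiv1$ and the bounded explicit form on $R_\varepsilon$, making $\mathbb A_\varepsilon$ uniformly elliptic and convergent up to infinity even though the underlying change of variables is not uniformly regular there.
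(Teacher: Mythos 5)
Your proof is correct, and for the ellipticity it takes a slightly different (and arguably cleaner) route than the paper. The paper first observes that $\mathbb A_\varepsilon$ is positive definite, reduces to the case $\varepsilon=0$ by invoking continuity of eigenvalues together with the convergence \eqref{conv:Aeps}, and then computes the two eigenvalues of $\mathbb A_0$ on $R_0$ explicitly as $F_j(|x_2H_0'(\mu_0(x_1))|)$ with $F_1(X)=1+\tfrac12X[X-\sqrt{X^2+4}]$ and $F_2(X)=1+\tfrac12X[X+\sqrt{X^2+4}]$, bounding them by monotonicity on $[0,(\alpha+1)\kappa|\delta|^\alpha]$. Your observation that $\det\mathbb A_\varepsilon\equiv1$ in dimension two (equivalently, $F_1F_2=1$) lets you skip both the explicit eigenvalue computation and the reduction to $\varepsilon=0$: a uniform entry bound plus $\det=1$ immediately gives reciprocal eigenvalue bounds for all $\varepsilon\geqslant0$ at once, which is more economical and avoids the paper's somewhat loose ``continuity of eigenvalues'' step. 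The quantitative input is the same in both arguments, namely $|x_2H_0'(\mu_\varepsilon(x_1))|\leqslant(\alpha+1)\kappa|\delta|^\alpha$ on $R_\varepsilon$ (your $C_2$). For the convergence statements the paper simply declares them a ``straightforward consequence of Proposition~\ref{prop:chgt_variables}''; your unpacking --- reducing on $R_\varepsilon$ to the uniform convergence of $H_0'(\mu_\varepsilon)$ and $H_0''(\mu_\varepsilon)H_\varepsilon(\mu_\varepsilon)$ via \eqref{unif_conv} and the uniform continuity of $H_0'$ (resp.\ $H_0''$ when $\alpha>1$) on $[\delta,0]$ --- is exactly the mechanism hidden inside the proof of that proposition, and is worth making explicit precisely because, as you correctly stress, $\det D\Psi_\varepsilon^{-1}$ degenerates at infinity so the convergence of $\mathbb A_\varepsilon$ cannot be read off from that of $\Psi_\varepsilon^{-1}$ by composing with a smooth function of the Jacobian.
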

\begin{proof}
The convergences \eqref{conv:Aeps_gene} are a straightforward consequence of Proposition~\ref{prop:chgt_variables}.

The definition \eqref{eq:defAepsilon} of $\mathbb A_\varepsilon$ ($\varepsilon\geqslant 0$) entails that $\mathbb A_\varepsilon(x)$ is positive-definite for every $\varepsilon\geqslant 0$ and every $x\in\omega_\varepsilon$.
Since the eigenvalues depend continuously on the matrix, it suffices to  prove \eqref{estim_A} for $\varepsilon=0$ to get the conclusion of the Lemma. We would be done if $\overline{\omega}_{0}$ were a compact. The way out consists in  
computing the expression of the eigenvalues of $\mathbb A_{0}$ in $R_{0}$. Indeed, we get:
$$
\lambda_j(x)=F_j(|x_2H_0'(\mu_\varepsilon(x_1))|),\quad j=1,2
$$
where
$$
F_1(X):=1+\frac{1}{2}X\left[X-\sqrt{X^2+4}\right],\quad F_2(X):=1+\frac{1}{2}X\left[X+\sqrt{X^2+4}\right],$$
and $F_1(X)\geqslant \lambda_1:=F_1((\alpha+1)\kappa|\delta|^\alpha)>0$ and $F_2(X)\leqslant \lambda_2:=F_2((\alpha+1)\kappa|\delta|^\alpha)$ for every $X\in [0,(\alpha+1)\kappa|\delta|^\alpha]$.
The proof is now complete.
\end{proof}

\section{Functional framework}
\label{SEC:function}

The domain $\omega_{\varepsilon}$ being bounded for $\varepsilon>0$ and $-\dv(\mathbb A_\varepsilon\nabla\cdot)$ being an elliptic operator, problem \eqref{eq:uepsilon} is  a well-posed Neumann problem for every $\varepsilon>0$, the solution being uniquely defined in $H^{1}(\omega_{\varepsilon})$, up to an additive constant (and provided the compatibility condition is satisfied). In order to study the well-posedness of this system when $\varepsilon=0$, we need to introduce a suitable functional framework since the domain $\omega_{0}$ is infinite in the $x_{1}$ direction. More precisely, for every $\varepsilon\geqslant 0$ and every $\beta\in\mathbb R$, we introduce on $\omega_{\varepsilon}$ and $\partial\omega_\varepsilon$ respectively the measures
$$
{\rm d}\nu_\beta(x):=\begin{cases}(1+x_1)^{\beta}\,{\rm d}x&\text{if }x\in R_\varepsilon\\
{\rm d}x&\text{if }x\in D.
\end{cases}\quad\text{and}\quad
{\rm d}\nu^S_\beta(x):=\begin{cases}(1+x_1)^{\beta}\,{\rm d}x_1&\text{if }x\in \gamma^R_\varepsilon\\
{\rm d}s&\text{if }x\in \gamma^D_\varepsilon,
\end{cases}
$$
where $\gamma^R_\varepsilon:=\{(x_1,1)\,:\, 0<x_1<\ell_\varepsilon\}$ and $\gamma_\varepsilon^D:=\gamma_\varepsilon\setminus \gamma^R_\varepsilon$.

Since the cases $\beta=-2$ and $\beta=2$ will play a particular role, we set
$${\rm d}m={\rm d}\nu_{-2},\quad{\rm d}m^{-1}={\rm d}\nu_{2},\quad{\rm d}\sigma={\rm d}\nu^S_{-2}\quad\text{ and }\quad{\rm d}\sigma^{-1}={\rm d}\nu^S_{2}.$$
Throughout, $L^1(\omega_{\varepsilon},\,{\rm d}\nu_\beta)$, $L^2(\omega_{\varepsilon},\,{\rm d}\nu_\beta)$, $L^1(\gamma_{\varepsilon},\,{\rm d}\nu_\beta^S)$ and 
$L^2(\gamma_{\varepsilon},\,{\rm d}\nu_\beta^S)$ stand for the Lebesgue spaces of integrable and square-integrable functions respectively for the measures ${\rm d}\nu_\beta$ and ${\rm d}\nu_\beta^S$.

For every $\varepsilon\geqslant 0$ and every $\beta\in\mathbb R$, we define the weighted Sobolev spaces:
$$
H^1(\omega_{\varepsilon},\,{\rm d}\nu_\beta):=\left\{u\in L^2(\omega_{\varepsilon},{\rm d}\nu_\beta)\,:\,\partial_{x_i}u\in L^2(\omega_{\varepsilon}),\,i=1,2\right\}.$$
In the particular case $\beta=2$, we also set:
\begin{equation}
\label{eq:defH1N}
H^1_N(\omega_{\varepsilon},{\rm d}m)=\left\{u\in H^1(\omega_{\varepsilon},{\rm d}m)\,:\,\int_{\omega_{\varepsilon}} u\,{\rm d}m=0\right\}.
\end{equation}
This space is well defined (regarding  the $L^1$ condition for $\varepsilon=0$), as it will be verified in Lemma~\ref{lem:conv_l1} below.

Since $\omega_{\varepsilon}$ is bounded  for $\varepsilon>0$, the space $H^1(\omega_{\varepsilon},\,{\rm d}\nu_\beta)$ is, for every $\beta\in\mathbb R$, isomorphic to the classical Sobolev space $H^1(\omega_{\varepsilon})$. However, the use of of the weight 
is more convenient as it will allow us to obtain estimates (in the trace theorems, for the continuity and the coercivity) involving constants which are uniform with respect to $\varepsilon\geqslant 0$. 

The introduction of the space  $H^1_N(\omega_{\varepsilon},{\rm d}m)$ is motivated by the following definition of solutions:
\begin{definition}[Finite energy solution]
\label{def:solution}
For $\varepsilon\geqslant 0$, let be $f_{\varepsilon}\in L^2(\omega_\varepsilon,{\rm d}m^{-1})$ an $g_{\varepsilon}\in L^2(\omega_\varepsilon,{\rm d}\sigma^{-1})$ 
satisfying the compatibility condition:
\begin{equation}
\label{comp:cond}
\int_{\omega_\varepsilon} f_\varepsilon\,{\rm d}x+\int_{\gamma_\varepsilon} g_\varepsilon\,{\rm d}x=0.
\end{equation}
Then, a function $u_\varepsilon\in H^1_N(\omega_{\varepsilon},{\rm d}m)$ is called  a  finite energy solution to System~\eqref{eq:uepsilon} if:
\begin{equation}
\label{eq:FVepsilon}
\int_{\omega_\varepsilon}\mathbb A_\varepsilon\nabla u_\varepsilon\cdot \nabla v\,{\rm d}x=\int_{\Omega_\varepsilon} f_\varepsilon v\,{\rm d}x+\int_{\Gamma_\varepsilon} g_\varepsilon v\,{\rm d}s,\qquad\forall\,v\in H^1_N(\omega_{\varepsilon},{\rm d}m).
\end{equation}
The corresponding Dirichlet energy is defined by:
\begin{equation}
\label{eq:dirichlet_energy}
E_\varepsilon:=\int_{\omega_\varepsilon}\mathbb A_\varepsilon\nabla u_\varepsilon\cdot\nabla u_\varepsilon\,{\rm d}x.
\end{equation}
\end{definition}

\begin{rem}
When $\varepsilon=0$, 
we have $f_0/m\in L^2(\omega_0,{\rm d}m)$ and $g_0/m\in L^2(\gamma_0,{\rm d}\sigma)$. We can rewrite \eqref{comp:cond} as:
$$\int_{\omega_0} \frac{f_0}{m}\,{\rm d}m+\int_{\gamma_0} \frac{g_0}{m}\,{\rm d}\sigma=0,$$
which indeed makes sense according to Lemma~\ref{lem:conv_l1} stated below.
\end{rem}
\begin{rem}\label{rem:H1NH1compatibilite}
Since $f_\varepsilon$ and $g_\varepsilon$ satisfy the compatibility condition \eqref{comp:cond}, we can equivalently replace $H^1_N(\omega_\varepsilon,{\rm d}m)$ by $H^1(\omega_\varepsilon,{\rm d}m)$ in the statement of Problem~\ref{eq:FVepsilon}, for every $\varepsilon\geqslant 0$. 
\end{rem}
\begin{rem}
It can be easily checked that for data satisfying in the physical domain the (classical) conditions $F_{0}\in L^{2}(\Omega_{0})$  and $G_{0}\in L^{2}(\Gamma_{0})$, we have necessarily $f_{0}\in L^{2} (\omega_{0}, \,{\rm d}\nu_{2+2/\alpha})$ and $g_{0}\in L^{2} (\gamma_{0}, \,{\rm d}\nu_{1+1/\alpha})$. It is also worth noticing that $L^{2} (\omega_{0}, \,{\rm d}\nu_{2+2/\alpha})\subset L^{2} (\omega_{0}, \,{\rm d}m^{-1})$ for all $\alpha>0$,  while $L^{2} (\gamma_{0}, \,{\rm d}\nu_{1+1/\alpha})\subset L^{2} (\omega_{0}, \,{\rm d}\sigma^{-1})$ for $\alpha\leqslant 1$. In particular, (non zero) constant functions are in $L^{2} (\omega_{0}, \,{\rm d}\sigma^{-1})$ only for $\alpha<2$.
\end{rem}
In the rest of this section, we collect some useful results about the functional space  $H^1_N(\omega_{\varepsilon},{\rm d}m)$ (trace theorems, Poincar\'e inequality, extension operator from $\omega_{\varepsilon}$ to $\omega_0$), paying a very careful attention to ensure that the constants appearing in these continuity estimates are independent of $\varepsilon\geqslant 0$. These results will be used in Section \ref{sect:abstract} to study the well-posedness of the variational problem \eqref{eq:FVepsilon} for $\varepsilon=0$.

Note that $H^1(\omega_{0},{\rm d}m)$ contains functions like $x\mapsto \ln (1+|x|)$, which tends to $+\infty$ as $|x|\to +\infty$. However, we have the following density result:
\begin{lemma}
\label{lem:density}
The space
\begin{equation}
\label{eq:Eomega0}
\mathcal E(\omega_{0})=\{u|_{\omega_{0}}\,:\,u\in C^\infty_0(\mathbb R^2)\}
\end{equation}
is dense in $H^1(\omega_{0},{\rm d}m)$.
\end{lemma}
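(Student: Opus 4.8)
The plan is to use the classical two-step scheme for density in weighted Sobolev spaces: first truncate in the unbounded direction $x_1$, then regularize the resulting compactly supported function by mollification. Recall that $\omega_0=D\cup R_0$ with $D$ bounded and $R_0=]0,+\infty[\times]0,1[$, that $\mathrm{d}m=(1+x_1)^{-2}\,\mathrm{d}x$ on $R_0$ (and $\mathrm{d}x$ on $D$), and, crucially, that in the definition of $H^1(\omega_0,\mathrm{d}m)$ the gradient is measured in the \emph{unweighted} space $L^2(\omega_0)$. I fix $u\in H^1(\omega_0,\mathrm{d}m)$ and a tolerance, and aim to produce $v\in\mathcal E(\omega_0)$ close to $u$.

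\textbf{Step 1 (truncation).} For $R>0$ pick a cutoff $\chi_R\in C^\infty(\mathbb R)$ with $\chi_R\equiv 1$ on $\{x_1\leqslant R\}$ (so that $\chi_R\equiv 1$ on $\overline D$ for $R$ large, leaving the bounded part untouched), $\chi_R\equiv 0$ on $\{x_1\geqslant 2R\}$, and $|\chi_R'|\leqslant C/R$; set $u_R:=\chi_R u$. Then $u-u_R$ is supported in $\{x_1>R\}$. The weighted $L^2$ term satisfies $\|u-u_R\|^2_{L^2(\omega_0,\mathrm{d}m)}\leqslant\int_{\{x_1>R\}}|u|^2\,\mathrm{d}m\to 0$ as the tail of a convergent integral. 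For the gradient, write $\nabla(u-u_R)=(1-\chi_R)\nabla u-u\,\nabla\chi_R$; the first piece is a tail of $\int_{\omega_0}|\nabla u|^2\,\mathrm{d}x<\infty$ and hence vanishes. The only delicate term is $\int|u\,\nabla\chi_R|^2\,\mathrm{d}x\leqslant \frac{C^2}{R^2}\int_{\{R<x_1<2R\}}|u|^2\,\mathrm{d}x$. This is precisely where the exponent $\beta=-2$ is tailored: on $\{R<x_1<2R\}$ one has $R^{-2}\leqslant C(1+x_1)^{-2}$, so the term is bounded by $C'\int_{\{R<x_1<2R\}}|u|^2\,\mathrm{d}m$, once more a tail of the convergent weighted integral. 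Hence $u_R\to u$ in $H^1(\omega_0,\mathrm{d}m)$, which is the heart of the argument.

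\textbf{Step 2 (regularization).} Each $u_R$ has support in the bounded set $\omega_0\cap\{x_1<2R\}$, a bounded Lipschitz domain on which $(1+x_1)^{-2}$ is bounded from above and below; thus $u_R$ lies in $H^1$ of this bounded domain in the usual sense. Using a bounded Sobolev extension operator for this Lipschitz domain together with a fixed cutoff, I extend $u_R$ to a compactly supported $\tilde u_R\in H^1(\mathbb R^2)$, mollify so that $\eta_\theta*\tilde u_R\in C^\infty_0(\mathbb R^2)$ and $\eta_\theta*\tilde u_R\to\tilde u_R$ in $H^1(\mathbb R^2)$, and restrict to $\omega_0$. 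Since $(1+x_1)^{-2}\leqslant 1$ everywhere on $\omega_0$, the weighted norm is dominated by the unweighted one, $\|v\|_{H^1(\omega_0,\mathrm{d}m)}\leqslant\|v\|_{H^1(\mathbb R^2)}$, so the restricted mollifications converge to $u_R$ in $H^1(\omega_0,\mathrm{d}m)$ with approximants in $\mathcal E(\omega_0)$. A diagonal choice of $R$ and $\theta$ concludes.

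The main obstacle is the gradient-of-cutoff term in Step~1: everything hinges on the algebraic matching between the $1/R^2$ factor produced by differentiating the cutoff and the decay $(1+x_1)^{-2}$ of the weight. This is exactly why $\mathrm{d}m=\mathrm{d}\nu_{-2}$ (and no faster-decaying weight) makes $\mathcal E(\omega_0)$ dense, and it explains how logarithmically growing functions such as $x\mapsto\ln(1+|x|)$, which do belong to $H^1(\omega_0,\mathrm{d}m)$, can nonetheless be approximated by restrictions of compactly supported smooth functions. The regularization in Step~2 is then routine given the Lipschitz regularity of the truncated domain.
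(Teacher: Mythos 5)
Your proof is correct and follows essentially the same route as the paper: truncate in $x_1$ with a cutoff whose derivative is absorbed by the weight $(1+x_1)^{-2}$, then approximate the compactly supported remainder by classical density on a bounded domain. The only cosmetic differences are that the paper uses a logarithmic cutoff for which $|\chi_n'|^2$ equals the weight exactly on the transition region (where you use the standard dilation cutoff plus the bound $R^{-2}\leqslant C(1+x_1)^{-2}$ on $\{R<x_1<2R\}$), and that you regularize via extension and mollification where the paper invokes density on the bounded truncation directly.
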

\begin{proof}
For every integer $n\geqslant 1$, define the cut-off function $\chi_n$ on $\omega_0$ by setting $\chi_n(x)=1$ in $D$ and, for every $x=(x_1,x_2)\in R_0$:
$$\chi_n(x)=\begin{cases}
1&\text{if }x_1\leqslant n\\
\displaystyle 1-\ln\left(\frac{1+x_1}{1+n}\right)&\text{if }n<x_1\leqslant N:=(n+1)e-1\\
0&\text{if }x_{1}>N.
\end{cases}$$
Let $u$ be in $H^1(\omega_{0},{\rm d}m)$ and set $u_n=u\chi_n$. We have:
\begin{align*}
\int_{\omega_{0}}|\nabla(u-u_n)|^2\,{\rm d}x&=\int_{\omega_{0}}|\nabla u - \chi_n\nabla u-u\nabla\chi_n)|^2\,{\rm d}x\\
&\leqslant 2\left(\int_{\omega_{0}} (1-\chi_n)|\nabla u|^2\,{\rm d}x+\int_{\omega_{0}} u^2|\nabla \chi_n|^2\,{\rm d}x\right).
\end{align*}
Let $\mathcal O_n^+=\{x_1>n\}\cap R_{0}$ and $\mathcal O_n^-=\omega_{0}\setminus\overline{\mathcal O_n^+}$.  We deduce from the last estimate that:
$$\int_{\omega_{0}}|\nabla(u-u_n)|^2\,{\rm d}x\leqslant 2\left(\int_{\mathcal O_n^+}|\nabla u|^2\,{\rm d}x+\int_{\mathcal O^+_n} u^2\,{\rm d}m\right),$$
and hence $\|\nabla(u-u_n)\|_{L^2(\omega_{0})}$ goes to 0 as $n$ goes to $+\infty$. Since $\|u-u_n\|_{L^2(\omega_{0},{\rm d}m)}$ obviously goes to 0 as well, we get that $\|u-u_n\|_{H^1(\omega_{0},{\rm d}m)} $ tends to 0.

Now, given $\eta>0$, fix $n$ large enough such that  
\begin{equation}
\label{eq:density:ineg1}
\|u-u_n\|_{H^1(\omega_{0},{\rm d}m)}\leqslant  \eta,
\end{equation}
and let $M$ be an integer larger that $N$. Classical density results for the standard Sobolev space $H^{1}(\mathcal O_M^-,{\rm d}m)$ on the bounded domain $\mathcal O_M^-$ ensure the existence of $v\in  C^\infty_0(\mathbb R^2)$ such that 
\begin{equation}
\label{eq:density:ineg2}
\|u_n-v\|_{H^1(\mathcal O_M^-,{\rm d}m)}\leqslant  \eta.
\end{equation}
In particular, this implies that on the rectangle $\mathfrak R:=]N,M[\times ]0,1[$ we have
\begin{equation}
\label{eq:density:ineg3}
\|v\|_{H^1(\mathfrak R,{\rm d}m)}\leqslant \eta.
\end{equation}
Set then $u^\ast =\theta v\in C^\infty_0(\mathbb R^2)$, where $\theta\in  C^\infty_0(\mathbb R)$ is a (one dimensional) cutt-off function satisfying $0\leqslant \theta(x_{1})\leqslant 1$ for all $x_{1}\in \mathbb R$, $\theta(x_{1})= 1$ for $x_1<N$, $\theta(x_{1})= 0$ for $x_1>M$. Then, we have
$$
\begin{array}{rcl}
\|u_{n}-u^\ast\|_{H^1(\omega_{0},{\rm d}m)}^2 &= &\|u_{n}-u^\ast\|_{H^1(\mathcal O_N^-,{\rm d}m)}^2
+\|u^\ast\|_{H^1(\mathcal O_N^+,{\rm d}m)}^2\\
& = &\|u_{n}- v\|_{H^1(\mathcal O_N^-,{\rm d}m)}^2
+\|\theta v\|_{H^1(\mathfrak R,{\rm d}m)}^2\\
& \leqslant &\|u_{n}- v\|_{H^1(\mathcal O_M^-,{\rm d}m)}^2
+\|\theta v\|_{H^1(\mathfrak R,{\rm d}m)}^2.
\end{array}
$$
Using \eqref{eq:density:ineg2} and \eqref{eq:density:ineg3}, the last inequality shows that
$$\|u_{n}-u^\ast\|_{H^1(\omega_{0},{\rm d}m)}\leqslant C\eta$$
for some  constant $C>0$ (depending only on $\theta$). Combining this estimate with \eqref{eq:density:ineg1} yields 
$$\|u-u^\ast\|_{H^1(\omega_{0},{\rm d}m)}\leqslant (C+1)\eta$$
which concludes the proof, since $\eta$ is arbitrary.
\end{proof}
The following Lemma explains why the case $\beta=2$ plays a particular role in the analysis:
\begin{lemma}
\label{lem:same_sobo}
For any real number  $\beta$, we have the following continuous embedding:
$$H^1(\omega_{0},\,{\rm d}\nu_{\beta})\hookrightarrow H^1(\omega_{0},{\rm d}m).$$
\end{lemma}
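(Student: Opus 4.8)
The plan is to exploit that the two norms involved differ only through the weight applied to $u$ itself. Indeed, by the very definition of the weighted spaces, the gradient term is the \emph{unweighted} $L^2(\omega_0)$ norm of $\nabla u$ in both $\|\cdot\|_{H^1(\omega_0,{\rm d}\nu_\beta)}$ and $\|\cdot\|_{H^1(\omega_0,{\rm d}m)}$; moreover, on the fixed bounded piece $D$ both ${\rm d}m$ and ${\rm d}\nu_\beta$ reduce to Lebesgue measure. Consequently it suffices to establish a bound of the form $\int_{R_0}|u|^2(1+x_1)^{-2}\,{\rm d}x\leqslant C\,\|u\|_{H^1(\omega_0,{\rm d}\nu_\beta)}^2$ on the semi-infinite strip $R_0=]0,+\infty[\times]0,1[$. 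When $\beta\geqslant -2$ this is immediate, since $(1+x_1)^{-2}\leqslant(1+x_1)^{\beta}$ there, so that the desired comparison already holds at the level of the $L^2$ weights; the only real content lies in the case $\beta<-2$, where the pointwise comparison of weights fails and the control of $\nabla u$ must be brought in.

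First I would reduce this two-dimensional estimate to a one-dimensional Hardy inequality with weight $(1+x_1)^{-2}$, applied to the cross-sectional energy $\Phi(x_1):=\int_0^1|u(x_1,x_2)|^2\,{\rm d}x_2$, which is absolutely continuous and satisfies $|\Phi'(x_1)|\leqslant 2\,\Phi(x_1)^{1/2}\big(\int_0^1|\partial_{x_1}u|^2\,{\rm d}x_2\big)^{1/2}$. Integrating $\int_0^T\Phi(x_1)(1+x_1)^{-2}\,{\rm d}x_1$ by parts against the primitive $-(1+x_1)^{-1}$, discarding the nonnegative contribution at $x_1=T$, and applying Cauchy--Schwarz, I would reach the self-improving inequality $I_T\leqslant \Phi(0)+2\sqrt{I_T}\,\sqrt{J}$, where $I_T:=\int_0^T\Phi(x_1)(1+x_1)^{-2}\,{\rm d}x_1$ and $J:=\|\partial_{x_1}u\|_{L^2(R_0)}^2$. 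Solving this quadratic inequality in $\sqrt{I_T}$ yields $I_T\leqslant 4J+2\,\Phi(0)$ uniformly in $T$, and letting $T\to+\infty$ by monotone convergence produces $\int_{R_0}|u|^2(1+x_1)^{-2}\,{\rm d}x\leqslant 4\,\|\partial_{x_1}u\|_{L^2(R_0)}^2+2\,\Phi(0)$, valid for every $\beta$.

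It then remains to control the boundary term $\Phi(0)=\|u(0,\cdot)\|_{L^2(]0,1[)}^2$, which is the squared trace of $u$ on the interface segment $\Sigma:=\{0\}\times]0,1[\subset\partial D$. Since $D$ is a fixed bounded Lipschitz domain, the classical trace theorem gives $\Phi(0)\leqslant C_D\,\|u\|_{H^1(D)}^2$, and because ${\rm d}\nu_\beta$ coincides with Lebesgue measure on $D$ one has $\|u\|_{H^1(D)}^2\leqslant\|u\|_{H^1(\omega_0,{\rm d}\nu_\beta)}^2$. Collecting the $D$ and $R_0$ contributions produces $\|u\|_{L^2(\omega_0,{\rm d}m)}^2\leqslant C\,\|u\|_{H^1(\omega_0,{\rm d}\nu_\beta)}^2$, and adding the common gradient term gives the claimed continuous embedding.

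The main obstacle will be the rigorous justification of the Hardy inequality when $\beta<-2$: a function in $H^1(\omega_0,{\rm d}\nu_\beta)$ may well grow as $x_1\to+\infty$, so no boundary term is available at infinity and one cannot integrate by parts over all of $]0,+\infty[$ at once. The device that circumvents this is precisely the truncation at $x_1=T$ together with the self-improving quadratic inequality, which bounds $I_T$ uniformly in $T$ without any a priori knowledge of the limiting integral. A secondary subtlety is that the estimate for $\Phi(0)$ must be carried out on the bounded piece $D$ using only classical trace theory, since the $\varepsilon$-uniform trace and Poincar\'e estimates of the paper are established only afterwards.
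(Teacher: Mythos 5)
Your proof is correct. Both you and the paper reduce the lemma to the $L^2$ estimate $\int_{R_0}|u|^2(1+x_1)^{-2}\,{\rm d}x\leqslant C\|u\|^2_{H^1(\omega_0,{\rm d}\nu_\beta)}$ on the semi-infinite strip (the gradient term being unweighted in both norms), and both rest on the same Hardy-type mechanism driven by the integrability of $(1+x_1)^{-2}$ at infinity, namely the identity $\int_s^{\infty}(1+t)^{-2}\,{\rm d}t=(1+s)^{-1}$ whose square is again the weight; both also truncate at a finite abscissa and derive a self-improving quadratic inequality $I\leqslant A+2\sqrt{I}\sqrt{J}$ that is solved uniformly in the truncation parameter before passing to the limit. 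Where you differ is in the bookkeeping near $x_1=0$: the paper splits $u=u\chi+u(1-\chi)$ with a cutoff supported in $\{x_1<2\}$, so that the Hardy part is applied to a function vanishing for $x_1\leqslant 1$ and no boundary term ever appears, while the near part is handled by pointwise comparison of the weights on a bounded rectangle; you instead keep the boundary term $\Phi(0)$ explicitly and absorb it via the classical trace theorem on the fixed bounded domain $D$ (correctly noting that you may not invoke the paper's $\varepsilon$-uniform trace lemma, which is proved later and itself uses the present lemma). Your one-dimensional reduction through the cross-sectional energy $\Phi(x_1)=\int_0^1|u(x_1,x_2)|^2\,{\rm d}x_2$ and an integration by parts against $-(1+x_1)^{-1}$ is arguably cleaner than the paper's two-dimensional Fubini manipulation, at the modest cost of importing the trace theorem on $D$ (or on $\mathfrak R_1$, which would do just as well); the paper's cutoff version is entirely self-contained. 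Both arguments should formally be run first for functions smooth up to the boundary of the truncated rectangle and extended by density, as the paper does and as your appeal to the absolute continuity of $\Phi$ implicitly requires.
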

\begin{proof}
The result would be obvious if $\omega_{0}$ were bounded. For the sake of simplicity and without loss of generality, we only show that
$$H^1(R_{0},\,{\rm d}\nu_{\beta})\hookrightarrow H^1(R_{0},{\rm d}m),$$
which is equivalent to prove that the continuous embedding 
$$H^1(R_{0},\,{\rm d}\nu_{\beta})\hookrightarrow L^2(R_{0},{\rm d}m),$$
holds true. Let $n$ be an integer greater than $3$ and let $u$ be in $C^1(\overline{\mathfrak R_n})$ where, for every $k>0$, $\mathfrak R_k$ denotes the rectangle $\{0<x_1<k\}\cap R_{0}$. Define the 
cutt-off  function $\chi$ in $R_0$ by:
$$\chi(x)=\begin{cases}
1&\text{if }0\leqslant x_1< 1,\\
2-x_1&\text{if }1\leqslant x_1<2,\\
0&\text{if }2\leqslant x_1,
\end{cases}$$
and set $v=u\chi$ and $w=u(1-\chi)$. We have:
\begin{equation}
\label{eq:1}
\int_{\mathfrak R_n} u^2\,{\rm d}m\leqslant 2\left(\int_{\mathfrak R_2} v^2\,{\rm d}m + \int_{\mathfrak R_n} w^2\,{\rm d}m\right).
\end{equation}
On the one hand:
$$\int_{\mathfrak R_2} v^2\,{\rm d}m\leqslant C \int_{\mathfrak R_2} v^2\,{\rm d}\nu_\beta\leqslant C \int_{\mathfrak R_2} u^2\,{\rm d}\nu_\beta,$$
where the constant $C$ depends only on $\beta$.
On the other hand, for every $x\in\mathfrak R_n$:
$$|w(x)|^2=2\int_0^{x_1}w(s,x_2)\partial_{x_1}w(s,x_2)\,{\rm d}s,$$
and hence:
$$\int_{\mathfrak R_n}|w|^2\,{\rm d}m=2\int_0^1\!\!\int_0^n\!\!\int_0^{x_1}w(s,x_2)\partial_{x_1}w(s,x_2)(1+x_1)^{-2}\,{\rm d}s\,{\rm d}x_1\,{\rm d}x_2.$$
Invoking Fubini's theorem, we get:
$$
\int_{\mathfrak R_n}|w|^2\,{\rm d}m=2\int_0^1\!\!\int_0^nw(s,x_2)\partial_{x_1}w(s,x_2)\left(\int_s^n(1+x_1)^{-2}\,{\rm d}x_1\right)\,{\rm d}s\,{\rm d}x_2,
$$
and Cauchy-Schwarz inequality leads to:
$$\int_{\mathfrak R_n}|w|^2\,{\rm d}m\leqslant 2\left(\int_{\mathfrak R_n}|\nabla w|^2\,{\rm dx}\right)^{1/2}\left(\int_{\mathfrak R_n} u^2\left(\int_s^n(1+x_1)^{-2}\,{\rm d}x_1\right)^2\,{\rm d}s\,{\rm d}x_2\right)^{1/2}.$$
Noticing that for every $n\geqslant 3$:
$$\left(\int_s^n(1+x_1)^{-2}\,{\rm d}x_1\right)^2\leqslant (1+s)^{-2},$$
and hence:
\begin{align*}
\left(\int_{\mathfrak R_n}|w|^2\,{\rm d}m\right)^{1/2}&\leqslant 2\left(\int_{\mathfrak R_n}|\nabla w|^2\,{\rm d}x\right)^{1/2}\\
&=2 \left(\int_{\mathfrak R_n}|(1-\chi)\nabla u-u\nabla\chi|^2\,{\rm d}x\right)^{1/2}\\
&\leqslant 2^{3/2}\left(\int_{\mathfrak R_n}|\nabla u|^2\,{\rm d}x\right)^{1/2}+2^{3/2}\left(\int_{\mathfrak R_2} u^2 \,{\rm d}x\right)^{1/2}\\
&\leqslant 2^{3/2}\left(\int_{\mathfrak R_n}|\nabla u|^2\,{\rm d}x\right)^{1/2}+C\left(\int_{\mathfrak R_2} u^2 \,{\rm d}\nu_\beta\right)^{1/2},
\end{align*}
where the constant $C$ depends on $\beta$ only.
In \eqref{eq:1}, we get:
$$\int_{\mathfrak R_n} u^2\,{\rm d}m\leqslant C\left(\int_{\mathfrak R_n} |\nabla u|^2\,{\rm d}x+\int_{\mathfrak R_2} u^2\,{\rm d}\nu_\beta\right)^{1/2},$$
where $C=C(\beta)$. Since $C^1(\overline{\mathfrak R_n})$ is dense in $H^1(\mathfrak R_n)$, we deduce that this estimate still holds true
for every $u\in H^1(\mathfrak R_n)$.

Let now $u$ be any function in $H^1(\omega_{0},\,{\rm d}\nu_{\beta})$ and denote by $u_n=u|_{\mathfrak R_n}\in H^1(\mathfrak R_n)$. Applying the last estimate to $u_n$ and letting $n$ go to $+\infty$, we obtain the claimed result. 
\end{proof}
\begin{rem}
\label{rem:1}
Let $\beta_1$ and $\beta_2$ be two real numbers such that $\beta_1\geqslant \beta_2$. It is obvious to check that:
$$H^1(\omega_{0},\,{\rm d}\nu_{\beta_1})\hookrightarrow H^1(\omega_{0},\,{\rm d}\nu_{\beta_2}).$$
Then, it follows from Lemma \ref{lem:same_sobo} that for every $\beta\leqslant -2$, we have $H^1(\omega_{0},\,{\rm d}\nu_{\beta})=H^1(\omega_{0},{\rm d}m)$. 
\end{rem}
The next result shows in particular that the average of functions of $H^1(\omega_{0},dm)$ can be considered, and therefore the space $H_N^1(\omega_{\varepsilon},dm)$ introduced in \eqref{eq:defH1N} is well defined for $\varepsilon=0$.
\begin{lemma}
\label{lem:conv_l1}
For every $\beta<-3/2$, we have the following continuous embedding:
\begin{equation}
\label{embedd}
L^2(\omega_{0},{\rm d}m)\hookrightarrow L^1(\omega_{0},\,{\rm d}\nu_{\beta})\quad\text{and}\quad L^2(\gamma_{0},\,{\rm d}\sigma)\hookrightarrow L^1(\gamma_{0},\,{\rm d}\nu^S_{\beta}).
\end{equation}
\end{lemma}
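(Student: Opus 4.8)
The plan is to reduce both embeddings to a one-dimensional weight computation via the Cauchy--Schwarz inequality, after splitting each domain into its fixed (bounded) part and its semi-infinite part. Recall that here $\mathrm{d}m=\mathrm{d}\nu_{-2}$ and $\mathrm{d}\sigma=\mathrm{d}\nu^S_{-2}$, so on $R_0$ we have $\mathrm{d}m=(1+x_1)^{-2}\,\mathrm{d}x$, while on the fixed domain $D$ both $\mathrm{d}m$ and $\mathrm{d}\nu_\beta$ coincide with the Lebesgue measure $\mathrm{d}x$. On the bounded piece $D$ the classical embedding $L^2(D)\hookrightarrow L^1(D)$ applies with constant $|D|^{1/2}$, so this part contributes nothing; the same remark disposes of the compact arc $\gamma_0^D$ for the boundary statement. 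Hence the whole content of the lemma lives on the semi-infinite rectangle and its upper edge.

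The heart of the argument is thus the rectangle $R_0=]0,+\infty[\times\,]0,1[$. For $u\in L^2(\omega_0,\mathrm{d}m)$ I would split the weight as
$$
\int_{R_0}|u|\,\mathrm{d}\nu_\beta=\int_{R_0}\bigl[|u|(1+x_1)^{-1}\bigr]\,(1+x_1)^{\beta+1}\,\mathrm{d}x,
$$
and apply Cauchy--Schwarz to obtain
$$
\int_{R_0}|u|\,\mathrm{d}\nu_\beta\leqslant\left(\int_{R_0}u^2(1+x_1)^{-2}\,\mathrm{d}x\right)^{1/2}\left(\int_{R_0}(1+x_1)^{2\beta+2}\,\mathrm{d}x\right)^{1/2}.
$$
The first factor is exactly $\|u\|_{L^2(R_0,\mathrm{d}m)}$. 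The second factor reduces to $\int_0^{+\infty}(1+x_1)^{2\beta+2}\,\mathrm{d}x_1$ (the $x_2$-integral over $]0,1[$ being trivial), which is finite if and only if $2\beta+2<-1$, i.e. $\beta<-3/2$. This is precisely the hypothesis, and it identifies the sharp threshold; it yields the first embedding with a constant depending only on $\beta$.

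For the boundary embedding I would repeat this computation verbatim on $\gamma_0^R=\{(x_1,1)\,:\,0<x_1<+\infty\}$, where $\mathrm{d}\sigma=(1+x_1)^{-2}\,\mathrm{d}x_1$ and $\mathrm{d}\nu^S_\beta=(1+x_1)^{\beta}\,\mathrm{d}x_1$, simply replacing the area integral by the line integral in $x_1$; the convergence condition $\beta<-3/2$ is identical. I do not anticipate any serious obstacle here: the only point requiring a little care is choosing the split of the weight so that $u^2\,\mathrm{d}m$ is exactly reconstructed inside the Cauchy--Schwarz estimate, after which one merely reads off that $\beta<-3/2$ is the exponent making the residual weight $\int_0^{+\infty}(1+x_1)^{2\beta+2}\,\mathrm{d}x_1$ integrable.
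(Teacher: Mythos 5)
Your proposal is correct and follows essentially the same argument as the paper: the same splitting of the weight as $(1+x_1)^{-1}(1+x_1)^{\beta+1}$ followed by Cauchy--Schwarz, with the integrability of $\int_0^{+\infty}(1+x_1)^{2\beta+2}\,{\rm d}x_1$ giving the threshold $\beta<-3/2$, and the boundary case handled verbatim. The only (harmless) addition is your explicit treatment of the bounded piece $D$, which the paper leaves implicit.
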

\begin{proof}
One can simply observe that  for every $u$ in $L^2(\omega_{0},{\rm d}m)$, we have
\begin{align*}
\int_{R_{0}} |u(x)|(1+x_1)^\beta\,{\rm d}x&=\int_{R_{0}}|u(x)|(1+x_1)^{-1}(1+x_1)^{\beta+1}\,{\rm d}x\\
&\leqslant \|u\|_{L^2(R_{0},{\rm d}m)}\left(\int_{R_{0}} (1+x_1)^{2\beta+2}\,{\rm d}x\right)^{1/2}.
\end{align*}
The conclusion follows for the first embedding in \eqref{embedd}. The second embedding is proved exactly the same way. 
\end{proof}

In order to establish a convergence result as $\varepsilon$ tends to $0$, we need to be able to extend functions defined on $\omega_\varepsilon$ to $\omega_{0}$ in such a way that the extension operator be uniformly bounded with respect to $\varepsilon$. The following result provides the existence of such an operator.
\begin{lemma}[Extension operator]
\label{LEM:extension}
For every $\varepsilon>0$ small enough, there exists an extension operator:
$$T_\varepsilon:H^1(\omega_\varepsilon,{\rm d}m)\to H^1(\omega_{0},{\rm d}m),$$
such that $$\|T_\varepsilon\,u\|_{H^1(\omega_{0},{\rm d}m)}\leqslant \sqrt{3}\|u\|_{H^1(\omega_\varepsilon,{\rm d}m)}.$$
\end{lemma}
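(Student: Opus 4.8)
The plan is to construct $T_\varepsilon$ explicitly by an even reflection across the right edge $\{x_1=\ell_\varepsilon\}$ of the strip, damped by a carefully tuned cut-off so that it can be continued by zero. Since $D$ is common to all the $\omega_\varepsilon$ and $\omega_0\setminus\omega_\varepsilon=\,]\ell_\varepsilon,+\infty[\,\times\,]0,1[$, only this semi-infinite strip has to be filled. I set $T_\varepsilon u:=u$ on $\omega_\varepsilon$,
\[
(T_\varepsilon u)(x_1,x_2):=\eta(x_1)\,u(2\ell_\varepsilon-x_1,x_2),\qquad \ell_\varepsilon<x_1<2\ell_\varepsilon,
\]
and $T_\varepsilon u:=0$ for $x_1\geqslant 2\ell_\varepsilon$, where $\eta(x_1):=1-\big((x_1-\ell_\varepsilon)/\ell_\varepsilon\big)^3$. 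As $\eta(\ell_\varepsilon)=1$ and $\eta(2\ell_\varepsilon)=0$, the traces match at both interfaces $x_1=\ell_\varepsilon$ and $x_1=2\ell_\varepsilon$, so $T_\varepsilon u\in H^1(\omega_0,{\rm d}m)$ with $T_\varepsilon u|_{\omega_\varepsilon}=u$. It then remains to bound the contribution of the reflected part $v:=\eta\,w$, with $w(x_1,x_2):=u(2\ell_\varepsilon-x_1,x_2)$, in the three pieces of the $H^1(\omega_0,{\rm d}m)$ norm.

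For the $L^2({\rm d}m)$ and the $\partial_{x_2}$ pieces I would use the change of variable $y_1=2\ell_\varepsilon-x_1$. Since $0\leqslant\eta\leqslant 1$ and $2\ell_\varepsilon-y_1\geqslant y_1$ forces $(1+2\ell_\varepsilon-y_1)^{-2}\leqslant(1+y_1)^{-2}$, the decay of the weight yields at once $\|v\|^2_{L^2({\rm d}m)}\leqslant\|u\|^2_{L^2(R_\varepsilon,{\rm d}m)}$ and $\|\partial_{x_2}v\|^2_{L^2}\leqslant\|\partial_{x_2}u\|^2_{L^2(R_\varepsilon)}$. The delicate piece is $\partial_{x_1}v=\eta'w+\eta\,\partial_{x_1}w$. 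Expanding the square and integrating the cross term $\int\eta\eta'\,\partial_{x_1}(w^2)$ by parts in $x_1$ produces
\[
\int_{\ell_\varepsilon}^{2\ell_\varepsilon}\!\!\int_0^1|\partial_{x_1}v|^2 = -\eta'(\ell_\varepsilon)\!\int_0^1\! w(\ell_\varepsilon,x_2)^2\,{\rm d}x_2 - \int\eta\eta''\,w^2 + \int\eta^2|\partial_{x_1}w|^2,
\]
in which the two $(\eta')^2$ contributions cancel.

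The boundary term carried by $\eta'(\ell_\varepsilon)$ is a trace of $u$ on $\{x_1=\ell_\varepsilon\}$ that cannot be controlled by the (only weakly weighted) energy; it vanishes precisely because the cubic choice gives $\eta'(\ell_\varepsilon)=0$, and this is the reason for that specific cut-off. The last term is $\leqslant\|\partial_{x_1}u\|^2_{L^2(R_\varepsilon)}$ since $\eta\leqslant 1$, while the middle one is handled by the pointwise bound $-\eta(x_1)\eta''(x_1)\leqslant(1+2\ell_\varepsilon-x_1)^{-2}$, which after the change of variable gives $-\int\eta\eta''w^2\leqslant\|u\|^2_{L^2(R_\varepsilon,{\rm d}m)}$. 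Collecting the three pieces, the reflected part contributes at most $2\|u\|^2_{L^2(R_\varepsilon,{\rm d}m)}+\|\nabla u\|^2_{L^2(R_\varepsilon)}\leqslant 2\|u\|^2_{H^1(\omega_\varepsilon,{\rm d}m)}$, whence $\|T_\varepsilon u\|^2_{H^1(\omega_0,{\rm d}m)}\leqslant 3\|u\|^2_{H^1(\omega_\varepsilon,{\rm d}m)}$, which is the claimed constant $\sqrt3$.

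The main obstacle is the $\partial_{x_1}$ estimate, and within it the verification of the weighted concavity inequality $-\eta\eta''\leqslant(1+2\ell_\varepsilon-\cdot)^{-2}$. Writing $t=(x_1-\ell_\varepsilon)/\ell_\varepsilon$, it amounts to $6\,t(1-t^3)\,\ell_\varepsilon^{-2}(1+\ell_\varepsilon(1-t))^2\leqslant 1$; as $\ell_\varepsilon\to+\infty$ the left-hand side converges uniformly to $6\,t(1-t^3)(1-t)^2$, whose maximum over $]0,1[$ is strictly below $1$. This forces the restriction ``for every $\varepsilon>0$ small enough'' (equivalently $\ell_\varepsilon$ large, by \eqref{comp_l_eps}), exactly as in the statement. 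The other two pieces, and the continuity of $T_\varepsilon u$ across the interfaces, are routine.
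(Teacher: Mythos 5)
Your construction is correct, and it follows the same skeleton as the paper's proof — extend by the reflection $u(2\ell_\varepsilon-x_1,x_2)$ multiplied by a cut-off that vanishes before the reflected data is exhausted — but the technical execution is genuinely different. The paper cuts off over the shorter interval $[\ell_\varepsilon,\ell_\varepsilon^\ast]$ with a logarithmic profile engineered so that $|\chi_\varepsilon'(2\ell_\varepsilon-x_1)|^2=(1+x_1)^{-2}$ exactly; it then expands $|\partial_{x_1}(\chi_\varepsilon w)|^2$ by the crude inequality $(a+b)^2\leqslant 2a^2+2b^2$, so the term $|\chi_\varepsilon'|^2w^2$ is converted directly into $\|u\|^2_{L^2({\rm d}m)}$ after the change of variables. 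You instead take a cubic cut-off over the full doubled interval $[\ell_\varepsilon,2\ell_\varepsilon]$ and remove the cross term by integrating $\eta\eta'\partial_{x_1}(w^2)$ by parts, which trades the $(\eta')^2$ term for $-\eta\eta''$; the two key points — that the boundary trace at $x_1=\ell_\varepsilon$ (which is not uniformly controllable by the ${\rm d}m$-weighted norm) is killed by $\eta'(\ell_\varepsilon)=0$, and that $-\eta\eta''$ is dominated pointwise by the reflected weight once $\ell_\varepsilon$ is large — are both verified correctly, and the numerical check $\max_{[0,1]}6t(1-t^3)(1-t)^2<1$ is right (the maximum is about $0.86$). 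What each approach buys: the paper's Cauchy--Schwarz route is shorter and avoids any integration by parts (hence any density/trace discussion for $w^2\in W^{1,1}$), while your route avoids the factor $2$ from $(a+b)^2\leqslant 2a^2+2b^2$ and therefore yields the constant $\sqrt3$ by transparent bookkeeping ($2\|u\|^2_{L^2({\rm d}m)}+\|\nabla u\|^2$ for the reflected part); both require $\varepsilon$ small, yours for the pointwise inequality on $-\eta\eta''$, the paper's to ensure $\ell_\varepsilon^->0$. No gap.
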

\begin{proof}
Since $D\subset \omega_\varepsilon$ for every $\varepsilon\geqslant 0$, it is sufficient to define  $R_\varepsilon$ from $H^1(R_\varepsilon,{\rm d}m)$ to $H^1(R_{0},{\rm d}m)$.

For every $\varepsilon>0$, set
$$
\ell_\varepsilon^\ast:=\ell_\varepsilon(2-e^{-1})+(1-e^{-1}),
$$
and let $\chi_\varepsilon$ be the cutt-off function defined in $R_0$ as follows:
$$\chi_\varepsilon(x)=
\begin{cases}
1&\text{if }x_1<\ell_\varepsilon\\
\displaystyle  1+\ln\left(1+\frac{\ell_\varepsilon-x_1}{\ell_\varepsilon+1}\right)&\text{if }\ell_\varepsilon\leqslant x_1<\ell_\varepsilon^\ast\\
0&\text{if }\ell_\varepsilon^\ast\leqslant x_1.
\end{cases}$$
Note that $0\leqslant \chi_\varepsilon(x)\leqslant 1$ for  every $x\in R_0$. Moreover, for $\varepsilon>0$ small enough, the quantity $\ell_\varepsilon^-:=2\ell_\varepsilon-\ell_\varepsilon^\ast$ is positive and for every 
$x\in R_0$ such that $x_1>\ell_\varepsilon^-$ we have:
\begin{equation}
\label{eq:propchi}
|\chi_\varepsilon'(2\ell_\varepsilon-x_{1})|^{2} =m(x_{1})=(1+x_{1})^{-2}.
 \end{equation}
For every $u\in H^1(R_\varepsilon,{\rm d}m)$, we define $T_\varepsilon u$ in $R_0$ as follows:
$$(T_\varepsilon u)(x)=
\begin{cases}
u(x_1,x_2)&\text{if }x_1\leqslant \ell_\varepsilon\\
\chi_\varepsilon(x_1)u(2\ell_\varepsilon-x_1,x_2)&\text{if }\ell_\varepsilon\leqslant x_1<\ell_\varepsilon^\ast\\
0&\text{if }\ell_\varepsilon^\ast\leqslant x_1.
\end{cases}$$
It can be easily verified that $T_\varepsilon u\in H^1_{\rm loc}(R_0)$. Moreover, we have:
\begin{multline*}
\int_{R_{0}}|\nabla (T_\varepsilon u)|^2\,{\rm d}x=\int_{R_\varepsilon}|\nabla u_\varepsilon|^2\,{\rm d}x\\\
\hspace{3cm}+\int_{R_{0}\cap\{\ell_\varepsilon<x_1<\ell_\varepsilon^\ast\}}
\left|\partial_{x_1}\chi_\varepsilon(x)u(2\ell_\varepsilon-x_1,x_2)-\chi_\varepsilon(x)\partial_{x_1}
u(2\ell_\varepsilon-x_1,x_2)\right|^2\,{\rm d}x\\
+\int_{R_{0}\cap\{\ell_\varepsilon<x_1<\ell_\varepsilon^\ast\}}
|\partial_{x_2}u(2\ell_\varepsilon-x_1,x_2)|^2\,{\rm d}x.\hspace{3cm}
\end{multline*}
Applying the change of variables $x_1'=2\ell_\varepsilon-x_1$ in the last two integrals of the right hand side and using Cauchy-Schwarz inequality, we get by using \eqref{eq:propchi} that:
$$\int_{R_{0}}|\nabla (T_\varepsilon u)|^2\,{\rm d}x\leqslant \int_{R_\varepsilon}|\nabla u_\varepsilon|^2\,{\rm d}x
+2\int_{R_{0}\cap\{\ell_\varepsilon^-<x_1<\ell_\varepsilon\}}u^2\,{\rm d}m+2\int_{R_{0}\cap\{\ell_\varepsilon^-<x_1<\ell_\varepsilon\}}|\nabla u|^2\,{\rm d}x.
$$
Consequently
\begin{equation}
\label{eq:inegnablaT}
\int_{R_{0}}|\nabla (T_\varepsilon u)|^2\,{\rm d}x\leqslant 3\|u\|_{H^1(R_\varepsilon,{\rm d}m)}^2.
\end{equation}
On the other hand, we also have:
\begin{align*}
\int_{R_{0}}|T_\varepsilon u|^2\,{\rm d}m&\leqslant \int_{R_\varepsilon}|u_\varepsilon|^2\,{\rm d}m+
\int_{R_{0}\cap\{\ell_\varepsilon^-<x_1<\ell_\varepsilon\}}|\chi_\varepsilon(2\ell_\varepsilon-x_1,x_2)|^2|u(x)|^2\,{\rm d}m\\
&\leqslant 2\|u\|_{L^2(R_\varepsilon,{\rm d}m)}^2.
\end{align*}
The announced estimate follows then immediately by combining \eqref{eq:inegnablaT} and the last inequality.
\end{proof}

\begin{lemma}[Uniform trace mapping]
\label{lem:trace}
Let $\mathcal E(\omega_0)$ be the functional space defined by \eqref{eq:Eomega0}. Then, the mapping
$$u\in\mathcal E(\omega_0)\mapsto u|_{\gamma_{0}}\in L^2(\gamma_{0},\,{\rm d}\sigma),$$
can be uniquely extended as a linear continuous operator:
$$\Lambda_0:u\in H^1(\omega_{0},{\rm d}m)\to L^2(\gamma_{0},\,{\rm d}\sigma).$$
Moreover, denoting by $\Lambda_\varepsilon$ the usual trace mapping from $H^1(\omega_\varepsilon)$ into $L^2(\gamma_\varepsilon)$ for $\varepsilon>0$, there exists a constant $C>0$ (independent of $\varepsilon\geqslant 0$) such that, 
for every $\varepsilon\geqslant 0$:
\begin{equation}
\label{unif_trace}
\|\Lambda_\varepsilon(u)\|_{L^2(\gamma_\varepsilon,{\rm d}\sigma)}\leqslant C\|u\|_{H^1(\omega_\varepsilon,{\rm d}m)}.
\end{equation}
\end{lemma}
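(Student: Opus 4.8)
The plan is to split the trace into its two natural pieces, the part $\gamma^D_\varepsilon$ lying in the fixed domain $D$ and the part $\gamma^R_\varepsilon=\{(x_1,1):0<x_1<\ell_\varepsilon\}$ on the top of the rectangle $R_\varepsilon$, to prove the uniform bound \eqref{unif_trace} first for smooth functions, and then to pass to $H^1(\omega_\varepsilon,{\rm d}m)$ by density. On $\gamma^D_\varepsilon$ the measures ${\rm d}m$ and ${\rm d}\sigma$ reduce to the Lebesgue measures, and $\gamma^D_\varepsilon$ stays within the boundary of the fixed bounded Lipschitz domain $D$; the classical trace theorem on $D$ therefore gives $\|u\|_{L^2(\gamma^D_\varepsilon)}\leqslant C_D\|u\|_{H^1(D)}$ with $C_D$ independent of $\varepsilon$. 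The entire difficulty, and the very reason the weight ${\rm d}m=(1+x_1)^{-2}{\rm d}x$ was introduced, concentrates on $\gamma^R_\varepsilon$.

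For the rectangle part I would argue pointwise in $x_1$. For $u\in C^1(\overline{R}_\varepsilon)$ and $x_2\in(0,1)$, the identity
\[
u(x_1,1)^2=u(x_1,x_2)^2+2\int_{x_2}^1 u(x_1,t)\,\partial_{x_2}u(x_1,t)\,{\rm d}t,
\]
integrated over $x_2\in(0,1)$ and combined with Young's inequality $2ab\leqslant a^2+b^2$, yields
\[
u(x_1,1)^2\leqslant 2\int_0^1 u(x_1,x_2)^2\,{\rm d}x_2+\int_0^1|\partial_{x_2}u(x_1,x_2)|^2\,{\rm d}x_2.
\]
The key step is then to multiply this pointwise bound by the boundary weight $(1+x_1)^{-2}$ and integrate over $x_1\in(0,\ell_\varepsilon)$: the first term reproduces \emph{exactly} $2\|u\|^2_{L^2(R_\varepsilon,{\rm d}m)}$, while in the second term one merely uses $(1+x_1)^{-2}\leqslant 1$ to dominate it by the unweighted $\|\partial_{x_2}u\|^2_{L^2(R_\varepsilon)}\leqslant\|\nabla u\|^2_{L^2(R_\varepsilon)}$. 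This gives
\[
\|u\|^2_{L^2(\gamma^R_\varepsilon,{\rm d}\sigma)}\leqslant 2\|u\|^2_{L^2(R_\varepsilon,{\rm d}m)}+\|\nabla u\|^2_{L^2(R_\varepsilon)}\leqslant 2\|u\|^2_{H^1(R_\varepsilon,{\rm d}m)},
\]
with a constant manifestly independent of $\ell_\varepsilon$, hence of $\varepsilon$. Adding the contribution of $D$ establishes \eqref{unif_trace} for smooth $u$ and every $\varepsilon\geqslant 0$.

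It then remains to pass to the functional spaces. For $\varepsilon>0$ the domain $\omega_\varepsilon$ is bounded and Lipschitz, so $C^\infty(\overline{\omega}_\varepsilon)$ is dense in $H^1(\omega_\varepsilon)=H^1(\omega_\varepsilon,{\rm d}m)$ and the estimate extends to the usual trace $\Lambda_\varepsilon$. For $\varepsilon=0$, every element of $\mathcal E(\omega_0)$ is the restriction of a $C^\infty_0(\mathbb R^2)$ function and thus has a classical trace, so the bound holds on $\mathcal E(\omega_0)$; since $\mathcal E(\omega_0)$ is dense in $H^1(\omega_0,{\rm d}m)$ by Lemma~\ref{lem:density}, the map extends uniquely to a continuous operator $\Lambda_0$ obeying the same estimate, uniqueness being itself a consequence of density.

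I expect the only genuinely delicate point to be the case $\varepsilon=0$: there $\omega_0$ is unbounded and no classical trace theorem is available, so the very existence and well-definedness of $\Lambda_0$ rest entirely on Lemma~\ref{lem:density} together with the stability of an estimate proved on $\mathcal E(\omega_0)$ with an $\varepsilon$-independent constant. The computation on the rectangle is elementary; its whole force comes from the exact matching between the bulk weight $(1+x_1)^{-2}$ and the boundary weight $(1+x_1)^{-2}$, which is precisely what keeps the constant uniform as $\ell_\varepsilon\to+\infty$.
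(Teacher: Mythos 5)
Your proof is correct, and the core of it is the same as the paper's: a one-dimensional fundamental-theorem-of-calculus identity in $x_2$ on the strip, followed by multiplication by the weight $(1+x_1)^{-2}$, the exact matching between the bulk weight ${\rm d}m$ and the boundary weight ${\rm d}\sigma$ doing all the work, and then density ($\mathcal E(\omega_0)$ via Lemma~\ref{lem:density} for $\varepsilon=0$). The one genuine difference is how the uniformity in $\varepsilon$ is obtained: you run the computation directly on each finite strip $R_\varepsilon$ and observe that the constant never sees $\ell_\varepsilon$, whereas the paper proves the estimate only for $\varepsilon=0$ and then deduces \eqref{unif_trace} for $\varepsilon>0$ by composing with the uniformly bounded extension operator $T_\varepsilon$ of Lemma~\ref{LEM:extension}. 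Your route is slightly more elementary and self-contained on this point (it does not invoke $T_\varepsilon$ at all); the paper's route buys economy, since the extension operator is needed elsewhere anyway and lets the trace inequality be proved once, on the single domain $\omega_0$. A cosmetic difference: the paper uses the identity $|u(x_1,1)|^2=\int_0^1\frac{\rm d}{{\rm d}s}\bigl(|u(x_1,s)|^2 s\bigr)\,{\rm d}s$ and a global Cauchy--Schwarz with the weight $\nu_{-4}$ (absorbed via Remark~\ref{rem:1}), while you integrate the pointwise identity over $x_2$ and use Young's inequality; both yield the same $\varepsilon$-independent bound.
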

\begin{proof}
Classically, it is sufficient to prove the existence of $\Lambda_0$ defined as an application from $H^1(R_0,{\rm d}m)$ into $L^2(\gamma^R_0,{\rm d}\sigma)$. For every $u\in\mathcal E(R_0)$, we have:
\begin{align*}|u(x_1,1)|^2&=\int_0^1\frac{\,{\rm d}}{\,{\rm d}s}(|u(x_1,s)|^2s){\rm
d}s\\
&=2\int_0^1\partial_{x_2}u(x_1,s)u(x_1,s)s{\rm
d}s+\int_0^1|u(x_1,s)|^2\,{\rm d}s.
\end{align*}
Multiplying both sides of this equality by $(1+x_1)^{-2}$, integrating from $0$ to $+\infty$ with respect to $x_1$ and using Cauchy-Schwarz inequality, we get:
$$\int_{\gamma^R_{0}}u^2(x_1,1)\,{\rm d}\sigma\leqslant 2\left(\int_{R_{0}} |\partial_{x_2}u|^2\,{\rm d}x\right)^{1/2}\left(\int_{R_{0}} u^2\,{\rm d}\nu_\beta\right)^{1/2}+\int_{R_{0}} u^2\,{\rm d}m,$$
where $\beta=-4$. According to Remark~\ref{rem:1}, we deduce that there exists a constant $C>0$ such that:
$$\int_{\gamma^R_{0}}u^2(x_1,1)\,{\rm d}\sigma\leqslant C\|u\|^2_{H^1(R_{0},{\rm d}m)}.$$ 
We conclude to the existence of $\Lambda_0$ by recalling the density of $\mathcal E(\omega_{0})$ into $H^1(\omega_{0},{\rm d}m)$ proved in Lemma \ref{lem:density}.

To get the uniform estimate \eqref{unif_trace}, we write that, for every $u\in H^1(\omega_\varepsilon)$ and using Lemma~\ref{LEM:extension}:
$$\|\Lambda_\varepsilon(u)\|_{L^2(\gamma_\varepsilon,{\rm d}\sigma)}\leqslant \|\Lambda_0(T_\varepsilon u)\|_{L^2(\gamma_0,{\rm d}\sigma)}\leqslant \tilde C\|T_\varepsilon u\|_{H^1(\omega_0,{\rm d}m)}
\leqslant C\|u_\varepsilon\|_{H^1(\omega_\varepsilon,{\rm d}m)},$$
where $\tilde C$ and $C$ are positive constants independent of $\varepsilon\geqslant 0$. The proof is now complete.
\end{proof}

\begin{lemma}[Poincar\'e-Wirtinger inequality]
\label{lem:poincare}
There exists a constant $C>0$ (independent of $\varepsilon$) such that for every $\varepsilon\geqslant 0$: 
\begin{equation}
\label{eq:poincare}
\|u\|_{L^2(\omega_\varepsilon,{\rm d}m)}\leqslant C\|\nabla u\|_{L^2(\omega_\varepsilon)},\quad\forall\,u\in H^1_N(\omega_\varepsilon,{\rm d}m).
\end{equation}
\end{lemma}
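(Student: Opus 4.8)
The plan is to reduce the inequality to estimating $\|u-c\|_{L^2(\omega_\varepsilon,{\rm d}m)}$ for a single, carefully chosen constant $c$. Indeed, since ${\rm d}m$ is a \emph{finite} measure on $\omega_\varepsilon$ for every $\varepsilon\geqslant 0$ (one has $\int_{R_0}(1+x_1)^{-2}\,{\rm d}x=1$), the map $c\mapsto\int_{\omega_\varepsilon}|u-c|^2\,{\rm d}m$ is minimized at the ${\rm d}m$-average of $u$, which vanishes precisely because $u\in H^1_N(\omega_\varepsilon,{\rm d}m)$. Hence $\|u\|_{L^2(\omega_\varepsilon,{\rm d}m)}=\inf_{c\in\mathbb R}\|u-c\|_{L^2(\omega_\varepsilon,{\rm d}m)}\leqslant\|u-c\|_{L^2(\omega_\varepsilon,{\rm d}m)}$ for every $c$, so it suffices to exhibit one constant for which the right-hand side is controlled by $\|\nabla u\|_{L^2(\omega_\varepsilon)}$ uniformly in $\varepsilon$. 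I will take $c:=\int_0^1 u(0,x_2)\,{\rm d}x_2$, the average of (the trace of) $u$ over the interface segment $\Sigma:=\{0\}\times{]0,1[}$ separating $D$ from $R_\varepsilon$. By density (Lemma~\ref{lem:density} for $\varepsilon=0$, and smoothness for $\varepsilon>0$) all computations below may be carried out for smooth $u$ and then passed to the limit.

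On the rectangle $R_\varepsilon=]0,\ell_\varepsilon[\times]0,1[$ I would split $u-c=(u-\phi)+(\phi-c)$, where $\phi(x_1):=\int_0^1 u(x_1,x_2)\,{\rm d}x_2$ is the vertical average. Since $u(x_1,\cdot)-\phi(x_1)$ has zero mean on $]0,1[$, the fixed one-dimensional Poincaré inequality gives $\int_0^1|u-\phi|^2\,{\rm d}x_2\leqslant C\int_0^1|\partial_{x_2}u|^2\,{\rm d}x_2$ for each $x_1$; integrating against $(1+x_1)^{-2}\,{\rm d}x_1\leqslant{\rm d}x_1$ controls $\|u-\phi\|_{L^2(R_\varepsilon,{\rm d}m)}$ by $\|\partial_{x_2}u\|_{L^2(R_\varepsilon)}$. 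For the horizontal part, writing $\psi:=\phi-\phi(0)$ so that $\psi(0)=0$, the crucial point is the weighted one-dimensional Hardy inequality $\int_0^{\ell_\varepsilon}|\psi|^2(1+x_1)^{-2}\,{\rm d}x_1\leqslant\int_0^{\ell_\varepsilon}|\psi|^2 x_1^{-2}\,{\rm d}x_1\leqslant 4\int_0^{\ell_\varepsilon}|\psi'|^2\,{\rm d}x_1$, whose constant is independent of $\ell_\varepsilon$; combined with $|\psi'(x_1)|^2=|\int_0^1\partial_{x_1}u\,{\rm d}x_2|^2\leqslant\int_0^1|\partial_{x_1}u|^2\,{\rm d}x_2$ this bounds $\|\phi-c\|_{L^2(R_\varepsilon,{\rm d}m)}$ by $\|\partial_{x_1}u\|_{L^2(R_\varepsilon)}$. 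Altogether $\|u-c\|_{L^2(R_\varepsilon,{\rm d}m)}\leqslant C\|\nabla u\|_{L^2(R_\varepsilon)}$ with $C$ independent of $\varepsilon$.

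On the fixed bounded domain $D$, where ${\rm d}m={\rm d}x$, I would use the standard Poincaré–Wirtinger inequality $\|u-\langle u\rangle_D\|_{L^2(D)}\leqslant C_D\|\nabla u\|_{L^2(D)}$ together with the trace inequality on $D$ to compare the interface average $c=\langle u\rangle_\Sigma$ with the bulk average $\langle u\rangle_D$: since $\Sigma\subset\partial D$, one has $|\langle u\rangle_\Sigma-\langle u\rangle_D|\leqslant C\|u-\langle u\rangle_D\|_{L^2(\Sigma)}\leqslant C\|u-\langle u\rangle_D\|_{H^1(D)}\leqslant C_D\|\nabla u\|_{L^2(D)}$. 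Hence $\|u-c\|_{L^2(D)}\leqslant\|u-\langle u\rangle_D\|_{L^2(D)}+|D|^{1/2}|\langle u\rangle_D-c|\leqslant C_D\|\nabla u\|_{L^2(D)}$. Summing the $D$- and $R_\varepsilon$-estimates gives $\|u-c\|_{L^2(\omega_\varepsilon,{\rm d}m)}\leqslant C\|\nabla u\|_{L^2(\omega_\varepsilon)}$ with $C$ independent of $\varepsilon$, and the reduction of the first paragraph then yields \eqref{eq:poincare}.

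The main obstacle — and the whole reason the constant can be taken uniform in $\varepsilon$, including the unbounded case $\varepsilon=0$ — is the horizontal estimate on $R_\varepsilon$. A plain Poincaré inequality on the thin rectangle would produce a constant degenerating as $\ell_\varepsilon\to+\infty$; it is precisely the decaying weight $(1+x_1)^{-2}$ of ${\rm d}m$, exploited through the Hardy inequality anchored at $x_1=0$, that compensates the growth of $\ell_\varepsilon$ and delivers an $\varepsilon$-independent bound. The remaining technical care lies in choosing $c$ as the interface average, so that the $D$- and $R_\varepsilon$-pieces glue around a common reference value; this is what the trace comparison on $D$ ensures.
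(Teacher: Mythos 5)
Your proof is correct, but it follows a genuinely different route from the paper's. The paper first establishes the weighted inequality \eqref{eq:poincarerectangle} on the strip $R_\varepsilon$ alone by a direct computation (writing $u(x)-u(x')$ as line integrals of $\nabla u$, integrating against the weight and applying Fubini and Cauchy--Schwarz, which yields the explicit constant $\sqrt{2}$), and then glues $D$ and $R_\varepsilon$ together by a compactness--contradiction argument: a normalized sequence $u_n$ with vanishing gradients is shown to converge to a constant $U_R$ on the strips and to a constant $U_D$ on $D$ (via Rellich), trace continuity across the interface forces $U_R=U_D$, and the zero-mean condition forces this common value to vanish, contradicting the normalization. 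You instead make the gluing entirely constructive: you reduce to bounding $\|u-c\|_{L^2(\omega_\varepsilon,{\rm d}m)}$ for the single interface average $c=\int_0^1 u(0,x_2)\,{\rm d}x_2$ (legitimate since the ${\rm d}m$-average minimizes and vanishes on $H^1_N$), handle the strip by the vertical-average decomposition plus the one-dimensional Hardy inequality anchored at $x_1=0$ (whose constant $4$ is independent of $\ell_\varepsilon$), and handle $D$ by the standard Poincar\'e--Wirtinger and trace inequalities on the fixed domain $D$; the density Lemma~\ref{lem:density} justifies working with smooth functions when $\varepsilon=0$, exactly as in the paper. What your approach buys is an explicit, quantitative constant for the whole of $\omega_\varepsilon$ and the elimination of the compactness step; what the paper's approach buys is that the delicate part of the computation is confined to the rectangle, at the price of a non-effective final constant. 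Both correctly identify the decaying weight as the mechanism that prevents the constant from degenerating as $\ell_\varepsilon\to+\infty$.
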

\begin{proof}
The result is proved in two steps. Using a direct calculation, we first show that this inequality holds on the (finite or semi-infinite) strip  $R_\varepsilon:=]0,\ell_\varepsilon[\times ]0,1[$ with $C=\sqrt{2} $:   
\begin{equation}
\label{eq:poincarerectangle}
\|u\|_{L^2(R_\varepsilon,{\rm d}m)}\leqslant \sqrt{2}\|\nabla u\|_{L^2(R_\varepsilon)},\quad\forall\,u\in H^1_N(R_\varepsilon,{\rm d}m).
\end{equation}
Next, we prove by contradiction that inequality \eqref{eq:poincarerectangle} implies \eqref{eq:poincare}. \\

Let $u$ be a function in $C^1(\overline{R_\varepsilon})$ or $\mathcal E(R_0)$ if $\varepsilon=0$. For every $x=(x_1,x_2)$ and $x'=(x_1',x_2')$ in $R_\varepsilon$, we have:
\begin{align*}
u(x_1,x_2)-u(x_1',x_2')&=u(x_1,x_2)-u(x_1',x_2)+u(x_1',x_2)-u(x_1',x_2')\\
&=\int_{x_1'}^{x_1}\partial_{x_1}u(s,x_2)\,{\rm d}s+\int_{x_2'}^{x_2}\partial_{x_2}u(x_1',s)\,{\rm d}s.
\end{align*}
Multiplying by $\left[(1+x_1)^{2}m(R_\varepsilon)\right]^{-1}$ and integrating the last equation with respect to $x_1$ from $0$ to $\ell_\varepsilon$, we get:
\begin{multline}
\label{eq:Poinc_wirt_main}
\frac{1}{m(R_\varepsilon)}\int_0^{\ell_\varepsilon}(1+x_1)^{-2}u(x_1,x_2)\,{\rm d}x_1-u(x_1',x_2')=\\
\frac{1}{m(R_\varepsilon)}\int_0^{\ell_\varepsilon}\left(\int_{x_1'}^{x_1}\partial_{x_1}u(s,x_2)\,{\rm d}s\right)(1+x_1)^{-2}\,{\rm d}x_1
+\int_{x_2'}^{x_2}\partial_{x_2}u(x_1',s)\,{\rm d}s.
\end{multline}
Applying Fubini's theorem to the first term of the right hand side, we get:
$$\int_0^{\ell_\varepsilon}\left(\int_{x_1'}^{x_1}\partial_{x_1}u(s,x_2)\,{\rm d}s\right)(1+x_1)^{-2}\,{\rm d}x_1=
\int_{x_1'}^{\ell_\varepsilon}\partial_{x_1}u(s,x_2)\left(\int_s^{\ell_\varepsilon}(1+x_1)^{-2}\,{\rm d}x_1\right)\,{\rm d}s.$$
Integrating now \eqref{eq:Poinc_wirt_main} with respect to $x_2$ from 0 to 1, we deduce that:
$$
\left|u(x')-\frac{1}{m(R_\varepsilon)}\int_{R_\varepsilon} u(x)\,{\rm d}m\right|\leqslant \frac{1}{m(R_\varepsilon)}\int_{R_\varepsilon}|\partial_{x_1}u(s,x_{2})|(1+s)^{-1}\,{\rm d}s\,{\rm d}x_2
+
\int_{0}^{1}|\partial_{x_2}u(x_1',s)|\,{\rm d}s.
$$
According to Cauchy-Schwarz inequality, we get:
$$
\left|u(x')-\frac{1}{m(\omega_\varepsilon)}\int_{\omega_\varepsilon} u(x)\,{\rm d}m\right|^2\leqslant 2\left[\frac{1}{m(\omega_\varepsilon)}\|\partial_{x_1} u\|_{L^2(\omega_\varepsilon)}^2
+
\int_{0}^{1}|\partial_{x_2}u(x_1',s)|^2\,{\rm d}s\right],$$
and then, multiplying by $(x_1'+1)^{-2}$ and integrating with respect to $x'$ on $\omega_\varepsilon$, we obtain:
$$
\int_{\omega_\varepsilon}\left|u(x')-\frac{1}{m(\omega_\varepsilon)}\int_{\omega_\varepsilon} u(x)\,{\rm d}m\right|^2\!\!\!\,{\rm d}m(x')\\
\leqslant 2\left[\|\partial_{x_1} u\|_{L^2(\omega_\varepsilon)}^2+\|\partial_{x_2} u\|_{L^2(\omega_\varepsilon)}^2\right],
$$
which shows that \eqref{eq:poincarerectangle} holds true.

Now, we show by contradiction that \eqref{eq:poincare} also holds. If not, there would exist two sequences $(\varepsilon_n)_{n\geqslant 1}\searrow 0$ and $(u_n)_{n\geqslant 1}$, with $u_n\in H_N^1(\omega_n, \,{\rm d}m)$ (for the sake of clarity, we set $\omega_n:= \omega_{\varepsilon_n}$ throughout the proof), such that
\begin{subequations}
\begin{align}
\|u_n\|_{L^2(\omega_n,{\rm d}m)}&=1 \label{eq:unnorme1} \\
\|\nabla u_n\|_{L^2(\omega_n)}&\to 0\quad\text{as }n\to +\infty.\label{eq:gradun0}
\end{align}
\end{subequations}
On the one hand, setting $\overline{u_{n}}:=\frac{1}{m(R_{n})}\int_{R_{n}} u_{n} \,{\rm d}m$,
the function defined on $R_{n}:= R_{\varepsilon_n}$ by $  u_n - \overline{u_{n}}$ obviously satisfies $v_{n}\in H_N^1(R_n, \,{\rm d}m)$ and, thanks to \eqref{eq:poincarerectangle} and \eqref{eq:gradun0},
$$
\left\|u_n - \overline{u_{n}}\right\|_{L^2(R_n, \,{\rm d}m)}\leqslant \sqrt{2}\|\nabla u_n \|_{L^2(R_n)}\to 0\quad\text{as }n\to +\infty,
$$
and thus 
\begin{equation}
\label{eq:ununbarre}
\|u_{n}-\overline{u_{n}}\|_{H^1(R_n, \,{\rm d}m)}\to 0\quad\text{as }n\to +\infty.
\end{equation}
Moreover, using Cauchy-Schwarz inequality and \eqref{eq:unnorme1}, we have
$$
|\overline{u_{n}} |= \frac{1}{m(R_{n})}\int_{R_{n}} |u_{n}| \,{\rm d}m \leqslant \frac{1}{\sqrt{m(R_{n})}}
\leqslant \frac{1}{\sqrt{m(R_{1})}}<\infty.
$$
Consequently, there exists a constant $U_R\in \mathbb R$ such that the sequence of real numbers $(\overline{u_{n}})_{n \geqslant 1}$ converge (up to a subsequence) to $U_R$. According to \eqref{eq:ununbarre}, this shows that 
\begin{equation}
\label{eq:unlimR}
\|u_{n}-U_R\|_{H^1(R_n, \,{\rm d}m)}\to 0\quad\text{as }n\to +\infty.
\end{equation}
On the other hand, on the domain $D=\omega_{n}\setminus{R_{n}}$, we immediately get  from \eqref{eq:unnorme1} and \eqref{eq:gradun0}, using the compactness of the injection from $H^1(D)$ into $L^2(D)$, that (up to 
a subsequence) there exists a constant $U_D\in \mathbb R$ such that 
\begin{equation}
\label{eq:unlimD}
\|u_{n}-U_D\|_{H^1(D)}\to 0\quad\text{as }n\to +\infty.
\end{equation}
The continuity of the trace of $u_n\in H_N^1(\omega_n, \,{\rm d}m)$ through the interface $\partial D\cap \partial R_{n}$ implies that $U_R=U_D:=U$. Since $u_n\in H_N^1(\omega_n, \,{\rm d}m)$, this common value $U$ is necessarily zero, as
$$
0  = \int_{\omega_n}u_{n}\,{\rm d}m =  \int_{D}u_{n}\,{\rm d}m + \int_{R_n}u_{n}\,{\rm d}m  \to (m(D)+m(R_0))U\quad\text{as }n\to +\infty. 
$$
But this fact is on contradiction with \eqref{eq:unnorme1}, \eqref{eq:unlimR} and \eqref{eq:unlimD}.
\end{proof}

\section{Some abstract well-posedness and convergence results}
\label{sect:abstract}

We are now in position to prove the well-posedness of the Neumann problem in the unbounded domain $\omega_0$ and a convergence result as $\varepsilon$ tends to $0$. Applying Riesz  representation Theorem, we immediately get by Poincar\'e-Wirtinger inequality (see Lemma \ref{lem:poincare}) the following well-posedness and uniqueness result:
 \begin{theorem}
 \label{theo:1}
 For every $f_0\in L^2(\omega_0,{\rm d}m^{-1})$ and every $g_0\in L^2(\gamma_0,\,{\rm d}\sigma^{-1})$ satisfying the compatibility condition
 $$\int_{\omega_0}f_0\,{\rm d}x+\int_{\gamma_0}g_0\,{\rm d}s=0,$$
 there exists a unique finite energy solution $u_0\in H^1_N(\omega_0,{\rm d}m)$ (in the sense of Defintion~\ref{def:solution}) to Problem~\ref{eq:FVepsilon} when $\varepsilon=0$.
\end{theorem}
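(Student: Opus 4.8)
The plan is to recast Problem~\ref{eq:FVepsilon} with $\varepsilon=0$ as a variational problem on the Hilbert space $H^1_N(\omega_0,{\rm d}m)$ and to invoke the Riesz representation theorem. Since $\mathbb A_0$ is symmetric by its very definition \eqref{eq:defAepsilon}, the bilinear form $a(u,v):=\int_{\omega_0}\mathbb A_0\nabla u\cdot\nabla v\,{\rm d}x$ is symmetric, so it is enough to check that $a$ is continuous and coercive on $H^1_N(\omega_0,{\rm d}m)$ and that the right-hand side defines a continuous linear functional; Riesz then delivers existence and uniqueness of the finite energy solution $u_0$ in the sense of Definition~\ref{def:solution}.

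First I would establish the properties of $a$. Continuity is immediate from the upper bound in \eqref{estim_A}, which gives $|a(u,v)|\leqslant \lambda_2\|\nabla u\|_{L^2(\omega_0)}\|\nabla v\|_{L^2(\omega_0)}\leqslant \lambda_2\|u\|_{H^1(\omega_0,{\rm d}m)}\|v\|_{H^1(\omega_0,{\rm d}m)}$. Coercivity is the step where the unboundedness of $\omega_0$ genuinely enters: the lower bound in \eqref{estim_A} yields $a(u,u)\geqslant \lambda_1\|\nabla u\|_{L^2(\omega_0)}^2$, and the Poincar\'e--Wirtinger inequality \eqref{eq:poincare}, which holds precisely on the mean-zero space $H^1_N(\omega_0,{\rm d}m)$, upgrades this gradient control into control of the full weighted norm. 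Concretely, with $C$ the Poincar\'e constant of Lemma~\ref{lem:poincare} one gets $\|u\|_{H^1(\omega_0,{\rm d}m)}^2\leqslant(1+C^2)\|\nabla u\|_{L^2(\omega_0)}^2$, hence $a(u,u)\geqslant \frac{\lambda_1}{1+C^2}\|u\|_{H^1(\omega_0,{\rm d}m)}^2$.

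Next I would verify that $L(v):=\int_{\omega_0}f_0\,v\,{\rm d}x+\int_{\gamma_0}g_0\,v\,{\rm d}s$ is continuous on $H^1_N(\omega_0,{\rm d}m)$. For the volume term, splitting the weight on $R_0$ as $f_0v=\big(f_0(1+x_1)\big)\big(v(1+x_1)^{-1}\big)$ and applying Cauchy--Schwarz gives $|\int_{\omega_0}f_0v\,{\rm d}x|\leqslant \|f_0\|_{L^2(\omega_0,{\rm d}m^{-1})}\|v\|_{L^2(\omega_0,{\rm d}m)}$. For the boundary term, the same weighted Cauchy--Schwarz produces $|\int_{\gamma_0}g_0v\,{\rm d}s|\leqslant \|g_0\|_{L^2(\gamma_0,{\rm d}\sigma^{-1})}\|\Lambda_0 v\|_{L^2(\gamma_0,{\rm d}\sigma)}$, and the uniform trace estimate \eqref{unif_trace} of Lemma~\ref{lem:trace} controls $\|\Lambda_0 v\|_{L^2(\gamma_0,{\rm d}\sigma)}$ by $\|v\|_{H^1(\omega_0,{\rm d}m)}$. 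Lemma~\ref{lem:conv_l1} guarantees that both integrals defining $L$ are absolutely convergent, so $L$ is well defined, and the compatibility condition \eqref{comp:cond} ensures that $L$ annihilates constants, which is exactly what makes the restriction to the mean-zero space $H^1_N(\omega_0,{\rm d}m)$ consistent.

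With $a$ continuous and coercive and $L\in\big(H^1_N(\omega_0,{\rm d}m)\big)'$, the Riesz representation theorem yields a unique $u_0\in H^1_N(\omega_0,{\rm d}m)$ satisfying $a(u_0,v)=L(v)$ for all $v\in H^1_N(\omega_0,{\rm d}m)$, which is precisely the sought finite energy solution. I expect the only real obstacle to be coercivity: on the semi-infinite strip the gradient does not control the $L^2(\omega_0,{\rm d}m)$ norm without the mean-zero normalization, so the entire argument rests on the uniform Poincar\'e--Wirtinger inequality of Lemma~\ref{lem:poincare} together with the weighted trace theorem; once these are available the proof is routine, which is why the statement can be obtained ``immediately''.
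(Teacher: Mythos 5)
Your proof is correct and follows exactly the route the paper takes: the paper's proof of Theorem~\ref{theo:1} is the single sentence ``Applying Riesz representation Theorem, we immediately get by Poincar\'e-Wirtinger inequality\ldots'', and your write-up simply supplies the routine details (uniform ellipticity \eqref{estim_A} for continuity and coercivity, Lemma~\ref{lem:poincare} to upgrade gradient control to the full weighted norm on the mean-zero space, weighted Cauchy--Schwarz plus the trace estimate \eqref{unif_trace} for the right-hand side). Nothing to correct.
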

\begin{rem}[Regularity of the solutions]
Investigating the maximal regularity for the solution $u_0$ in term of weighted Sobolev spaces is out of the range of our study. However, we can mention 
the following very basic result: In case $g_0\in H^{1/2}_{\rm loc}$, then it is classical to verify that $u_0|_{\mathfrak R_n}\in H^2(\mathfrak R_n)$ for every 
$n\geqslant 0$ (recall that $\mathfrak R_n:=]0,n[\times ]0,1[$). Moreover, still for every $n\geqslant 0$, the function $u_0^n:=u_0|_{\mathfrak R_n}$ satisfies 
$$-\dv(\mathbb A_0\nabla u_0^n)=f_0\text{ in }L^2(\mathfrak R_n)\quad\text{and}\quad \mathbb A_0\nabla u_0^n\cdot n=g_0\text{ in }H^{1/2}(\gamma^n_0),$$
where $\gamma_0^n$ is the upper boundary of $\mathfrak R_n$.
\end{rem}

As already mentioned in the beginning of the previous Section, the existence and uniqueness of a solution $u_\varepsilon\in H^1_N(\omega_\varepsilon,{\rm d}m)$ for Problem~\ref{eq:FVepsilon} when $\varepsilon>0$ is classical.
So, let us now investigate the convergence of $u_\varepsilon$ as $\varepsilon\to 0^+$. 

Notice that 
every functions $f_\varepsilon\in L^2(\omega_\varepsilon,{\rm d}m^{-1})$ and $g_\varepsilon\in L^2(\gamma_\varepsilon,{\rm d}\sigma^{-1})$ can be seen as functions of $L^2(\omega_0,{\rm d}m^{-1})$ and $L^2(\gamma_0,\,{\rm d}\sigma^{-1})$ respectively by setting 
 $f_\varepsilon:=0$ in $\omega_0\setminus\omega_\varepsilon$ and $g_\varepsilon:=0$ on $\gamma_0\setminus\gamma_\varepsilon$.

 \begin{theorem}
 \label{THEO:convergence}
For every $\varepsilon\geqslant 0$, let $f_\varepsilon\in L^2(\omega_\varepsilon,{\rm d}m^{-1})$ and $g_\varepsilon\in L^2(\gamma_\varepsilon,{\rm d}\sigma^{-1})$ be given such that the compatibility condition \eqref{comp:cond} is satisfied and denote by $u_\varepsilon\in H^1_N(\omega_\varepsilon,{\rm d}m)$ the unique solution to Problem~\ref{eq:FVepsilon}.

Assume that
\begin{equation}
\label{assume}
f_\varepsilon\to f_0\text{ in }L^2(\omega_0,{\rm d}m^{-1})\quad\text{and}\quad g_\varepsilon\to g_0\text{ in }L^2(\gamma_0,\,{\rm d}\sigma^{-1}).
\end{equation}
Then, under the convergence result \eqref{conv:Aeps},  we have:
\begin{subequations}
\begin{align}
\|\nabla(u_0-u_{\varepsilon})\|_{L^2(\omega_{\varepsilon})}&\to 0\quad\text{as }\varepsilon\to 0^+,\label{eq:result:1}\\
\left|\int_{\omega_0}\mathbb A_0\nabla u_0\cdot\nabla u_0\,{\rm d}x-\int_{\omega_\varepsilon}\mathbb A_\varepsilon \nabla u_\varepsilon\cdot\nabla u_\varepsilon\,{\rm d}x\right|&\to 0\quad\text{as }\varepsilon\to 0^+.
\label{eq:result:2}
\end{align}
\end{subequations}
 \end{theorem}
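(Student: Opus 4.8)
The plan is to run a uniformly-elliptic energy argument, the only genuine difficulty being that $u_\varepsilon$ lives on $\omega_\varepsilon$ whereas $u_0$ lives on the larger domain $\omega_0$; all comparisons are therefore made on $\omega_\varepsilon$, and the discrepancy on the tail $\omega_0\setminus\omega_\varepsilon=\{x_1>\ell_\varepsilon\}$ is controlled through $\ell_\varepsilon\to+\infty$. First I would establish a uniform a priori bound. Testing \eqref{eq:FVepsilon} with $v=u_\varepsilon$ and using the coercivity in \eqref{estim_A} gives $\lambda_1\|\nabla u_\varepsilon\|_{L^2(\omega_\varepsilon)}^2\leqslant\int_{\omega_\varepsilon}f_\varepsilon u_\varepsilon\,{\rm d}x+\int_{\gamma_\varepsilon}g_\varepsilon u_\varepsilon\,{\rm d}s$. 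Estimating the right-hand side by the weighted Cauchy--Schwarz inequalities, then controlling $\|u_\varepsilon\|_{L^2(\gamma_\varepsilon,{\rm d}\sigma)}$ by the uniform trace bound \eqref{unif_trace} and $\|u_\varepsilon\|_{L^2(\omega_\varepsilon,{\rm d}m)}$ by the Poincaré--Wirtinger inequality \eqref{eq:poincare} (legitimate since $u_\varepsilon\in H^1_N(\omega_\varepsilon,{\rm d}m)$), one absorbs the factor $\|u_\varepsilon\|_{H^1(\omega_\varepsilon,{\rm d}m)}\leqslant C\|\nabla u_\varepsilon\|_{L^2(\omega_\varepsilon)}$ into the left-hand side. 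Since \eqref{assume} bounds $\|f_\varepsilon\|_{L^2(\omega_0,{\rm d}m^{-1})}$ and $\|g_\varepsilon\|_{L^2(\gamma_0,{\rm d}\sigma^{-1})}$, this yields $\sup_{\varepsilon>0}\|u_\varepsilon\|_{H^1(\omega_\varepsilon,{\rm d}m)}<\infty$.

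To prove \eqref{eq:result:1} I would set $w_\varepsilon:=u_\varepsilon-u_0|_{\omega_\varepsilon}\in H^1(\omega_\varepsilon,{\rm d}m)$, which is uniformly bounded in $H^1(\omega_\varepsilon,{\rm d}m)$ by the previous step together with $u_0\in H^1_N(\omega_0,{\rm d}m)$. Starting from $\lambda_1\|\nabla w_\varepsilon\|_{L^2(\omega_\varepsilon)}^2\leqslant\int_{\omega_\varepsilon}\mathbb A_\varepsilon\nabla w_\varepsilon\cdot\nabla w_\varepsilon\,{\rm d}x=\int_{\omega_\varepsilon}\mathbb A_\varepsilon\nabla u_\varepsilon\cdot\nabla w_\varepsilon\,{\rm d}x-\int_{\omega_\varepsilon}\mathbb A_\varepsilon\nabla u_0\cdot\nabla w_\varepsilon\,{\rm d}x$, I would substitute \eqref{eq:FVepsilon} (with $v=w_\varepsilon$) in the first integral and, in the second, write $\mathbb A_\varepsilon=\mathbb A_0+(\mathbb A_\varepsilon-\mathbb A_0)$ and replace $\int_{\omega_\varepsilon}\mathbb A_0\nabla u_0\cdot\nabla w_\varepsilon$ by the $\varepsilon=0$ formulation tested against the extension $T_\varepsilon w_\varepsilon$ of Lemma~\ref{LEM:extension} (allowed by Remark~\ref{rem:H1NH1compatibilite}), using that $T_\varepsilon w_\varepsilon=w_\varepsilon$ on $\omega_\varepsilon$. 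This leaves four error contributions to estimate.

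These are: (i) the data mismatch $\int_{\omega_\varepsilon}(f_\varepsilon-f_0)w_\varepsilon\,{\rm d}x+\int_{\gamma_\varepsilon}(g_\varepsilon-g_0)w_\varepsilon\,{\rm d}s$, which tends to $0$ by the weighted Cauchy--Schwarz inequality, the assumption \eqref{assume}, and the uniform bounds on $\|w_\varepsilon\|_{L^2(\omega_\varepsilon,{\rm d}m)}$ and (via \eqref{unif_trace}) on $\|w_\varepsilon\|_{L^2(\gamma_\varepsilon,{\rm d}\sigma)}$; (ii) the data tails $-\int_{\omega_0\setminus\omega_\varepsilon}f_0\,T_\varepsilon w_\varepsilon\,{\rm d}x-\int_{\gamma_0\setminus\gamma_\varepsilon}g_0\,T_\varepsilon w_\varepsilon\,{\rm d}s$, which vanish because $\|f_0\|_{L^2(\omega_0\setminus\omega_\varepsilon,{\rm d}m^{-1})}\to0$ and $\|g_0\|_{L^2(\gamma_0\setminus\gamma_\varepsilon,{\rm d}\sigma^{-1})}\to0$ by absolute continuity of the integral ($\ell_\varepsilon\to+\infty$) while $\|T_\varepsilon w_\varepsilon\|_{H^1(\omega_0,{\rm d}m)}\leqslant\sqrt3\,\|w_\varepsilon\|_{H^1(\omega_\varepsilon,{\rm d}m)}$ stays bounded; (iii) the gradient tail $\int_{\omega_0\setminus\omega_\varepsilon}\mathbb A_0\nabla u_0\cdot\nabla(T_\varepsilon w_\varepsilon)\,{\rm d}x$, bounded by $\lambda_2\|\nabla u_0\|_{L^2(\omega_0\setminus\omega_\varepsilon)}\|\nabla(T_\varepsilon w_\varepsilon)\|_{L^2(\omega_0)}\to0$ by the tail smallness of $\nabla u_0\in L^2(\omega_0)$; and (iv) the operator mismatch $-\int_{\omega_\varepsilon}(\mathbb A_\varepsilon-\mathbb A_0)\nabla u_0\cdot\nabla w_\varepsilon\,{\rm d}x$, bounded by $\|\mathbb A_\varepsilon-\mathbb A_0\|_{C^0(\overline\omega_\varepsilon)}\|\nabla u_0\|_{L^2(\omega_\varepsilon)}\|\nabla w_\varepsilon\|_{L^2(\omega_\varepsilon)}\to0$ by \eqref{conv:Aeps}. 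Since the sum is $o(1)$, coercivity gives $\|\nabla w_\varepsilon\|_{L^2(\omega_\varepsilon)}\to0$, i.e. \eqref{eq:result:1}.

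Finally \eqref{eq:result:2} follows from \eqref{eq:result:1}: inserting $\int_{\omega_\varepsilon}\mathbb A_0\nabla u_0\cdot\nabla u_0\,{\rm d}x$, the difference of energies splits into the tail $\int_{\omega_0\setminus\omega_\varepsilon}\mathbb A_0\nabla u_0\cdot\nabla u_0\,{\rm d}x\to0$, the operator mismatch $\int_{\omega_\varepsilon}(\mathbb A_0-\mathbb A_\varepsilon)\nabla u_0\cdot\nabla u_0\,{\rm d}x\to0$ by \eqref{conv:Aeps}, and the term $\int_{\omega_\varepsilon}\mathbb A_\varepsilon(\nabla u_0-\nabla u_\varepsilon)\cdot(\nabla u_0+\nabla u_\varepsilon)\,{\rm d}x$, which by \eqref{estim_A} is at most $\lambda_2\|\nabla(u_0-u_\varepsilon)\|_{L^2(\omega_\varepsilon)}\big(\|\nabla u_0\|_{L^2(\omega_\varepsilon)}+\|\nabla u_\varepsilon\|_{L^2(\omega_\varepsilon)}\big)\to0$ by \eqref{eq:result:1} and the a priori bound. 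I expect the main obstacle to be the bookkeeping in steps (ii)--(iii): rigorously passing between the two variational formulations on mismatched domains, inserting the extension operator exactly where needed, and checking that the tails genuinely vanish as $\ell_\varepsilon\to+\infty$; the additive-constant ambiguity of the Neumann solutions is harmless here, since every quantity is estimated directly in norm rather than through Poincaré applied to $w_\varepsilon$.
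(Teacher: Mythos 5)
Your proposal is correct and follows essentially the same route as the paper: a uniform a priori bound from testing with $u_\varepsilon$ (using the uniform coercivity, trace and Poincar\'e--Wirtinger constants), then a Galerkin-orthogonality identity obtained by testing the $\varepsilon$-problem with $u_0|_{\omega_\varepsilon}-u_\varepsilon$ and the limit problem with its extension via $T_\varepsilon$, with the error split into the data mismatch, the tails on $\omega_0\setminus\omega_\varepsilon$, and the operator mismatch controlled by \eqref{conv:Aeps}. The only differences are cosmetic bookkeeping (you attach the $\mathbb A_\varepsilon-\mathbb A_0$ term to $\nabla u_0$ rather than $\nabla u_\varepsilon$, and you explicitly record the energy tail $\int_{\omega_0\setminus\omega_\varepsilon}\mathbb A_0\nabla u_0\cdot\nabla u_0\,{\rm d}x$ in the final step, which the paper leaves implicit).
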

 \begin{proof}
Throughout this proof, $C$ will denote a constant that may change from line to line, but that is independent of $\varepsilon$. 

Taking $v=u_\varepsilon\in H^1(\omega_\varepsilon,{\rm d}m)$ in \eqref{eq:FVepsilon} (see Remark \ref{rem:H1NH1compatibilite}), we get that:
 $$\int_{\omega_\varepsilon}\mathbb A_\varepsilon\nabla u_\varepsilon\cdot\nabla u_\varepsilon\,{\rm d}x\leqslant \|f_\varepsilon\|_{L^2(\omega_0,{\rm d}m^{-1})}\|u_\varepsilon\|_{L^2(\omega_\varepsilon,{\rm d}m)}+
 \|g_\varepsilon\|_{L^2(\gamma_0,\,{\rm d}\sigma^{-1})}\|u_\varepsilon\|_{L^2(\gamma_\varepsilon,\,{\rm d}\sigma)}.$$
Since $\mathbb A_\varepsilon$ is positive definite uniformly (with respect to $\varepsilon$) according to Lemma~\ref{lem:ellipticiteuniform}, and since the continuity of the trace operator and the Poincar\'e-Wirtinger constants are uniform with respect to $\varepsilon\geqslant 0$ as well (as asserted in Lemma~\ref{lem:trace} and Lemma~\ref{lem:poincare}), we obtain that:
$$\|\nabla u_\varepsilon\|^2_{L^2(\omega_\varepsilon)}\leqslant C
 (\|f_\varepsilon\|_{L^2(\omega_0,{\rm d}m^{-1})}+ \|g_\varepsilon\|_{L^2(\gamma_0,\,{\rm d}\sigma^{-1})})\|\nabla u_\varepsilon\|_{L^2(\omega_\varepsilon)},$$
and therefore, using again Lemma~\ref{lem:poincare}:
\begin{equation}
\label{eq:lokji}
\|u_\varepsilon\|_{H^1(\omega_\varepsilon,{\rm d}m)}\leqslant C,\quad\forall\,\varepsilon\geqslant 0.
\end{equation}
Specifying now $v=u_0|_{\omega_\varepsilon}-u_\varepsilon\in H^1(\omega_\varepsilon,{\rm d}m)$ in \eqref{eq:FVepsilon}, we get:
\begin{equation}
\label{eq:est_1}
\int_{\omega_\varepsilon}\mathbb A_\varepsilon\nabla u_\varepsilon\cdot(\nabla u_0-\nabla u_\varepsilon)\,{\rm d}x=\int_{\omega_\varepsilon}f_\varepsilon (u_0-u_\varepsilon)\,{\rm d}x
+\int_{\gamma_\varepsilon}g_\varepsilon(u_0-u_\varepsilon)\,{\rm d}s.
\end{equation}
On the other hand, taking $v=u_0-T_\varepsilon u_\varepsilon\in H^1(\omega_0,{\rm d}m)$ in \eqref{eq:FVepsilon} when $\varepsilon=0$, where $T_\varepsilon$ is the extension operator introduced in Lemma~\ref{LEM:extension}, we get:
\begin{equation}
\label{eq:est_2}
\int_{\omega_0}\mathbb A_0\nabla u_0\cdot(\nabla u_0-\nabla T_\varepsilon u_\varepsilon)\,{\rm d}x=\int_{\omega_0}f_0 (u_0-T_\varepsilon u_\varepsilon)\,{\rm d}x
+\int_{\gamma_0}g_0(u_0-T_\varepsilon u_\varepsilon)\,{\rm d}s.
\end{equation}
Setting $\omega_\varepsilon^c:=\omega_0\setminus{\omega_\varepsilon}=[\ell_\varepsilon,+\infty[\times]0,1[$ and $\gamma_\varepsilon^c=\gamma_0\setminus\gamma_\varepsilon$ and subtracting \eqref{eq:est_1} from  \eqref{eq:est_2}, we obtain:
\begin{multline}
\label{grand_form}
\int_{\omega_\varepsilon}\mathbb A_0(\nabla u_0-\nabla u_\varepsilon)\cdot (\nabla u_0-\nabla u_\varepsilon)\,{\rm d}x=-\int_{\omega_\varepsilon}(\mathbb A_0-\mathbb A_\varepsilon)\nabla u_\varepsilon\cdot(\nabla u_0-\nabla u_\varepsilon)\,{\rm d}x\\
-\int_{\omega_\varepsilon^c}\mathbb A_0\nabla u_0\cdot(\nabla u_0-\nabla T_\varepsilon u_\varepsilon)\,{\rm d}x
+\int_{\omega_\varepsilon}(f_0-f_\varepsilon)(u_0-u_\varepsilon)\,{\rm d}x+
\int_{\gamma_\varepsilon}(g_0-g_\varepsilon)(u_0-u_\varepsilon)\,{\rm d}s\\
+\int_{\omega_\varepsilon^c}f_0(u_0-T_\varepsilon u_\varepsilon)\,{\rm d}x
+\int_{\gamma_\varepsilon^c} g_0(u_0-T_\varepsilon u_\varepsilon )\,{\rm d}s.
\end{multline}
Lemma~\ref{LEM:extension} together with the estimate \eqref{eq:lokji} ensure that $\|T_\varepsilon u_\varepsilon\|_{H^1(\omega_\varepsilon,{\rm d}m)}\leqslant C$ for every $\varepsilon\geqslant 0$. On the other hand, taking into account 
the convergence result \eqref{conv:Aeps} and the hypothesis \eqref{assume} in \eqref{grand_form}, we get 
$$
\int_{\omega_\varepsilon}\mathbb A_0(\nabla u_0-\nabla u_\varepsilon)\cdot (\nabla u_0-\nabla u_\varepsilon)\,{\rm d}x\longrightarrow 0\quad\text{as }\varepsilon\to 0^+,
$$
and \eqref{eq:result:1} follows with \eqref{estim_A}.

To prove now \eqref{eq:result:2}, we write that:
\begin{multline*}
\int_{\omega_0}\mathbb A_0\nabla u_0\cdot\nabla u_0\,{\rm d}x-\int_{\omega_\varepsilon}\mathbb A_\varepsilon \nabla u_\varepsilon\cdot\nabla u_\varepsilon\,{\rm d}x
=\int_{\omega_\varepsilon}\mathbb A_0(\nabla u_0-\nabla u_\varepsilon)\cdot(\nabla u_0+\nabla u_\varepsilon)\,{\rm d}x\\
+\int_{\omega_\varepsilon}(\mathbb A_0-\mathbb A_\varepsilon)\nabla u_\varepsilon\cdot \nabla u_\varepsilon\,{\rm d}x,
\end{multline*}
and the conclusion follows, invoking again the same aforementioned boundedness and convergence arguments. 
 \end{proof}

\section{Application to the collision problem}
\label{sect:application}
The weak formulation of System~\ref{eq:uepsilon_2} reads, for every $\varepsilon\geqslant 0$:
\begin{equation}
\label{weak_form}
\int_{\omega_\varepsilon}\mathbb A_\varepsilon\nabla u_\varepsilon\cdot \nabla v\,{\rm d}x=\int_{\Gamma_\varepsilon} g_\varepsilon v\,{\rm d}s,\qquad\forall\,v\in H^1_N(\omega_{\varepsilon},{\rm d}m),
\end{equation}
where $g_\varepsilon=D\Psi_\varepsilon^{-1}\tau\cdot e_1$. This quantity can be made explicit on $\gamma^R_\varepsilon=\{(x_1,1)\,:\, 0<x_1<\ell_\varepsilon\}$ using the expression \eqref{def:Psi-1} of $\Psi_\varepsilon^{-1}$. Thus we get 
$$g_\varepsilon=H_\varepsilon(\mu_\varepsilon)\quad\text{on }\gamma^R_\varepsilon.$$
Notice that although the expression of the matrix $\mathbb A_\varepsilon$ depends on $\alpha$ (i.e. on the nature of the cusp) and on $\varepsilon\geqslant 0$, this dependence is somehow irrelevant regarding the well-posedness 
of Problem~\ref{weak_form} because, as asserted by Lemma~\ref{lem:ellipticiteuniform}, the matrix is always uniformly elliptic and therefore the left hand side 
of \eqref{weak_form} always defines a symmetric, elliptic bilinear form on $H^1_N(\omega_\varepsilon,{\rm d}m)$, according to Lemma~\ref{lem:poincare}.

All of the relevant information regarding the well-posedness of Problem~\ref{weak_form} is carried by the boundary data $g_\varepsilon$. Considering Theorem~\ref{theo:1}, a sufficient condition for Problem~\ref{weak_form}
to be well-posed when $\varepsilon=0$ is $g_0\in L^2(\gamma_0,{\rm d}\sigma^{-1})$ while, according to Theorem~\ref{THEO:convergence}, the convergence of the solution $u_\varepsilon$ toward $u_0$ and of the Dirichlet energy $E_\varepsilon$ toward $E_0$ is ensured if 
$g_\varepsilon\to g_0$ in $L^2(\gamma_0,{\rm d}\sigma^{-1})$. These conditions are easy to check and lead to distinguish two cases, a sub-critical case $\alpha<2$ and a super-critical case $\alpha\geqslant 2$. Let us emphasize that the critical value $2$ is nothing but the dimension, and this is in agreement with the results of \cite{CarNazSok09,NazSokTas09}.

\subsection{The sub-critical case $\alpha<2$ }
\label{subsect:sub}
\begin{theorem}
\label{THEO:nice_case}
When $\alpha<2$, the following assertions hold true:
\begin{enumerate}
\item Well posedness of the limit problem: The Neumann boundary value problem \eqref{weak_form} is well posed for $\varepsilon=0$. In particular, the corresponding Dirichlet energy $E_0$ is finite.
\item Convergence of solutions: $\|\nabla u_\varepsilon-\nabla u_0\|_{L^2(\omega_\varepsilon)}\to 0$ as $\varepsilon\to 0^+$, where $u_\varepsilon$ and $u_0$ are the solutions to Problem \eqref{weak_form} for $\varepsilon>0$ and $\varepsilon=0$ respectively.
\item Convergence of the Dirichlet energy: The Dirichlet energy $E_\varepsilon$  corresponding to Problem \eqref{weak_form} with $\varepsilon>0$ tends to $E_0$, the finite Dirichlet energy of the problem when $\varepsilon=0$.
\end{enumerate}
  \end{theorem}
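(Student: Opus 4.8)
The plan is to reduce all three assertions to a single quantitative fact — that the transformed Neumann datum $g_0 = H_0(\mu_0)$ belongs to $L^2(\gamma_0,{\rm d}\sigma^{-1})$ exactly when $\alpha<2$ — and then to feed this into the abstract results of Section~\ref{sect:abstract}.

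First I would make the datum explicit. On $\gamma^R_\varepsilon$ one has $g_\varepsilon = H_\varepsilon(\mu_\varepsilon) = \kappa|\mu_\varepsilon|^{1+\alpha}+\varepsilon$, while the weight defining ${\rm d}\sigma^{-1}={\rm d}\nu^S_2$ is $(1+x_1)^2$. Combining these with the uniform decay estimate \eqref{estim_phi_esp:1} of Lemma~\ref{LEM:proper_mu} gives
\[
|g_\varepsilon(x_1)|^2(1+x_1)^2 \leqslant C_2^2\,(1+x_1)^{-2-\frac{2}{\alpha}}(1+x_1)^2 = C_2^2\,(1+x_1)^{-\frac{2}{\alpha}},
\]
whose right-hand side is integrable on $(0,+\infty)$ if and only if $2/\alpha>1$, that is $\alpha<2$ (on the bounded piece $\gamma^D_\varepsilon$ no decay is needed). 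This proves $g_0\in L^2(\gamma_0,{\rm d}\sigma^{-1})$. The compatibility condition $\int_{\gamma_0}g_0\,{\rm d}s=0$ is inherited from $\int_{\Gamma_\varepsilon}n_2\,{\rm d}\sigma=0$, which itself follows by applying the divergence theorem to the constant field $e_2$ on the half-solid $S_\varepsilon$, the contribution of the symmetry axis vanishing since $n_2=0$ there. With $f_0=0$, Theorem~\ref{theo:1} then provides a unique finite-energy solution $u_0\in H^1_N(\omega_0,{\rm d}m)$ and a finite energy $E_0$, establishing assertion~1.

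For assertions~2 and~3 I would simply check the hypotheses of Theorem~\ref{THEO:convergence}: the matrix convergence \eqref{conv:Aeps} is already granted by Lemma~\ref{lem:ellipticiteuniform}, so it remains to verify $g_\varepsilon\to g_0$ in $L^2(\gamma_0,{\rm d}\sigma^{-1})$ (with $f_\varepsilon=f_0=0$ trivial). Extending $g_\varepsilon$ by zero beyond $\ell_\varepsilon$ (harmless since $\ell_\varepsilon\to+\infty$), the uniform convergence $\mu_\varepsilon\to\mu_0$ of \eqref{unif_conv} yields the pointwise convergence $g_\varepsilon\to g_0$ on $\gamma^R$, and the displayed bound supplies the $\varepsilon$-independent dominating function $C_2^2(1+x_1)^{-2/\alpha}\in L^1(0,+\infty)$; the dominated convergence theorem then gives convergence in $L^2(\gamma_0,{\rm d}\sigma^{-1})$ on $\gamma^R$, while on the fixed bounded piece $\gamma^D$ convergence is immediate from the $C^1$ convergence of $\Psi_\varepsilon^{-1}$ (Proposition~\ref{prop:chgt_variables}). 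Theorem~\ref{THEO:convergence} then delivers \eqref{eq:result:1} (assertion~2) and \eqref{eq:result:2} (assertion~3).

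The only genuinely delicate point — and the crux of the whole dichotomy — is the integrability threshold: the exponent arithmetic $-2/\alpha<-1 \iff \alpha<2$ is exactly what distinguishes this sub-critical regime from the blow-up of the super-critical case, and it rests entirely on the sharp decay rate \eqref{estim_phi_esp:1}. Once that estimate is secured, both the membership $g_0\in L^2(\gamma_0,{\rm d}\sigma^{-1})$ and the dominating function for the limit passage follow at once, and everything else is a direct citation of the well-posedness and convergence theorems already proved.
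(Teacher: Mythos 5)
Your proof is correct and follows essentially the same route as the paper's: both reduce everything to the membership $g_0=H_0(\mu_0)\in L^2(\gamma_0,{\rm d}\sigma^{-1})$ (the paper via the explicit asymptotic of $g_0$ deduced from \eqref{def_mu0}, you via the uniform bound \eqref{estim_phi_esp:1} --- the same exponent arithmetic $-2/\alpha<-1\iff\alpha<2$) and to a dominated-convergence argument for $g_\varepsilon\to g_0$ in that weighted space, before citing Theorems~\ref{theo:1} and~\ref{THEO:convergence}. Your explicit verification of the compatibility condition is a detail the paper leaves implicit, but otherwise the two arguments coincide.
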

  Considering the implication of this result for the physical problem of collision, we deduce that the added mass \eqref{added_mass} is bounded uniformly in $\varepsilon\geqslant 0$. Using this estimate in \eqref{cauchy:pb}, we get that the velocity of the solid is 
  bounded from below and therefore:
\begin{cor}
\label{cor_shock}
When $\alpha<2$, the solid meets the cavity's wall in finite time with non-zero velocity (real shock case).
 \end{cor}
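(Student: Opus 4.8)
The plan is to convert the analytic convergence of the Dirichlet energy established in Theorem~\ref{THEO:nice_case} into an a priori bound on the added mass, and then to feed this bound into the autonomous Cauchy problem \eqref{cauchy:pb} governing the vertical coordinate $\varepsilon(t)$ of the solid.

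First I would recall that, up to a multiplicative constant (namely $\varrho_f$ times the factor accounting for the passage from the half-configuration back to the full symmetric one), the added mass $m_f(\varepsilon)$ defined in \eqref{added_mass} coincides with the Dirichlet energy $E_\varepsilon$ of \eqref{eq:Eepsilon}. Since $\alpha<2$, the third item of Theorem~\ref{THEO:nice_case} gives $E_\varepsilon\to E_0<\infty$ as $\varepsilon\to 0^+$. Combined with the analyticity, hence continuity, of $\varepsilon\mapsto m_f(\varepsilon)$ on $]0,\varepsilon^\ast]$ recalled from \cite{Chambrion:2012aa}, this shows that $m_f$ extends continuously to the closed interval $[0,\varepsilon^\ast]$ with $m_f(0)<\infty$; in particular there exists $M>0$ such that $0\leqslant m_f(\varepsilon)\leqslant M$ for every $\varepsilon\in[0,\varepsilon^\ast]$.

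Next I would exploit this uniform bound directly in the first equation of \eqref{cauchy:pb}. Since $m_s+m_f(\varepsilon(t))\leqslant m_s+M$ and $\varepsilon_0'<0$, the positivity of the square root yields, as long as the solution exists (i.e. as long as $\varepsilon(t)>0$), the pointwise estimate
$$
\varepsilon'(t)=\varepsilon_0'\sqrt{\frac{m_s+m_f(\varepsilon^\ast)}{m_s+m_f(\varepsilon(t))}}\leqslant \varepsilon_0'\sqrt{\frac{m_s+m_f(\varepsilon^\ast)}{m_s+M}}=:-c<0.
$$
Thus the descent speed is bounded below by the positive constant $c$, and integrating $\varepsilon'(t)\leqslant -c$ from $t=0$ gives $\varepsilon(t)\leqslant \varepsilon^\ast-ct$, so $\varepsilon(t)$ must vanish at some time $T\leqslant \varepsilon^\ast/c$. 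This establishes collision in finite time.

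Finally, to identify the impact velocity, I would observe that the right-hand side of \eqref{cauchy:pb} is a continuous function of $\varepsilon(t)\in[0,\varepsilon^\ast]$, thanks to the continuous extension of $m_f$ up to $\varepsilon=0$, so that $\varepsilon'(t)$ admits the finite limit $\varepsilon_0'\sqrt{(m_s+m_f(\varepsilon^\ast))/(m_s+m_f(0))}$ as $t\to T^-$; being bounded above by $-c<0$, this limit is strictly negative, so the solid reaches the cavity's bottom with non-zero velocity. The only point requiring some care, and the main (though mild) obstacle, is precisely this passage to the limit at the contact time $t=T$: it hinges on the continuity of $m_f$ up to $\varepsilon=0$, which is exactly what the finiteness of the limit energy $E_0$ in Theorem~\ref{THEO:nice_case} guarantees.
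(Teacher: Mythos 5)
Your proof is correct and follows essentially the same route as the paper: the convergence $E_\varepsilon\to E_0<\infty$ from Theorem~\ref{THEO:nice_case} gives a uniform bound on the added mass $m_f(\varepsilon)$, which when inserted into \eqref{cauchy:pb} bounds $\varepsilon'(t)$ above by a strictly negative constant, forcing collision in finite time with non-zero velocity. The paper states this in two lines just before the corollary; your write-up merely makes the continuous extension of $m_f$ to $[0,\varepsilon^\ast]$ and the integration explicit.
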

  \begin{proof}[of Theorem~\ref{THEO:nice_case}]
As already mentioned, the first point of the Theorem is a straightforward consequence of Theorem~\ref{theo:1}.  Indeed, applying Theorem~\ref{theo:1} with $f_0=0$ and $g_0=H_0(\mu_0)$, we get 
existence and uniqueness of a solution if $g_0\in L^2(\gamma_0,{\rm d}\sigma^{-1})$. 
Considering \eqref{def_mu0}, we deduce that:
 $$g_0\underset{x_1=+\infty}{\sim}\kappa^{-\frac{1}{\alpha}}\alpha^{-1-\frac{1}{\alpha}}x_1^{-1-\frac{1}{\alpha}},$$
and requiring $g_0$ to be in $L^2(\gamma_0,\,{\rm d}\sigma^{-1})$ leads to $\alpha<2$.
 
 The two remaining points result from Theorem~\ref{THEO:convergence}. It suffices to prove that $g_\varepsilon:=H_\varepsilon(\mu_\varepsilon)$ (extended by $0$ on $]\ell_\varepsilon,+\infty[$) converges to $g_0:=H_0(\mu_0)$ in $L^2(\gamma_0,{\rm d}\sigma^{-1})$.
 According to Lemma~\ref{LEM:proper_mu}, we get that $g_\varepsilon$ tends to $g_0$ a.e. on $\mathbb R_+$ and that:
 $$|g_\varepsilon-g_0|^{2}\leqslant C(1+x_1)^{-2-\frac{2}{\alpha}},$$
 for some constant $C>0$ independent of $\varepsilon\geqslant 0$.
 The conclusion follows from the dominated convergence theorem.
 \end{proof}
\subsection{The super-critical case $\alpha\geqslant 2$ }
\label{subsect:super}
When $\alpha\geqslant 2$, the boundary term $g_0:=H_0(\mu_0)$ in Problem~\ref{weak_form} is not anymore in $L^2(\gamma_0,\,{\rm d}\sigma^{-1})$, preventing from reasoning as in the previous section.

Actually, we already know, from Proposition~\ref{naza}, that Problem~\ref{weak_form} (when $\varepsilon=0$) does not admit finite energy solution in this case and from Proposition~\ref{prop:estim_alpha} that $E_\varepsilon\to +\infty$ as $\varepsilon\to 0^+$. 

In order to derive the first term in the asymptotic expansion of $E_\varepsilon$ when $\varepsilon$ goes to 0 in this case, we proceed as follows. For every $\varepsilon\geqslant 0$ we seek an ansatz $u_\varepsilon^s$ to $u_\varepsilon$, that contains all the information about the asymptotic behavior at infinity of $u_\varepsilon$  when $\varepsilon\to 0$, responsible of the blow up of the Dirichlet energy as $\varepsilon\to 0$. Equivalently, in the physical domain, this ansatz contains all the information about the appearance of the cusp singularity at the contact point. In particular, $u_\varepsilon^s$ will be shown to satisfy:
\begin{equation}
\label{blow_up}
\int_{\omega_\varepsilon}\mathbb A_\varepsilon\nabla u_\varepsilon^s\cdot \nabla u_\varepsilon^s{\rm d}x\to +\infty\quad\text{ as }\quad \varepsilon\to 0^+.
\end{equation}

This is why we call this ansatz as the singular part of the solution (which is a slight abuse of language since $u_\varepsilon$ is smooth and has finite Dirichlet energy for every $\varepsilon>0$), and we will refer to $u_\varepsilon^r:=u_\varepsilon-u_\varepsilon^s$ as the regular part of $u_\varepsilon$. 

The ansatz $u_\varepsilon^s$ will be derived by adapting to our semi-infinite strip the multiscale expansion method used in \cite{CarNazSok09,NazSokTas09} to obtain the singular behavior near the contact point (in the physical domain). More precisely, for every $\varepsilon\geqslant 0$, the ansatz $u_\varepsilon^s$ will be constructed such that the following properties hold true:
\begin{itemize}
\item The function $u_\varepsilon^s$ is smooth, supported in $R_\varepsilon$ and extended by 0 in $D$, and the quantity $\mathbb A_\varepsilon\nabla u_\varepsilon^s\cdot n$ vanishes on the boundary $\partial\omega_\varepsilon\setminus\gamma_\varepsilon$. This implies in particular that for every $\varepsilon>0$, the function $u_\varepsilon^s$ satisfies the weak formulation \eqref{eq:FVepsilon} with source terms $(f^s_\varepsilon, g_\varepsilon^s)\in L^2(\omega_\varepsilon,{\rm d}m^{-1})\times 
L^2(\gamma_\varepsilon,{\rm d}\sigma^{-1})$, where $f_\varepsilon^s:=-\dv(\mathbb A_\varepsilon\nabla u^s_\varepsilon)$ in $\omega_\varepsilon$ and $g^s_\varepsilon:=\mathbb A_\varepsilon\nabla u^s_\varepsilon\cdot n$ on $\gamma_\varepsilon$.
\item If we set
\begin{equation}\label{eq:fepsrgepsr}
f_\varepsilon^r:=-\dv(\mathbb A_\varepsilon\nabla u^r_\varepsilon)=-f_\varepsilon^s\text{ in }\omega_\varepsilon\qquad\text{and}\qquad g_\varepsilon^r:=\mathbb A_\varepsilon\nabla u^r_\varepsilon\cdot n=g_\varepsilon-g_\varepsilon^s\text{ on }\gamma_\varepsilon,
\end{equation}
then for $\varepsilon=0$, $f_0^r\in L^2(\omega_0,{\rm d}m^{-1})$, $g_0^r\in L^2(\gamma_0,{\rm d}\sigma^{-1})$ and the following convergences hold true:
$$\|f_\varepsilon^r-f_0^r\|_{L^2(\omega_\varepsilon,{\rm d}m^{-1})}\to 0\quad\text{ and }\quad\|g_\varepsilon^r-g_0^r\|_{L^2(\gamma_\varepsilon,{\rm d}\sigma^{-1})}\to 0\text{ as }\varepsilon\to 0^+.$$
\end{itemize}
Then, according to Lemma~\ref{lem:conv_l1} and since $f^r_\varepsilon$ and $g^r_\varepsilon$ satisfy the compatibility condition for every $\varepsilon>0$ (this follows from the fact that $u_\varepsilon^s$ is smooth and $\int_{\omega_\varepsilon}g_\varepsilon\,{\rm d}s=0$), we can pass to 
the limit in \eqref{comp:cond} to get:
$$\int_{\omega_0}f_0^r\,{\rm d}x+\int_{\gamma_0}g_0^r\,{\rm d}s=0.$$
Consequently, for every $\varepsilon\geqslant0$, $u_\varepsilon^r$ is a solution (in the sense of Definition \ref{def:solution}) of Problem \eqref{eq:FVepsilon} with the source terms $(f^r_\varepsilon,g_\varepsilon^r)$ given by \eqref{eq:fepsrgepsr}. On the other hand, the properties of $f_\varepsilon^r$ and $g^\varepsilon_r$ ensure, thanks to Theorem~\ref{THEO:convergence}, that $u^r_0\in H^1_N(\omega_0,{\rm d}m)$ 
and that 
$$\|\nabla u_\varepsilon^r - \nabla u_0^r\|_{L^2(\omega_\varepsilon)}=\|\nabla u_\varepsilon - \nabla(u_\varepsilon^s+ u_0^r)\|_{L^2(\omega_\varepsilon)}
\to 0\quad\text{as}\quad\varepsilon\to 0^+.$$ 
Considering the limit problem ($\varepsilon=0$), this construction will provide a natural solution 
$$u_0=u_0^s+u_0^r,$$
which does not belong to $H^1(\omega_0,{\rm d}m)$, leading  to supplement Definition~\ref{def:solution} with:
\begin{definition}[Infinite energy solution]
\label{def:solution_2}
Let $u_0^r\in H^1_N(\omega_0,{\rm d}m)$ be a solution to System~\eqref{eq:uepsilon} (for $\varepsilon=0$) with volume source term $f^r_0\in L^2(\omega_0,{\rm d}m^{-1})$ an boundary data $g^r_0\in L^2(\omega_0,{\rm d}\sigma^{-1})$ satisfying the compatibility condition 
\begin{equation}
\label{comp:cond_3}
\int_{\omega_0} f_0^r{\rm d}x+\int_{\gamma_0} g^r_0{\rm d}s=0.
\end{equation}
Let $u_0^s$ be a smooth function, supported in $R_0$ (and extended by $0$ in $D$) with infinite Dirichlet energy and such that $\mathbb A_0\nabla u_0^s\cdot n=0$ on 
$\partial\omega_0\setminus\gamma_0$ and denote 
$$f_0^s:=-\dv(\mathbb A_0\nabla u_0^s)\text{ in }\omega_0\quad\text{and}\quad g_0^s:=\mathbb A_0\nabla u_0^s\cdot n\text{ on }\gamma_0.$$
Then the function:
$$u_0:=u_0^s+u_0^r,$$
set in $\omega_0$ is called  an  infinite energy solution to System~\eqref{eq:uepsilon} (for $\varepsilon=0$) with volume source term $f_0:=f^r_0+f^s_0$ and boundary 
data $g_0:=g_0^r+g_0^s$.
\end{definition} 
\begin{rem}
The (smooth) function $u_0^s$ is required to be supported in $R_0$ in order to ensure that the expression of the volume source term $f_0^s$ makes sense. Indeed, the entries of the matrix $\mathbb A_0$ are only supposed to be continuous in $D$ whereas they are $C^1$ in $R_0$. 
\end{rem}

We can now give the expression of the ansatz $u_\varepsilon^s$: Let $\chi$ be a smooth cut-off function defined in $\omega_0$ such that $\chi=1$ in $]1,+\infty[\times]0,1[$, $\chi$ is independent of $x_2$ in $]0,1[\times]0,1[$ and $\chi=0$ in $D$. 
For every $\varepsilon\geqslant 0$, we define the function $u_\varepsilon^s$ in $\omega_\varepsilon$ by setting:
\begin{equation}
\label{ansatz_exp}
u_\varepsilon^s(x)=\chi(x)\left[-\int_0^{x_1} \mu_\varepsilon(s)\,{\rm d}s+
\frac{1}{2}x_2^2\left[{H_\varepsilon(\mu_\varepsilon(x_1))-\mu_\varepsilon(x_1){H'_0}(\mu_\varepsilon(x_1))}\right]\right],
\end{equation}
for every $x\in\omega_\varepsilon$. The following Lemma, the proof of which is given in Appendix~\ref{SEC:technic}, asserts that the Dirichlet energy indeed blows up as $\varepsilon$ goes to $0^+$:
\begin{lemma}
\label{LEM:w_var}
The Dirichlet energy of $u_\varepsilon^s$ behaves as follows when $\varepsilon\to 0^+$:
\begin{equation}
\label{asymp_behav_energ}
\int_{\omega_\varepsilon}\mathbb A_\varepsilon\nabla u_\varepsilon^s\cdot \nabla u_\varepsilon^s{\rm d}x\underset{\varepsilon=0}{\sim}
\begin{cases} 
\frac{1}{3}\kappa^{-1}|\ln(\varepsilon)|&\mbox{if } \alpha=2 \\
\displaystyle\frac{1}{3}\varepsilon^{\frac{3}{1+\alpha}-1}\kappa^{-\frac{3}{1+\alpha}}\frac{3\pi/(1+\alpha)}{\sin(3\pi/(1+\alpha))} & \mbox{if } \alpha>2. 
\end{cases}  
\end{equation}
\end{lemma}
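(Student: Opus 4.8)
The plan is to reduce the energy, up to a remainder bounded uniformly in $\varepsilon$, to the one-dimensional integral $\int_0^{\ell_\varepsilon}\mu_\varepsilon(x_1)^2\,{\rm d}x_1$, and then to evaluate the latter explicitly. The starting observation is a completed-square form of the quadratic form attached to $\mathbb A_\varepsilon$ on $R_\varepsilon$. Writing $a(x):=x_2 H_0'(\mu_\varepsilon(x_1))$, the explicit expression of $\mathbb A_\varepsilon$ in $R_\varepsilon$ gives, for every vector $(p,q)$,
\[
\mathbb A_\varepsilon(p,q)\cdot(p,q)=(p-a\,q)^2+q^2.
\]
Since $u_\varepsilon^s$ is supported in $\overline{R}_\varepsilon$ and vanishes on $D$, it suffices to integrate this identity with $p=\partial_{x_1}u_\varepsilon^s$, $q=\partial_{x_2}u_\varepsilon^s$ over $R_\varepsilon$.

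Next I would compute the gradient on the region $\{x_1>1\}$, where $\chi\equiv1$. Setting $\psi_\varepsilon:=\tfrac12\left[H_\varepsilon(\mu_\varepsilon)-\mu_\varepsilon H_0'(\mu_\varepsilon)\right]$, formula \eqref{ansatz_exp} gives $\partial_{x_1}u_\varepsilon^s=-\mu_\varepsilon+x_2^2\psi_\varepsilon'$ and $\partial_{x_2}u_\varepsilon^s=2x_2\psi_\varepsilon$, while the relations $\mu_\varepsilon'=H_\varepsilon(\mu_\varepsilon)$ and $\mu_\varepsilon H_0''(\mu_\varepsilon)=\alpha H_0'(\mu_\varepsilon)$ yield $\psi_\varepsilon'=-\tfrac12\mu_\varepsilon H_0''(\mu_\varepsilon)H_\varepsilon(\mu_\varepsilon)$. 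Substituting into the completed square, the crucial point is that $p-a\,q=-\mu_\varepsilon+x_2^2 B_\varepsilon$ with $B_\varepsilon:=\psi_\varepsilon'-2H_0'(\mu_\varepsilon)\psi_\varepsilon$; hence, after integrating in $x_2\in(0,1)$, the $x_1$-integrand reduces to $\mu_\varepsilon^2$ plus correction terms proportional to $\mu_\varepsilon B_\varepsilon$, $B_\varepsilon^2$ and $\psi_\varepsilon^2$.

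The heart of the argument, and the step I expect to require the most care, is to show that these corrections contribute only an $O(1)$ term, uniformly in $\varepsilon$, so that they cannot affect the leading blow-up. Here Lemma~\ref{LEM:proper_mu} is decisive: the bounds \eqref{estim_phi_esp:0}--\eqref{estim_phi_esp:2}, together with $\mu_\varepsilon H_0''(\mu_\varepsilon)=\alpha H_0'(\mu_\varepsilon)$, give $|\psi_\varepsilon|\leqslant C(1+x_1)^{-1-\frac1\alpha}$ and $|B_\varepsilon|\leqslant C(1+x_1)^{-2-\frac1\alpha}$ with $C$ independent of $\varepsilon$. Consequently $|\mu_\varepsilon B_\varepsilon|$, $B_\varepsilon^2$ and $\psi_\varepsilon^2$ are all $O\big((1+x_1)^{-2-\frac2\alpha}\big)$, a function integrable on $(0,+\infty)$ independently of $\varepsilon$; adding the bounded contribution of the transition region $\{0<x_1<1\}$ (where $\chi$, $\nabla\chi$ and the bracket in \eqref{ansatz_exp} are bounded), I obtain
\[
\int_{\omega_\varepsilon}\mathbb A_\varepsilon\nabla u_\varepsilon^s\cdot\nabla u_\varepsilon^s\,{\rm d}x=\int_0^{\ell_\varepsilon}\mu_\varepsilon(x_1)^2\,{\rm d}x_1+O(1).
\]

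Finally I would evaluate the surviving integral. The change of variables $x_1=\rho_\varepsilon(\xi_1)$ (so that $\mu_\varepsilon(x_1)=\xi_1$ and, by \eqref{def_rho}, ${\rm d}x_1={\rm d}\xi_1/H_\varepsilon(\xi_1)$), followed by $\xi_1=-t$, turns it into $\int_0^{|\delta|}t^2\,(\kappa t^{1+\alpha}+\varepsilon)^{-1}\,{\rm d}t$. For $\alpha=2$ an antiderivative is $\tfrac1{3\kappa}\ln(\kappa t^3+\varepsilon)$, giving the equivalent $\tfrac13\kappa^{-1}|\ln\varepsilon|$. For $\alpha>2$ the rescaling $t=(\varepsilon/\kappa)^{\frac1{1+\alpha}}\tau$ factors out $\varepsilon^{\frac3{1+\alpha}-1}\kappa^{-\frac3{1+\alpha}}$ and leaves $\int_0^{T_\varepsilon}\tau^2(1+\tau^{1+\alpha})^{-1}\,{\rm d}\tau$ with $T_\varepsilon\to+\infty$; since $\alpha>2$ makes the integrand integrable at infinity, this converges to the constant $\int_0^{+\infty}\tau^2(1+\tau^{1+\alpha})^{-1}\,{\rm d}\tau=\tfrac{\pi/(1+\alpha)}{\sin(3\pi/(1+\alpha))}$, by the standard identity $\int_0^{+\infty}\tau^{s-1}(1+\tau^n)^{-1}\,{\rm d}\tau=\tfrac{\pi/n}{\sin(\pi s/n)}$ with $s=3$ and $n=1+\alpha$. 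Since $\tfrac{\pi/(1+\alpha)}{\sin(3\pi/(1+\alpha))}=\tfrac13\cdot\tfrac{3\pi/(1+\alpha)}{\sin(3\pi/(1+\alpha))}$, this matches exactly the right-hand side of \eqref{asymp_behav_energ}, which completes the proof.
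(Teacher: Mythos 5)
Your proof is correct and follows essentially the same route as the paper's: reduce the energy, via the decay estimates of Lemma~\ref{LEM:proper_mu}, to $\int_0^{\ell_\varepsilon}\mu_\varepsilon^2\,{\rm d}x_1$ plus an $O(1)$ remainder, then undo the change of variables and rescale by $(\varepsilon/\kappa)^{1/(1+\alpha)}$ to identify the stated equivalents. Your completed-square identity $\mathbb A_\varepsilon(p,q)\cdot(p,q)=(p-aq)^2+q^2$ is a small but welcome refinement, since the paper's written proof actually estimates $\int|\nabla u_\varepsilon^s|^2$ and leaves implicit that the off-diagonal contributions of $\mathbb A_\varepsilon$ are likewise uniformly dominated.
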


\begin{theorem}
\label{THEO:bad_case}
When $\alpha\geqslant 2$, the following assertions hold true:
\begin{enumerate}
\item There exists a function $u^r_0\in H^1(\omega_0,{\rm d}m)$ such that
$$\|\nabla u_\varepsilon-\nabla(u_\varepsilon^s+u^r_0)\|_{L^2(\omega_\varepsilon)}\to 0\qquad\text{as }\varepsilon\to 0^+.$$
\item The function $u_0:=u_0^s+u_0^r$ is an infinite energy solution to System~\eqref{eq:uepsilon_2} (with $\varepsilon=0$), in the sense of Definition~\ref{def:solution_2}.
\item The Dirichlet energy of $u_\varepsilon$ behaves as the Dirichlet energy of $u_\varepsilon^s$ as $\varepsilon\to 0^+$.
\end{enumerate}
\end{theorem}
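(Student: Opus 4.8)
The plan is to convert the decomposition $u_\varepsilon=u_\varepsilon^s+u_\varepsilon^r$ set up before the statement into the three conclusions by reducing everything to the abstract convergence result of Theorem~\ref{THEO:convergence}. The only genuine work is to check that the residual data $f_\varepsilon^r:=-f_\varepsilon^s$ and $g_\varepsilon^r:=g_\varepsilon-g_\varepsilon^s$ produced by the ansatz \eqref{ansatz_exp} are well prepared, that is, that they lie in $L^2(\omega_0,{\rm d}m^{-1})$ and $L^2(\gamma_0,{\rm d}\sigma^{-1})$ respectively, with $\varepsilon$-uniform norms, and converge as $\varepsilon\to 0^+$. Once this is done, assertions 1 and 2 are immediate consequences of Theorem~\ref{THEO:convergence} and Definition~\ref{def:solution_2}, and assertion 3 follows from an elementary expansion of the energy.

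First I would compute, in the region where $\chi\equiv 1$, the quantities $g_\varepsilon^s=\mathbb A_\varepsilon\nabla u_\varepsilon^s\cdot n$ on $\gamma_\varepsilon^R$ and $f_\varepsilon^s=-\dv(\mathbb A_\varepsilon\nabla u_\varepsilon^s)$ in $\omega_\varepsilon$. Writing $u_\varepsilon^s=P(x_1)+\tfrac12 x_2^2 Q(x_1)$ with $P'=-\mu_\varepsilon$ and $Q=H_\varepsilon(\mu_\varepsilon)-\mu_\varepsilon H_0'(\mu_\varepsilon)$, and using $\mu_\varepsilon'=H_\varepsilon(\mu_\varepsilon)$, a direct calculation with the explicit matrix $\mathbb A_\varepsilon$ gives, on the upper boundary $x_2=1$ (where $n=e_2$),
\[
g_\varepsilon^s=H_\varepsilon(\mu_\varepsilon)-\tfrac12 H_0'(\mu_\varepsilon)Q'+ (H_0'(\mu_\varepsilon))^2Q.
\]
Since $g_\varepsilon=H_\varepsilon(\mu_\varepsilon)$ on $\gamma_\varepsilon^R$, the residual $g_\varepsilon^r=\tfrac12 H_0'(\mu_\varepsilon)Q'-(H_0'(\mu_\varepsilon))^2Q$ carries no leading term. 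Likewise, the ansatz is designed so that the $x_2$-independent part of $-\dv(\mathbb A_\varepsilon\nabla u_\varepsilon^s)$ vanishes identically, leaving $f_\varepsilon^s=x_2^2\,R(x_1)$ for an explicit combination $R$ of $Q,Q',Q''$ and of $H_0'(\mu_\varepsilon),H_0''(\mu_\varepsilon)$. Invoking the decay estimates of Lemma~\ref{LEM:proper_mu} (which also yield $|Q|\leqslant C(1+x_1)^{-1-1/\alpha}$ and $|Q'|\leqslant C(1+x_1)^{-2-1/\alpha}$) gives the uniform bounds $|g_\varepsilon^r|\leqslant C(1+x_1)^{-3-1/\alpha}$ and $|R|\leqslant C(1+x_1)^{-3-1/\alpha}$, from which one checks that $g_\varepsilon^r\in L^2(\gamma_0,{\rm d}\sigma^{-1})$ and $f_\varepsilon^s\in L^2(\omega_0,{\rm d}m^{-1})$ uniformly in $\varepsilon$. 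The pointwise convergence of $\mu_\varepsilon$ and its derivatives (Lemma~\ref{LEM:proper_mu} and Proposition~\ref{prop:chgt_variables}, valid since $\alpha\geqslant 2>1$) then gives $g_\varepsilon^r\to g_0^r$ and $f_\varepsilon^r\to f_0^r$ by dominated convergence. The contribution of the cut-off region, where $\chi$ depends on $x_1$, is supported in the fixed bounded set $]0,1[\times]0,1[$ and is controlled by the same convergences, hence causes no difficulty.

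With these estimates, assertion 1 is direct: $u_\varepsilon^r$ solves \eqref{eq:FVepsilon} with the well-prepared data $(f_\varepsilon^r,g_\varepsilon^r)$, so Theorem~\ref{THEO:convergence} provides $u_0^r\in H^1_N(\omega_0,{\rm d}m)$ with $\|\nabla u_\varepsilon^r-\nabla u_0^r\|_{L^2(\omega_\varepsilon)}\to 0$, and $\nabla u_\varepsilon-\nabla(u_\varepsilon^s+u_0^r)=\nabla u_\varepsilon^r-\nabla u_0^r$. For assertion 2 I would verify the hypotheses of Definition~\ref{def:solution_2}: $u_0^s$ is smooth, supported in $R_0$, extended by $0$ in $D$, satisfies $\mathbb A_0\nabla u_0^s\cdot n=0$ on $\partial\omega_0\setminus\gamma_0$ by construction, and has infinite Dirichlet energy by Lemma~\ref{LEM:w_var}. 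Since $f_0^r=-f_0^s$ and $g_0^r=g_0-g_0^s$ with $g_0=H_0(\mu_0)$, the assembled data are $f_0=f_0^s+f_0^r=0$ and $g_0=g_0^s+g_0^r=H_0(\mu_0)$, which are precisely those of System~\eqref{eq:uepsilon_2} with $\varepsilon=0$; hence $u_0=u_0^s+u_0^r$ is an infinite energy solution.

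Finally, for assertion 3 I would expand, using the bilinearity of the form $\int_{\omega_\varepsilon}\mathbb A_\varepsilon\nabla\cdot\,\nabla\cdot\,{\rm d}x$,
\[
E_\varepsilon=\int_{\omega_\varepsilon}\mathbb A_\varepsilon\nabla u_\varepsilon^s\cdot\nabla u_\varepsilon^s\,{\rm d}x+2\int_{\omega_\varepsilon}\mathbb A_\varepsilon\nabla u_\varepsilon^s\cdot\nabla u_\varepsilon^r\,{\rm d}x+\int_{\omega_\varepsilon}\mathbb A_\varepsilon\nabla u_\varepsilon^r\cdot\nabla u_\varepsilon^r\,{\rm d}x.
\]
Writing $E_\varepsilon^s$ for the first term, the uniform ellipticity \eqref{estim_A} and the boundedness of $\|\nabla u_\varepsilon^r\|_{L^2(\omega_\varepsilon)}$ (a by-product of assertion 1) bound the last term by a constant, and Cauchy--Schwarz for the inner product induced by $\mathbb A_\varepsilon$ bounds the cross term by $C(E_\varepsilon^s)^{1/2}$. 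Since $E_\varepsilon^s\to+\infty$ by Lemma~\ref{LEM:w_var}, both are $o(E_\varepsilon^s)$, whence $E_\varepsilon\sim E_\varepsilon^s$ as $\varepsilon\to 0^+$; combined with \eqref{asymp_behav_energ} this yields the equivalents \eqref{equiv_energy}. I expect the main obstacle to be the residual computation of the second paragraph: securing the exact cancellation of the $x_2$-independent terms in both $g_\varepsilon^s$ and $f_\varepsilon^s$, and then extracting the sharp $(1+x_1)^{-3-1/\alpha}$ decay that places $f_\varepsilon^r$ and $g_\varepsilon^r$ in the weighted $L^2$ spaces demanded by Theorem~\ref{THEO:convergence}.
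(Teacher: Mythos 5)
Your proposal is correct and follows essentially the same route as the paper: compute the residual data $(f_\varepsilon^r,g_\varepsilon^r)$ left by the ansatz, establish their decay and convergence in the weighted spaces via Lemma~\ref{LEM:proper_mu} and dominated convergence (this is exactly the paper's Lemma~\ref{LEM:conv_f_0}), apply Theorem~\ref{THEO:convergence} for the first two assertions, and compare energies for the third (the paper uses the reverse triangle inequality on $\sqrt{E_\varepsilon}$ and $\sqrt{E_\varepsilon^s}$ where you expand bilinearly and invoke Cauchy--Schwarz --- these are equivalent). Two harmless points to tidy up: the worst term of $\hat f_\varepsilon^s$, namely $3X_1^\varepsilon H_0'(X_1^\varepsilon)^2$, only decays like $(1+x_1)^{-2-1/\alpha}$ rather than your claimed $(1+x_1)^{-3-1/\alpha}$ (still amply sufficient for membership in $L^2(\omega_0,{\rm d}m^{-1})$), and before invoking Theorem~\ref{THEO:convergence} you should record that $(f_\varepsilon^r,g_\varepsilon^r)$ satisfy the compatibility condition \eqref{comp:cond} --- automatic for $\varepsilon>0$ since they are the divergence and boundary flux of the smooth field $\mathbb A_\varepsilon\nabla u_\varepsilon^s$ together with $\int_{\gamma_\varepsilon}g_\varepsilon\,{\rm d}s=0$, and obtained for $\varepsilon=0$ by passing to the limit using Lemma~\ref{lem:conv_l1}.
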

\begin{proof}
For the sake of clarity, we provide a constructive proof to explain how to obtain the ansatz \eqref{ansatz_exp}. Our method can be seen as an adaptation for every $\varepsilon\geqslant 0$ of the multiscale expansion method used in \cite{NazSokTas09} in the case $\varepsilon=0$.

First, recall that the system \eqref{eq:uepsilon} under consideration reads:
\begin{subequations}
\label{sys:u_varepsilon}
\begin{alignat}{3}
-\dv(\mathbb A_\varepsilon\nabla u_\varepsilon)&=0&\quad&\text{in }\omega_\varepsilon\\
\mathbb A_\varepsilon\nabla u_\varepsilon\cdot n&=g_\varepsilon&&\text{on }\gamma_\varepsilon\\
\mathbb A_\varepsilon\nabla u_\varepsilon\cdot n&=0&&\text{on }\partial\omega_\varepsilon\setminus\gamma_\varepsilon,
\end{alignat}
\end{subequations}
where we have set  
$$g_\varepsilon(x)=D\Psi^{-1}_\varepsilon\tau\cdot e_1=H_\varepsilon(X_\varepsilon^1), \qquad\qquad X_\varepsilon^1:=\mu_\varepsilon(x_1).$$ 
On the rectangle $R_\varepsilon$, we have:
\begin{multline}
\label{eq:nablaA}
-\dv (\mathbb A_\varepsilon\nabla u_\varepsilon)(x)=-\partial^2_{x_1^2}u_\varepsilon(x)+2x_2{H'_0}(X_1^\varepsilon)\partial^2_{x_1x_2}u_\varepsilon(x)
-\left[1+\big(x_2{H'_0}(X_1^\varepsilon)\big)^2\right]\partial^2_{x_2^2}u_\varepsilon(x)\\
+{H'_0}(\mu_\varepsilon)\partial_{x_1}u_\varepsilon
-
x_2\big[2{H'_0}(X_1^\varepsilon)^2+{H_0''}(X_1^\varepsilon)H_\varepsilon(X_1^\varepsilon)\big]\partial_{x_2}u_\varepsilon(x).
\end{multline}
We seek an approximate solution to System~\eqref{sys:u_varepsilon} in $R_\varepsilon$ in the form:
\begin{equation}
\label{express_ansatz}
\hat u_\varepsilon^s(x)=v_\varepsilon(X_1^\varepsilon)+H_\varepsilon(X_1^\varepsilon) V_\varepsilon(X_1^\varepsilon,x_2),\quad \forall\,x=(x_1,x_2)\in R_\varepsilon,
\end{equation}
the functions $v_\varepsilon$ and $V_\varepsilon$ being to be determined.
With \eqref{eq:nablaA} we get:
\begin{subequations}
\label{express_w_eps}
\begin{equation}
\label{def:A_var_hatu}
-\dv(\mathbb A_\varepsilon\nabla \hat u_\varepsilon^s)(x)=-{H_\varepsilon}(X_1^\varepsilon)\left[\partial^2_{x^2_2} V_\varepsilon({X_1^\varepsilon},x_2)+{H_\varepsilon}(X_1^\varepsilon) v''_\varepsilon({X_1^\varepsilon})\right]+\hat f^s_\varepsilon(x)
\end{equation}
where 
\begin{multline}
\label{def:hatf}
\hat f^s_\varepsilon(x):=-{H_\varepsilon}(X_1^\varepsilon) \Big\{{H_0''}(X_1^\varepsilon){H_\varepsilon}(X_1^\varepsilon) V_\varepsilon({X_1^\varepsilon},x_2)
+x_2^2{H'_0}(X_1^\varepsilon)^2\partial^2_{x_2^2} V_\varepsilon({X_1^\varepsilon},x_2)\\
+2{H'_0}(X_1^\varepsilon){H_\varepsilon}(X_1^\varepsilon)[\partial_{x_1} V_\varepsilon({X_1^\varepsilon},x_2)-x_2\partial^2_{x_1x_2} V_\varepsilon({X_1^\varepsilon},x_2)]\\
-x_2{H_0''}(X_1^\varepsilon){H_\varepsilon}(X_1^\varepsilon)\partial_{x_2} V_\varepsilon({X_1^\varepsilon},x_2)+H_0({X_1^\varepsilon})^2\partial^2_{x_1^2} V_\varepsilon({X_1^\varepsilon},x_2)\Big\}.
\end{multline}
On $\gamma_\varepsilon^R$, i.e. for $x=(x_1,1)$, $0< x_1<\ell_\varepsilon$ or equivalently $\delta<X_1^\varepsilon<0$, we have:
\begin{equation}
\label{def:bound_A}
\mathbb A_\varepsilon\nabla \hat u_\varepsilon^s\cdot n(x_1,1)={H_\varepsilon}(X_1^\varepsilon)\Big[-v'_\varepsilon({X_1^\varepsilon}){H'_0}(X_1^\varepsilon)+\partial_{x_2} V_\varepsilon({X_1^\varepsilon},1)\Big]+\hat r^s_\varepsilon(x),
\end{equation}
where:
\begin{equation}
\label{def:hatr}
\hat r^s_\varepsilon(x):={H_\varepsilon}(X_1^\varepsilon)\Big\{{H'_0}(X_1^\varepsilon)^2[\partial_{x_2}V_\varepsilon({X_1^\varepsilon},1)-V_\varepsilon({X_1^\varepsilon},1)]
-{H_\varepsilon}(X_1^\varepsilon){H'_0}(X_1^\varepsilon)\partial_{x_1}V_\varepsilon({X_1^\varepsilon},1)\Big\}.
\end{equation}
On the lower boundary $\{(x_1,0)\,:\,0< x_1<\ell_\varepsilon\}$, we have:
\begin{equation}
\label{cond_lower}
\mathbb A_\varepsilon\nabla \hat u_\varepsilon^s\cdot n(x_1,0)=H_\varepsilon(X_1^\varepsilon)\partial_{x_2}V_\varepsilon(X_1^\varepsilon,0).
\end{equation}
Finally, on the vertical right boundary $\{(\ell_\varepsilon,x_2)\,:\,0<x_2<1\}$ we get:
\begin{equation}
\label{cond_right}
\mathbb A_\varepsilon\nabla \hat u_\varepsilon^s\cdot n(\ell_\varepsilon,x_2)=\varepsilon[v'_\varepsilon(0)+\varepsilon\partial_{x_1}V_\varepsilon(0,x_2)].
\end{equation}
\end{subequations}
The function $\hat u_\varepsilon^s$ is supposed to be an ansatz for $u_\varepsilon$, so in view of the expressions \eqref{express_w_eps}, we seek the functions $v_\varepsilon$ and $V_\varepsilon$ in order to cancel the ``leading'' (i.e. less decreasing) terms in \eqref{def:A_var_hatu}. We also want the Neumann boundary conditions for 
$\hat u_\varepsilon$ to approximate ``at best'' the boundary conditions of $u_\varepsilon$. This leads to 
the following one dimensional Neumann system that must be satisfied for every fixed $\delta<X_1^\varepsilon<0$:
\begin{subequations}
\label{sys_for_V_varepsilon}
\begin{alignat}{3}
-\partial^2_{x^2_2} V_\varepsilon(X_1^\varepsilon,\cdot)&=H_\varepsilon(X_1^\varepsilon)v''_\varepsilon(X_1^\varepsilon)&\quad&\text{on }]0,1[\\
\partial_{x_2} V_\varepsilon(X_1^\varepsilon,1)&=1+v'_\varepsilon(X_1^\varepsilon)H'_0(X_1^\varepsilon)\\
\partial_{x_2} V_\varepsilon(X_1^\varepsilon,0)&=0.
\end{alignat}
\end{subequations}
The compatibility condition, necessary for this System to admit solutions, reads:
$$H_\varepsilon(X_1^\varepsilon)v''_\varepsilon(X_1^\varepsilon)+1+v'_\varepsilon(X_1^\varepsilon)H'_0(X_1^\varepsilon)=0,\quad \delta<X_1^\varepsilon<0.$$
It can been rewritten as:
$$\frac{d^2}{dx_1^2}(v_\varepsilon(\mu_\varepsilon(x_1)))=-\mu_\varepsilon'(x_1).$$
We choose as a solution to this EDO, the one which vanishes at $x_1=0$:
\begin{equation}
\label{def:v}
v_\varepsilon(\mu_\varepsilon(x_1))=-\int_0^{x_1} \mu_\varepsilon(s)\,{\rm d}s.
\end{equation}
We deduce that:
$$-H_\varepsilon(X_1^\varepsilon)v''_\varepsilon(X_1^\varepsilon)=1+v_\varepsilon'(X_1^\varepsilon){H'_0}(X_1^\varepsilon)=\frac{H_\varepsilon(X_1^\varepsilon)
-X_1^\varepsilon{H'_0}(X_1^\varepsilon)}
{H_\varepsilon(X_1^\varepsilon)},$$
whence we infer the expression of $V_\varepsilon$:
\begin{equation}
\label{def:V}V_\varepsilon(x_1,x_2)=\frac{1}{2}x_2^2\left[\frac{H_\varepsilon(X_1^\varepsilon)-X_1^\varepsilon{H'_0}(X_1^\varepsilon)}
{H_\varepsilon(X_1^\varepsilon)}\right],
\end{equation}
and then the expression of $\hat u_\varepsilon^s$ in $R_\varepsilon$  thanks to \eqref{express_ansatz}:
\begin{equation}
\label{def:hatw}
\hat u_\varepsilon^s(x_1,x_2)=-\int_0^{x_1} \mu_\varepsilon(s)\,{\rm d}s+
\frac{1}{2}x_2^2\left[{H_\varepsilon(X_1^\varepsilon)-X_1^\varepsilon{H'_0}(X_1^\varepsilon)}\right].
\end{equation}
Reconsidering now the expressions \eqref{def:A_var_hatu} and \eqref{def:bound_A}, we have by construction $-\dv(\mathbb A_\varepsilon\nabla \hat u_\varepsilon^s)=\hat f_\varepsilon^s$ in $\omega_\varepsilon$,  
$\mathbb A_\varepsilon\nabla \hat u_\varepsilon^s\cdot n=\hat r_\varepsilon^s+g_\varepsilon:=\hat g_\varepsilon^s$ on $\gamma_\varepsilon^R$ and $\mathbb A_\varepsilon\nabla \hat u_\varepsilon^s\cdot n=0$ on the lower and right boundaries of $R_\varepsilon$ (according respectively to \eqref{cond_lower} and \eqref{cond_right}).
Recalling that $\chi$ is the cut-off function introduced above Lemma~\ref{LEM:w_var}, we can define $u_\varepsilon^s$ in the whole domain $\omega_\varepsilon$ by setting:
$$u_\varepsilon^s(x):=\chi(x)\hat u_\varepsilon^s(x),\qquad (x\in\omega_\varepsilon),$$
and we recover the announced expression \eqref{ansatz_exp}.

It remains to verify now that $u^s_\varepsilon$ indeed carries the ``singular'' part of $u_\varepsilon$. Straightforward computation leads to:
\begin{alignat*}{3}
-\dv(\mathbb A_\varepsilon\nabla u_\varepsilon^s)&=f_\varepsilon^s&\quad&\text{in }\omega_\varepsilon\\
\mathbb A_\varepsilon \nabla u_\varepsilon^s\cdot n&= g_\varepsilon^s&&\text{on }\gamma_\varepsilon\\
\mathbb A_\varepsilon \nabla u_\varepsilon^s\cdot n&=0&&\text{on }\partial\omega_\varepsilon\setminus\gamma_\varepsilon
\end{alignat*}
where:
\begin{subequations}
\label{def_f_r1}
\begin{alignat}{3}
f^s_\varepsilon&=\left[-\dv(\mathbb A_\varepsilon\nabla \chi)\hat u_\varepsilon^s-2\mathbb A_\varepsilon:(\nabla\chi\otimes\nabla \hat u_\varepsilon^s)\right]+\chi \hat f^s_\varepsilon&\quad&\text{in }\omega_\varepsilon\\
g^s_\varepsilon&=\left[\hat u_\varepsilon^s(\mathbb A_\varepsilon\nabla\chi)\cdot n\right]+\chi \hat g_\varepsilon^s&&\text{on }\gamma_\varepsilon.
\end{alignat}
\end{subequations}
The function $u_\varepsilon^r:=u_\varepsilon-u_\varepsilon^s$ satisfies:
\begin{subequations}
\label{sys:u_eps_r}
\begin{alignat}{3}
-\dv(\mathbb A_\varepsilon\nabla  u_\varepsilon^r)&=f^r_\varepsilon&\quad&\text{in }\omega_\varepsilon\\
\mathbb A_\varepsilon \nabla u_\varepsilon^r\cdot n&= g^r_\varepsilon&&\text{on }\gamma_\varepsilon\\
\mathbb A_\varepsilon \nabla u_\varepsilon^r\cdot n&=0&&\text{on }\partial\omega_\varepsilon\setminus\gamma_\varepsilon,
\end{alignat}
\end{subequations}
where:
\begin{equation}
\label{def_f_r2}
f^r_\varepsilon=-f_\varepsilon^s\text{ in }\omega_\varepsilon\quad\text{and}\quad
g^r_\varepsilon=g_\varepsilon-g_\varepsilon^s\text{ on }\gamma_\varepsilon.
\end{equation}
For every $\varepsilon>0$ the functions $f_\varepsilon^s$ and $g_\varepsilon^s$ satisfy the compatibility condition (they are defined as being respectively the divergence 
and the flux across the boundary of the smooth vector field $\mathbb A_\varepsilon\nabla u^s_\varepsilon$). Moreover, $\int_{\gamma_\varepsilon} g_\varepsilon\,{\rm d}s=0$ for every $\varepsilon\geqslant 0$. We deduce that $f_\varepsilon^r$ and $g_\varepsilon^r$ satisfy the compatibility condition as well (for every  $\varepsilon>0$).

The following Lemma ensures that the ansatz function $u_\varepsilon^s$ does the job it has be designed for:
\begin{lemma}
\label{LEM:conv_f_0}
The function $f^r_0$ belongs to $L^2(\omega_0,{\rm d}m^{-1})$, the function $g^r_0$  belongs to $L^2(\gamma_0,\,{\rm d}\sigma^{-1})$ and they satisfy the compatibility condition \eqref{comp:cond_3}. 
Moreover, we have the following convergence results:
\begin{equation}
\label{eq:converg_f_r}
\|f^r_\varepsilon-f^r_0\|_{L^2(\omega_\varepsilon,{\rm d}m^{-1})}\to 0\quad\text{ and }\quad\|g^r_\varepsilon-g^r_0\|_{L^2(\gamma_\varepsilon,\,{\rm d}\sigma^{-1})}\to 0\text{ as }\varepsilon\to 0^+.
\end{equation}
\end{lemma}
The proof is postponed to Appendix~\ref{SEC:technic}.

To complete the proof of the two firsts points of the theorem, it suffices now to apply Theorem~\ref{THEO:convergence}. 

Finally, the last remaining point results from the second triangular inequality:
$$\left|\left(\int_{\omega_\varepsilon}\mathbb A_\varepsilon \nabla u_\varepsilon\cdot\nabla u_\varepsilon\,{\rm d}x\right)^{\frac{1}{2}}-
\left(\int_{\omega_\varepsilon}\mathbb A_\varepsilon \nabla u^s_\varepsilon\cdot\nabla u^s_\varepsilon\,{\rm d}x\right)^{\frac{1}{2}}\right|
\leqslant \left(\int_{\omega_\varepsilon}\mathbb A_\varepsilon \nabla u^r_\varepsilon\cdot\nabla u^r_\varepsilon\,{\rm d}x\right)^{\frac{1}{2}},
$$
together with Lemma~\ref{LEM:w_var} and again Theorem~\ref{THEO:convergence}.
The proof of the theorem is now complete.
\end{proof}
\section{Back to miscellaneous cases of Section~\ref{sect:preliminary}}
\label{subsect:flat}
In Section~\ref{sect:preliminary} we state some results allowing comparing the Dirichlet energy for different configurations, some of them being not covered by our general study. We show in this Section that the method used to determine the asymptotic of the Dirichlet energy in the previous Section can be adapted to the case where the bottom 
of the solid is locally flat (referred to as ``the flat case'' in the sequel).  
\subsection{The flat case}
We shall now focus on the cases depicted on the left of Fig.~\ref{fig_pic}, i.e. where the bottom of the solid is locally flat. Being more specific, we consider the case where the function 
$H_\varepsilon$ has the following expression (hereafter, in addition to the already defined constants, $\delta'$ is a negative number such that $\delta<\delta'<0$):
$$H_\varepsilon(\xi_1)=\begin{cases}
\hat H_\varepsilon(\xi_1-\delta')&\text{if }\delta\leqslant \xi_1<\delta'\\
\varepsilon&\text{if }\delta'\leqslant \xi_1<0,
\end{cases}$$
where $\hat H_\varepsilon(\xi_1):=\kappa |\xi_1|^{1+\alpha}+\varepsilon$ with $\alpha>2$. Denoting $\hat\delta:=\delta-\delta'<0$, we can define, associated with $\hat H_\varepsilon$ and $\hat\delta$, the functions $\hat\rho_\varepsilon$, $\hat\mu_\varepsilon$, $\hat\Psi_\varepsilon$ and $\hat\Psi^{-1}_\varepsilon$ based on formula \eqref{def_rho}, \eqref{def_mu}, \eqref{def:Psi_on_V} and \eqref{def:Psi-1}. We will also need the constant $\hat\ell_\varepsilon:=\lim_{\xi_1\to 0^+}\hat\rho_\varepsilon(\xi_1)$ . Observe now that, corresponding to $H_\varepsilon$ and using the very same formula as above, we get:
$$\rho_\varepsilon(\xi_1)=\begin{cases}
\hat\rho_\varepsilon(\xi_1-\delta')&\text{if }\delta\leqslant \xi_1<\delta'\\
\frac{1}{\varepsilon}(\xi_1-\delta')+\hat\ell_\varepsilon&\text{if }\delta'\leqslant \xi_1<0,
\end{cases}$$
with $\ell_\varepsilon:=\rho_\varepsilon(0)=\hat\ell_\varepsilon-\frac{\delta'}{\varepsilon}$ ($\ell_0=+\infty$). Notice that the function $\rho_{\varepsilon=0}$ is only defined for $\delta\leqslant \xi_1<\delta'$. For $\varepsilon\geqslant 0$, the inverse of $\rho_\varepsilon$ defined on $[0,\ell_\varepsilon[$ reads:
$$\mu_\varepsilon(x_1)=\begin{cases}
\hat\mu_\varepsilon(x_1)+\delta'&\text{if }0\leqslant x_1<\hat\ell_\varepsilon\\
\varepsilon(x_1-\hat\ell_\varepsilon)+\delta'&\text{if }\hat\ell_\varepsilon\leqslant x_1<\ell_\varepsilon.
\end{cases}$$
We deduce that, for every $\varepsilon\geqslant 0$:
\begin{equation}
\label{speer}
H_\varepsilon(\mu_\varepsilon(x_1))=\begin{cases}\hat H_\varepsilon(\hat\mu_\varepsilon(x_1))&\text{if }0\leqslant x_1<\hat\ell_\varepsilon\\
\varepsilon&\text{if }\hat\ell_\varepsilon\leqslant x_1<\ell_\varepsilon,
\end{cases}
\quad\text{and}\quad
H_\varepsilon^{(k)}(\mu_\varepsilon(x_1))=\begin{cases}\hat H^{(k)}_\varepsilon(\hat\mu_\varepsilon(x_1))&\text{if }0\leqslant x_1<\hat\ell_\varepsilon\\
0&\text{if }\hat\ell_\varepsilon\leqslant x_1<\ell_\varepsilon,
\end{cases}
\end{equation}
where $k\geqslant 1$ stands for the derivative of order $k$ (if well defined).
On the rectangle $R_\varepsilon$, we get:
$$\Psi_\varepsilon^{-1}=\hat\Psi_\varepsilon^{-1}+
\begin{pmatrix}
\delta'\\
0
\end{pmatrix}\text{ in }\hat R_\varepsilon=]0,\hat\ell_\varepsilon[\times]0,1[
\quad
\text{and}
\quad
\Psi^{-1}_\varepsilon(x)=\varepsilon
\begin{pmatrix}
x_1-\hat\ell_\varepsilon\\
x_2\end{pmatrix}+
\begin{pmatrix}
\delta'\\
0
\end{pmatrix}\text{in }[\hat\ell_\varepsilon,\ell_\varepsilon[\times]0,1[.
$$
According to formula \eqref{eq:defAepsilon}, we deduce that:
\begin{equation}
\label{def:barA}
\mathbb A_\varepsilon=\begin{cases} \hat{\mathbb A}_\varepsilon&\text{in }\hat R_\varepsilon:=]0,\hat\ell_\varepsilon[\times]0,1[\\
\varepsilon{\rm Id}&\text{in }[\hat\ell_\varepsilon,\ell_\varepsilon[\times]0,1[.
\end{cases}
\end{equation}
Notice once more that $\hat R_{\varepsilon=0}=R_{\varepsilon=0}=]0,+\infty[\times ]0,1[$. Considering \eqref{def:barA}, 
we deduce that the conclusions of Lemma~\ref{lem:ellipticiteuniform} still holds true. This convergence results is required in Theorem~\ref{THEO:convergence}, while 
all the results of Section~\ref{SEC:function} are completely independent of the change of variables. We can now jump directly to the computation of the ansatz function. 
The general expression \eqref{ansatz_exp} leads to, in our case:
$$u^s_\varepsilon(x)=
\begin{cases}
\hat u_\varepsilon^s(x)-\delta'\chi(x)\left[x_1+\frac{1}{2}x_2^2\hat H_0'(\hat\mu_\varepsilon(x_1))\right]&\text{if }x\in\hat\omega_\varepsilon\\
-\int_0^{\hat\ell_\varepsilon}\hat\mu_\varepsilon(s)\,{\rm d}s+\frac{\varepsilon}{2}(x_2^2-x_1^2+2x_1\ell_\varepsilon-\hat\ell_\varepsilon^2)
&\text{if }x\in[\hat\ell_\varepsilon,\ell_\varepsilon[\times]0,1[,
\end{cases}$$
where $\hat u_\varepsilon^s$ is the ansatz for the problem corresponding to $\hat H_\varepsilon$ and $\chi$ the cut-off function defined above identity \eqref{ansatz_exp}.  
The mirror image of Lemma~\ref{LEM:w_var} is 
\begin{lemma}
The Dirichlet energy of $u_\varepsilon^s$ behaves as follows when $\varepsilon\to 0^+$:
$$\int_{\omega_\varepsilon}\mathbb A_\varepsilon\nabla u_\varepsilon^s\cdot\nabla u_\varepsilon^s\,{\rm d}x\underset{\varepsilon=0}{\sim}\frac{|\delta'|^3}{3}\varepsilon^{-1}.$$
\end{lemma}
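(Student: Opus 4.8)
The strategy is to split $\omega_\varepsilon$ into the transformed cusp region $\hat\omega_\varepsilon$ and the long flat rectangle $\mathcal F_\varepsilon:=\,]\hat\ell_\varepsilon,\ell_\varepsilon[\,\times\,]0,1[$, and to show that the blow-up is produced entirely by $\mathcal F_\varepsilon$ while the contribution of $\hat\omega_\varepsilon$ is of strictly lower order as $\varepsilon\to0^+$. Accordingly I would write
$$\int_{\omega_\varepsilon}\mathbb A_\varepsilon\nabla u_\varepsilon^s\cdot\nabla u_\varepsilon^s\,{\rm d}x=\underbrace{\int_{\hat\omega_\varepsilon}\mathbb A_\varepsilon\nabla u_\varepsilon^s\cdot\nabla u_\varepsilon^s\,{\rm d}x}_{=:I_\varepsilon}+\underbrace{\int_{\mathcal F_\varepsilon}\mathbb A_\varepsilon\nabla u_\varepsilon^s\cdot\nabla u_\varepsilon^s\,{\rm d}x}_{=:J_\varepsilon}$$
and estimate the two terms separately. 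The essential geometric fact is that $\ell_\varepsilon-\hat\ell_\varepsilon=-\delta'/\varepsilon=|\delta'|/\varepsilon\to+\infty$, whereas $\hat\ell_\varepsilon$ grows only like $\varepsilon^{-\alpha/(1+\alpha)}$ by the analogue of \eqref{comp_l_eps} for the profile $\hat H_\varepsilon$.

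For $J_\varepsilon$ I would simply evaluate the ansatz on $\mathcal F_\varepsilon$. Since $\chi\equiv1$ there and $H_\varepsilon(\mu_\varepsilon)=\varepsilon$, $H_0'(\mu_\varepsilon)=0$ on the flat part, the expression \eqref{ansatz_exp} reduces to the displayed flat-case formula and gives $\nabla u_\varepsilon^s=\big(\varepsilon(\ell_\varepsilon-x_1),\varepsilon x_2\big)$. A direct computation from \eqref{eq:defAepsilon} with the explicit $\Psi_\varepsilon^{-1}$ shows that $\mathbb A_\varepsilon$ equals the identity on $\mathcal F_\varepsilon$ (this is forced by continuity with $\hat{\mathbb A}_\varepsilon$ at the interface $x_1=\hat\ell_\varepsilon$, where $\hat\mu_\varepsilon=0$), so the integrand is $|\nabla u_\varepsilon^s|^2=\varepsilon^2\big[(\ell_\varepsilon-x_1)^2+x_2^2\big]$. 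Substituting $t=\ell_\varepsilon-x_1\in\,]0,|\delta'|/\varepsilon[$ yields
$$J_\varepsilon=\varepsilon^2\int_0^{|\delta'|/\varepsilon}\!\!\Big(t^2+\tfrac13\Big)\,{\rm d}t=\frac{|\delta'|^3}{3}\,\varepsilon^{-1}+\frac{|\delta'|}{3}\,\varepsilon,$$
so that $J_\varepsilon\sim\frac{|\delta'|^3}{3}\varepsilon^{-1}$.

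For $I_\varepsilon$ I would use the decomposition $u_\varepsilon^s=\hat u_\varepsilon^s+w_\varepsilon$ on $\hat\omega_\varepsilon$ (where $\mathbb A_\varepsilon=\hat{\mathbb A}_\varepsilon$), with $w_\varepsilon:=-\delta'\chi\big[x_1+\tfrac12 x_2^2\hat H_0'(\hat\mu_\varepsilon)\big]$, and expand the quadratic form into three pieces. The genuine cusp energy $\int_{\hat\omega_\varepsilon}\hat{\mathbb A}_\varepsilon\nabla\hat u_\varepsilon^s\cdot\nabla\hat u_\varepsilon^s$ is exactly the quantity controlled by Lemma~\ref{LEM:w_var} for the profile $\hat H_\varepsilon$ (with $\alpha>2$), hence of order $\varepsilon^{(2-\alpha)/(1+\alpha)}$, and since $\frac{2-\alpha}{1+\alpha}>-1$ this is $o(\varepsilon^{-1})$. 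For $w_\varepsilon$, the estimates \eqref{estim_phi_esp} give $\partial_{x_1}w_\varepsilon=-\delta'+O((1+x_1)^{-2})$ and $\partial_{x_2}w_\varepsilon=O((1+x_1)^{-1})$, so by uniform ellipticity (Lemma~\ref{lem:ellipticiteuniform}) its energy is of order $\hat\ell_\varepsilon\sim\varepsilon^{-\alpha/(1+\alpha)}=o(\varepsilon^{-1})$; the cross term is bounded by the Cauchy-Schwarz inequality by the geometric mean of the two, namely $O(\varepsilon^{(1-\alpha)/(1+\alpha)})=o(\varepsilon^{-1})$ (the cut-off region $\{x_1<1\}$ only contributes $O(1)$). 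Altogether $I_\varepsilon=o(\varepsilon^{-1})$, and adding $J_\varepsilon$ gives the claimed equivalent $\frac{|\delta'|^3}{3}\varepsilon^{-1}$.

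The main obstacle is the control of $I_\varepsilon$: one must verify that none of the three pieces (the cusp energy of $\hat u_\varepsilon^s$, the energy of the correction $w_\varepsilon$, and their cross term) reaches the order $\varepsilon^{-1}$ produced by the flat segment. This is precisely where the decay estimates of Lemma~\ref{LEM:proper_mu} and the growth rate $\hat\ell_\varepsilon\sim\varepsilon^{-\alpha/(1+\alpha)}$ enter, all the relevant exponents remaining strictly above $-1$ exactly because the tangency is of finite order $\alpha$; the genuinely flat segment (morally ``$\alpha=\infty$'') produces the strictly stronger blow-up $\varepsilon^{-1}$, and therefore dominates.
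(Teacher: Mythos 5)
Your proof is correct, and it carries out in detail exactly the computation the paper leaves implicit: the lemma is stated without proof as the ``mirror image'' of Lemma~\ref{LEM:w_var}, whose proof reduces the energy to the dominant term $\int_1^{\ell_\varepsilon}|\mu_\varepsilon(x_1)|^2\,{\rm d}x_1=\int_{\mu_\varepsilon(1)}^0 \xi_1^2\,H_\varepsilon(\xi_1)^{-1}\,{\rm d}\xi_1$; on the flat segment $H_\varepsilon=\varepsilon$ this gives $\int_{\delta'}^0\xi_1^2\varepsilon^{-1}{\rm d}\xi_1=|\delta'|^3/(3\varepsilon)$, which is precisely your $J_\varepsilon$ after the substitution $t=\ell_\varepsilon-x_1$ (your identity $\partial_{x_1}u^s_\varepsilon=\varepsilon(\ell_\varepsilon-x_1)=-\mu_\varepsilon(x_1)$ makes the two computations literally the same). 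Your splitting into $\hat\omega_\varepsilon$ and $\mathcal F_\varepsilon$, the invocation of Lemma~\ref{LEM:w_var} for the $\hat{}$-profile, the $O(\hat\ell_\varepsilon)=O(\varepsilon^{-\alpha/(1+\alpha)})$ bound for the correction $w_\varepsilon$, and the Cauchy--Schwarz control of the cross term all check out, with every exponent strictly above $-1$ as you say. One point deserves a remark: your claim that $\mathbb A_\varepsilon={\rm Id}$ on $\mathcal F_\varepsilon$ contradicts the paper's displayed formula \eqref{def:barA}, which asserts $\mathbb A_\varepsilon=\varepsilon\,{\rm Id}$ there; you are right and \eqref{def:barA} contains a typo, since $D\Psi_\varepsilon^{-1}=\varepsilon\,{\rm Id}$ on the flat part gives $\mathbb A_\varepsilon=\varepsilon^{-2}\cdot\varepsilon^2\,{\rm Id}={\rm Id}$ from \eqref{eq:defAepsilon} (equivalently, the general rectangle formula $\mathbb A_\varepsilon={\rm Id}+x_2H_0'(\mu_\varepsilon)(\cdots)$ with $H_0'\equiv0$), and indeed with $\varepsilon\,{\rm Id}$ the flat contribution would be bounded and the lemma false.
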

Unlike the other cases, neither $\kappa$ nor $\alpha$ appear in the expression of the leading term in the asymptotic expansion of the energy. They would probably play a role
in lower order terms only. This observation lead us to think that the technical condition $\alpha>2$ have to be understood as a regularity assumption for the solid's boundary and is not 
related with the ``strength'' of the cusp of the fluid domain when $\varepsilon=0$.

We can now claim:
\begin{prop}
\label{prop:flat_case}
Theorem~\ref{THEO:bad_case} is true for the flat case. In particular, the Dirichlet energy behaves as follows when $\varepsilon$ goes to 0:
\begin{equation}
\label{equi_flat}
E_\varepsilon\underset{\varepsilon=0}{\sim}\frac{|\delta'|^3}{3}\varepsilon^{-1}.
\end{equation}
\end{prop}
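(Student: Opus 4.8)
The plan is to follow verbatim the architecture of the proof of Theorem~\ref{THEO:bad_case}, the only genuinely new input being the behaviour of the residual data on the additional flat strip $[\hat\ell_\varepsilon,\ell_\varepsilon[\times]0,1[$. I would write $u_\varepsilon=u_\varepsilon^s+u_\varepsilon^r$ with $u_\varepsilon^s$ the explicit ansatz displayed above and $u_\varepsilon^r$ the remainder, and set $f_\varepsilon^r:=-f_\varepsilon^s$, $g_\varepsilon^r:=g_\varepsilon-g_\varepsilon^s$ exactly as in \eqref{def_f_r2}. Two ingredients transfer without change. First, the whole functional framework of Section~\ref{SEC:function} (the weighted spaces, the extension operator, the uniform trace and Poincar\'e--Wirtinger inequalities) is insensitive to the precise change of variables and hence applies verbatim. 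Second, as noted just after \eqref{def:barA}, the matrix $\mathbb A_\varepsilon$ remains uniformly elliptic and converges in $C^0$, so the conclusions of Lemma~\ref{lem:ellipticiteuniform} hold and Theorem~\ref{THEO:convergence} is available.

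The step specific to the flat case is to localise the residuals. On the strip $[\hat\ell_\varepsilon,\ell_\varepsilon[\times]0,1[$ the ansatz is, up to an additive constant, the polynomial $\tfrac12\varepsilon(x_2^2-x_1^2+2x_1\ell_\varepsilon-\hat\ell_\varepsilon^2)$; a direct computation based on \eqref{eq:defAepsilon}, \eqref{speer} and this explicit form shows that $\dv(\mathbb A_\varepsilon\nabla u_\varepsilon^s)$ vanishes there and that the flux $\mathbb A_\varepsilon\nabla u_\varepsilon^s\cdot n$ reproduces exactly the constant datum $g_\varepsilon=\varepsilon$ on the upper side, while vanishing on the lower and right sides. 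Consequently $f_\varepsilon^r$ and $g_\varepsilon^r$ vanish on $[\hat\ell_\varepsilon,\ell_\varepsilon[\times]0,1[$ and are supported in $\hat R_\varepsilon\cup D$. I would also check that $u_\varepsilon^s$ is continuous across the junction $x_1=\hat\ell_\varepsilon$ (using $\hat\mu_\varepsilon(\hat\ell_\varepsilon)=0$, $\hat H_0'(0)=0$ and $\varepsilon\ell_\varepsilon=\varepsilon\hat\ell_\varepsilon-\delta'$), so that $u_\varepsilon^s\in H^1(\omega_\varepsilon)$ and no interface term is produced. Once the residuals are confined to $\hat R_\varepsilon$, the geometry there is that of a genuine cusp of exponent $\alpha>2$ merely translated by $\delta'$, and the flat-case analogue of Lemma~\ref{LEM:conv_f_0} --- namely $f_0^r\in L^2(\omega_0,{\rm d}m^{-1})$, $g_0^r\in L^2(\gamma_0,{\rm d}\sigma^{-1})$, the compatibility condition \eqref{comp:cond_3}, and $\|f_\varepsilon^r-f_0^r\|_{L^2(\omega_\varepsilon,{\rm d}m^{-1})}\to0$, $\|g_\varepsilon^r-g_0^r\|_{L^2(\gamma_\varepsilon,{\rm d}\sigma^{-1})}\to0$ --- follows from the same estimates as in the super-critical case, governed by the decay bounds of Lemma~\ref{LEM:proper_mu} applied to $\hat\mu_\varepsilon$.

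With these facts in hand, Theorem~\ref{THEO:convergence} applied to the remainder problem yields $u_0^r\in H^1_N(\omega_0,{\rm d}m)$ together with $\|\nabla u_\varepsilon-\nabla(u_\varepsilon^s+u_0^r)\|_{L^2(\omega_\varepsilon)}\to0$, which gives the first two assertions of Theorem~\ref{THEO:bad_case} in the flat setting, the infinite-energy solution being understood in the sense of Definition~\ref{def:solution_2}. For the energy I would argue as in the closing lines of the proof of Theorem~\ref{THEO:bad_case}: the convergence just obtained forces $\int_{\omega_\varepsilon}\mathbb A_\varepsilon\nabla u_\varepsilon^r\cdot\nabla u_\varepsilon^r\,{\rm d}x$ to remain bounded, while the flat-case energy Lemma gives $\int_{\omega_\varepsilon}\mathbb A_\varepsilon\nabla u_\varepsilon^s\cdot\nabla u_\varepsilon^s\,{\rm d}x\sim\frac{|\delta'|^3}{3}\varepsilon^{-1}\to+\infty$; the second triangular inequality in the $\mathbb A_\varepsilon$-seminorm then yields $E_\varepsilon\sim\frac{|\delta'|^3}{3}\varepsilon^{-1}$, which is \eqref{equi_flat}. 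The main obstacle is concentrated in the localisation and matching step: one must verify that the strip on which $\mathbb A_\varepsilon$ becomes a constant matrix produces no residual (this is precisely what makes the polynomial ansatz reproduce the constant datum $g_\varepsilon=\varepsilon$ exactly) and that $u_\varepsilon^s$ glues in $H^1$ at $x_1=\hat\ell_\varepsilon$; after that, the problem reduces literally to the super-critical cusp problem on $\hat R_\varepsilon$, where nothing new occurs.
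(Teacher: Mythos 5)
Your proposal is correct and follows essentially the same route as the paper: both reduce the statement to verifying the flat-case analogue of Lemma~\ref{LEM:conv_f_0}, the key point being that the residuals $\hat f^s_\varepsilon$, $\hat r^s_\varepsilon$ vanish identically on the flat strip $[\hat\ell_\varepsilon,\ell_\varepsilon[\times]0,1[$ (every term in \eqref{les_trucs} carries at least one derivative of $H_0$, which is zero there by \eqref{speer} --- equivalently, your observation that the polynomial ansatz solves the constant-coefficient problem exactly) and retain enough decay on $\hat R_\varepsilon$. The only caution is that the reduction to the cusp case is not quite ``literal'': $\mu_\varepsilon=\hat\mu_\varepsilon+\delta'$ does not tend to $0$, so \eqref{estim_phi_esp:0} fails even on $\hat R_\varepsilon$, and the required decay must come entirely from the derivative factors $H_0'(\mu_\varepsilon)$, $H_0''(\mu_\varepsilon)$, $H_\varepsilon(\mu_\varepsilon)$ --- which is precisely the subtlety the paper flags.
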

\begin{proof}
It suffices to verify that 
Lemma~\ref{LEM:conv_f_0} and more precisely that the expressions \eqref{les_trucs} are in the appropriate function spaces. There is a subtlety here because the decay 
properties \eqref{estim_phi_esp} are not true in our case. However, with \eqref{speer} and  \eqref{les_trucs} and since there is at least one derivative of $H_0$ in every product arising in the right hand side 
of the expressions \eqref{les_trucs}, it can be verify that we still get enough decay rate to get the conclusion. 
\end{proof}

Considering the problem of collision, we can apply Lemma~\ref{lem_intro} to get:
\begin{prop}
In the flat case, the solid reaches the cavity's bottom in finite time with null velocity (smooth landing case).
\end{prop}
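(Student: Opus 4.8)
The plan is to reduce the statement entirely to the abstract criterion provided by Lemma~\ref{lem_intro}, which converts a quantitative bound on the blow-up rate of the added mass into a conclusion about the collision dynamics governed by the Cauchy problem~\eqref{cauchy:pb}. That lemma requires exactly two ingredients: first, that the energy diverges, $E_\varepsilon\to +\infty$ as $\varepsilon\to 0^+$; and second, that the divergence is controlled by a power $\varepsilon^{-\beta}$ with $\beta$ strictly below the critical threshold $2$. My strategy is simply to read off both of these from the energy equivalence already established in the flat case.

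Concretely, the first step is to invoke Proposition~\ref{prop:flat_case}, which gives the sharp asymptotics
$$
E_\varepsilon\underset{\varepsilon=0}{\sim}\frac{|\delta'|^3}{3}\,\varepsilon^{-1}.
$$
Since $\varepsilon^{-1}\to +\infty$ as $\varepsilon\to 0^+$, the first hypothesis of Lemma~\ref{lem_intro} is immediate. For the second, I would choose $\beta=1$, which satisfies $\beta<2$; the equivalence above then yields $E_\varepsilon=O(\varepsilon^{-1})=O(\varepsilon^{-\beta})$, so the second hypothesis holds as well. With both hypotheses verified, Lemma~\ref{lem_intro} applies verbatim and delivers the conclusion: the solid reaches the cavity's bottom in finite time with null velocity (smooth landing case).

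The only point that deserves a word of care is the translation between the transformed energy $E_\varepsilon$ appearing in Proposition~\ref{prop:flat_case} and the physical added mass $m_f(\varepsilon)$ entering Lemma~\ref{lem_intro} through~\eqref{cauchy:pb}. This, however, is harmless: the change of variables $\Psi_\varepsilon$ preserves the Dirichlet energy exactly, as recorded by the identity $E_\varepsilon=\int_{\omega_\varepsilon}\mathbb A_\varepsilon\nabla u_\varepsilon\cdot\nabla u_\varepsilon\,{\rm d}x$, and by~\eqref{added_mass} the added mass coincides with $E_\varepsilon$ up to the fixed multiplicative constant $\varrho_f$. Consequently $m_f(\varepsilon)$ and $E_\varepsilon$ share the same blow-up rate $\varepsilon^{-1}$, and the hypotheses of Lemma~\ref{lem_intro} may be checked indifferently on either quantity. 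I do not anticipate any genuine obstacle here: the substantive work has already been carried out in establishing the energy asymptotics of Proposition~\ref{prop:flat_case}, and the present statement is a direct corollary obtained by feeding that rate into the threshold criterion $\beta<2$.
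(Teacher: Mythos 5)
Your proposal is correct and follows exactly the paper's route: the paper likewise deduces this proposition by feeding the flat-case asymptotics $E_\varepsilon\sim\frac{|\delta'|^3}{3}\varepsilon^{-1}$ of Proposition~\ref{prop:flat_case} into the criterion of Lemma~\ref{lem_intro} with $\beta=1<2$. Your extra remark on identifying $m_f(\varepsilon)$ with $E_\varepsilon$ up to the constant $\varrho_f$ is accurate and consistent with \eqref{added_mass}.
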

\subsection{Other cases}
Combining \eqref{equi_flat} with the results of Section~\ref{sect:preliminary}, we can easily deduce the following:
\begin{itemize}
\item In the situation depicted in the right of Fig.~\ref{fig_pic}, the solid will collide with the outer boundary in finite time with null velocity (smooth landing case).
\item In both configurations in the right of Fig.\ref{fig2}, the solid will behave the same way when approaching the outer boundary and reach it in finite time. It is not possible (without further computations) to determine 
wether the velocity is null or not at the touching time.
\end{itemize} 

\appendix
\section{Construction of the change of variables}
\label{App:chg_of_vari}
In this Section, we aim to construct the diffeomorphisms $\Psi_\varepsilon$ ($0\leqslant \varepsilon\leqslant \varepsilon^\ast$) earlier introduced in Section~\ref{sect:Psi}. 

Recall the definitions \eqref{def:mathcalV} of $\mathcal V_\varepsilon$ ($0\leqslant \varepsilon\leqslant \varepsilon^\ast$). 
Since we need to emphasize the dependance in $\delta$, we denote it rather $\mathcal V_\varepsilon(\delta)$ in this Section and we 
recall that $\Psi_\varepsilon$ has already been defined in $\mathcal V_\varepsilon(\delta)$ in Section~\ref{sect:Psi}. 

To simplify the construction, there is no loss of generality in assuming that (up to a rescaling) $H_0(\delta)=1$. 

Let us define $\hat{\Psi}_\varepsilon:=\Psi_\varepsilon+\delta e_1$ and notice that, on the left vertical boundary of $\mathcal V_\varepsilon(\delta)$, we have:
$$\hat\Psi((\delta,\xi_2))=\left(\delta,\frac{\xi_2}{1+\varepsilon}\right),\qquad (0<\xi_2<1+\varepsilon).$$
Starting from this observation, our leading idea is to extend $\hat\Psi_\varepsilon$ (rather than $\Psi_\varepsilon$) as a perturbation of the identity in $\Omega_\varepsilon\setminus\overline{\mathcal V}_\varepsilon(\delta)$.
We proceed in several steps.\\

\noindent{\bf First step. }We introduce an open cover $(U_k)_{1\leqslant k\leqslant 3}$ of $\Omega_\varepsilon$ (see Figure~\ref{patch}), such that:
\begin{enumerate}
\item $\Omega_\varepsilon\subset U_1\cup U_2\cup U_3$ for every $\varepsilon\geqslant 0$ small enough;
\item There exists $\delta''<\delta'<\delta$ such that:
$$\mathcal V_\varepsilon(\delta')\subset U_1\subset \mathcal V_\varepsilon(\delta'')\quad\text{and}\quad U_k\cap \mathcal V_\varepsilon(\delta')=\varnothing\quad(k=2,3),$$
for every $\varepsilon\geqslant 0$ small enough;
\item $\Gamma_\varepsilon\cap U_3=\varnothing$ for every $\varepsilon\geqslant 0$ small enough.
\end{enumerate}
Consider $(\chi_k)_{1\leqslant k\leqslant 3}$ a partition of unity subordinated to the open cover $U_k$ ($k=1,2,3$) and let us define $\hat\Psi_\varepsilon^k$ ($k=1,2,3$) three functions respectively defined in $U_1$, $U_2$ 
and $U_3$ and out of which we are going to build $\hat\Psi_\varepsilon$.

\begin{figure}[h]
\centering
\input{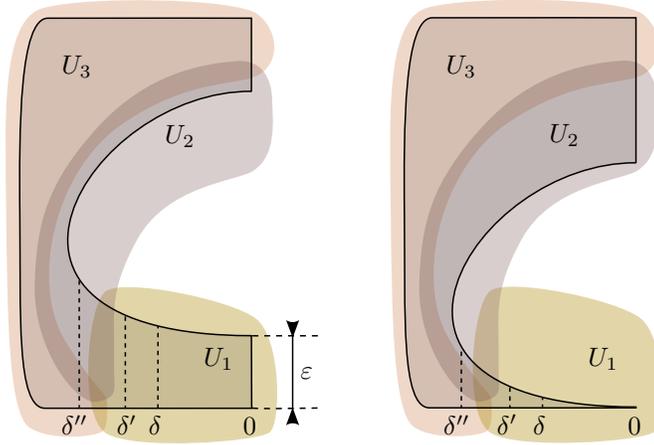}
\caption{\label{patch}The open cover $(U_k)_{1\leqslant k\leqslant 3}$ of $\Omega_\varepsilon$  (for $\varepsilon>0$ and $\varepsilon=0$).}
\end{figure}

\noindent{\bf Step 2. }(Construction of $\hat\Psi_\varepsilon^1$). For every $\varepsilon\geqslant 0$, we define the function
\begin{subequations}
\label{new_def_rho}
\begin{equation}
\hat\rho_\varepsilon(\xi_1)=\delta+\int_\delta^{\xi_1} F_\varepsilon(t)\,{\rm d}t\qquad (\delta''<\xi_1<0),
\end{equation}
where the function $F_\varepsilon\in C^0([\delta'',0[)$ is given by:
\begin{equation}
F_\varepsilon(t)=\begin{cases}
1&\text{if }\xi_1<\delta'\\
\frac{\varepsilon}{1+\varepsilon}\left(a\,t^2+b\,t+c\right)+1&\text{if }\delta'<\xi_1\leqslant \delta\\
H_\varepsilon(t)^{-1}&\text{if }\xi_1>\delta,
\end{cases}
\end{equation}
with 
\begin{equation}
a=-{\frac {3}{ \left( \delta-
\delta' \right) ^{2}}},\quad
b={\frac {2\, \left( \delta+2\,\delta'
 \right) }{ \left( \delta-\delta' \right) ^{2} }},\quad 
c=-\frac{\,\delta'(2\delta+\delta')}{(\delta-\delta')^2}.
\end{equation}
\end{subequations}
We set $\hat\ell_\varepsilon:=\hat\rho_\varepsilon(0)$ ($\varepsilon>0$) and $\hat\ell_0:=+\infty$.

For $\varepsilon\geqslant 0$ small enough, $F_\varepsilon$ is positive and we denote $\hat\mu_\varepsilon:]\delta'',\hat\ell_\varepsilon[\to ]\delta'',0[$ the 
inverse of $\hat\rho_\varepsilon$. 

Define now a $C^1$ function $\hat H$ on $]\delta'',0]$, as on Figure~\ref{pict:phi}, satisfying in particular:
$$\hat H_0(\xi_1)=
\begin{cases}
H_0(\xi_1)&\text{if }\xi_1>\delta\\
1&\text{if }\xi_1<\delta'.
\end{cases}$$

\begin{figure}[h]
\centering
\input{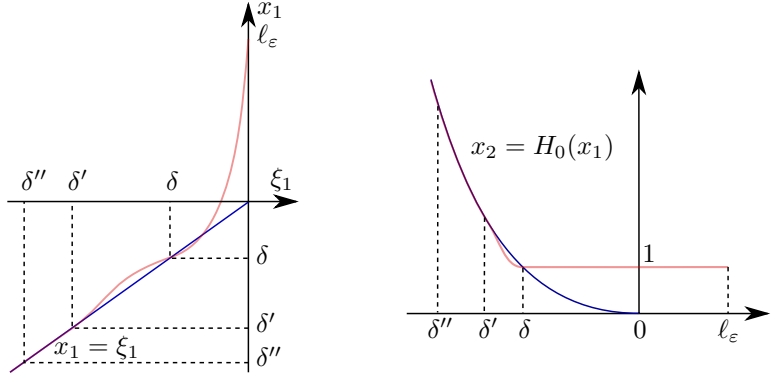}
\caption{\label{pict:phi} Graph of the functions $\hat\rho_\varepsilon$ (left) and $\hat H_0$ (right).}
\end{figure}

The function $\hat\Psi^1_\varepsilon$  is defined on $\mathcal V_\varepsilon(\delta'')$ as follows:
\begin{subequations}
\label{new_def_psi_1}
\begin{equation}
\Psi_\varepsilon^1(\xi)=
\begin{pmatrix}\hat\rho_\varepsilon(\xi_1)\\
\kappa_\varepsilon(\xi)\xi_2-\varepsilon\chi_2(\xi)
\end{pmatrix}
\end{equation}
where 
\begin{equation}
\kappa_\varepsilon(\xi)=\frac{\hat H_0(\hat\rho_\varepsilon(\xi_1))+\varepsilon\chi_2(\xi)}{H_\varepsilon(\xi_1)}.
\end{equation}
\end{subequations}

\noindent{\bf Step 3. }We introduce $\hat\Psi^2_\varepsilon={\rm Id}-\varepsilon e_2$ and $\hat\Psi^3_\varepsilon={\rm Id}$ ($\varepsilon\geqslant0$)
and we claim that the function $\hat\Psi_\varepsilon$ defined by:
$$\hat\Psi_\varepsilon:=\sum_{k=1}^3\chi_k \hat\Psi^k_\varepsilon,$$
fulfilled the requirements. More precisely, setting $\Psi_\varepsilon=\hat\Psi_\varepsilon-\delta e_1$ ($\varepsilon\geqslant0$), we have:

\begin{prop}
\label{prop:suite}
The function $\Psi_\varepsilon$ enjoys the following properties:
\begin{enumerate}
\item For every $\varepsilon\geqslant0$, the set $D:=\Psi_\varepsilon(\Omega_\varepsilon\setminus \overline{\mathcal V}_\varepsilon(\delta))$ is independent of $\varepsilon$. 
\item For every $\varepsilon\geqslant0$, $\Psi_\varepsilon$ is $C^1$, invertible and $\Psi_\varepsilon^{-1}$ is $C^1$.
\item $\|\Psi_\varepsilon^{-1}-\Psi_0^{-1}\|_{C^1(\overline{D})}\to 0$ as $\varepsilon\to 0^+$.
\end{enumerate}
\end{prop}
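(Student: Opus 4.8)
The plan is to establish the three claims by dissecting the explicitly constructed blend $\hat\Psi_\varepsilon=\sum_{k=1}^3\chi_k\hat\Psi_\varepsilon^k$ region by region, exploiting the nesting $\mathcal V_\varepsilon(\delta')\subset U_1\subset\mathcal V_\varepsilon(\delta'')$ together with the disjointness properties $U_k\cap\mathcal V_\varepsilon(\delta')=\varnothing$ ($k=2,3$) and $\Gamma_\varepsilon\cap U_3=\varnothing$. I would choose the $U_k$ and the subordinate partition $\chi_k$ independent of $\varepsilon$ in the $\xi$-coordinates (this is possible since the $U_k$ are fixed by $\delta,\delta',\delta''$). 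The guiding principle is that the coefficients $a,b,c$ in \eqref{new_def_rho}, the $\varepsilon$-independent profile $\hat H_0$, and the correction term $-\varepsilon\chi_2 e_2$ in \eqref{new_def_psi_1} are precisely tuned so that, on each overlap, the three local maps either coincide or blend to an image that no longer sees $\varepsilon$.

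For the first claim I would start with the model region $\mathcal V_\varepsilon(\delta')\setminus\overline{\mathcal V}_\varepsilon(\delta)$, on which $\chi_1\equiv 1$ and hence $\hat\Psi_\varepsilon=\hat\Psi_\varepsilon^1$. There the choice of $a,b,c$ forces $\hat\rho_\varepsilon(\delta')=\delta'$ and $\hat\rho_\varepsilon(\delta)=\delta$ (one checks the two endpoint matchings together with $\int_{\delta'}^\delta(at^2+bt+c)\,{\rm d}t=0$), so that $\{\delta'<\xi_1<\delta,\,0<\xi_2<H_\varepsilon(\xi_1)\}$ is sent bijectively, via $(\xi_1,\xi_2)\mapsto(\hat\rho_\varepsilon(\xi_1),\hat H_0(\hat\rho_\varepsilon(\xi_1))\,\xi_2/H_\varepsilon(\xi_1))$, onto the fixed set $\{\delta'<x_1<\delta,\,0<x_2<\hat H_0(x_1)\}$: the $\varepsilon$-dependent reparametrisation in $x_1$ and the height rescaling cancel exactly. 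The remaining part of $\Omega_\varepsilon\setminus\overline{\mathcal V}_\varepsilon(\delta)$ lies in $\{\xi_1<\delta'\}$ or away from the sliver, where $\hat\rho_\varepsilon=\mathrm{id}$ and $\hat H_0\equiv 1$; on $\Gamma_\varepsilon$ one computes $\hat\Psi_\varepsilon^1(\xi_1,H_\varepsilon(\xi_1))=(\hat\rho_\varepsilon(\xi_1),\hat H_0(\hat\rho_\varepsilon(\xi_1)))$, the $-\varepsilon\chi_2$ term cancelling the excess in $\kappa_\varepsilon H_\varepsilon$, so that the image of the solid boundary is the $\varepsilon$-independent graph of $\hat H_0$, while $\hat\Psi_\varepsilon^3=\mathrm{Id}$ fixes $\partial\mathsf C$ and the interface $\{\xi_1=\delta\}$ maps onto $\{0\}\times]0,1[$. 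Concluding that $D$ is $\varepsilon$-free then amounts to checking, overlap by overlap, that these boundary images glue consistently; since by the second claim each $\Psi_\varepsilon$ is a homeomorphism, the fixed boundary determines a fixed image. This bookkeeping on the overlaps is the main obstacle of the proof.

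For the second claim, $C^1$-regularity of $\Psi_\varepsilon$ is immediate from the $C^1$-regularity of each $\hat\Psi_\varepsilon^k$ (the density $F_\varepsilon$ in \eqref{new_def_rho} is continuous by the endpoint conditions, $\kappa_\varepsilon$ and $\hat H_0$ are $C^1$, and the $\chi_k$ are smooth). Invertibility I would obtain by computing $\det D\Psi_\varepsilon$: on the interior of $\mathcal V_\varepsilon(\delta')$ the map is triangular and the determinant reduces to $F_\varepsilon\,\kappa_\varepsilon>0$, while on $U_2,U_3$ the map is the identity up to the shift and a blending term controlled by $\varepsilon$, so the Jacobian stays close to the identity for $\varepsilon$ small; local invertibility then upgrades to a global diffeomorphism because $\Psi_\varepsilon$ is proper and maps $\partial\Omega_\varepsilon$ bijectively onto the fixed boundary of $D$, and $\Psi_\varepsilon^{-1}\in C^1$ follows from the inverse function theorem.

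Finally, for the $C^1$ convergence on $\overline D$, I would note that $F_\varepsilon\to F_0$, $\kappa_\varepsilon\to\kappa_0$ and $\hat\Psi_\varepsilon^2=\mathrm{Id}-\varepsilon e_2\to\mathrm{Id}=\hat\Psi_0^2$, so that $\Psi_\varepsilon\to\Psi_0$ in $C^1$ on the domain side, the cusp piece being controlled by \eqref{unif_conv} of Lemma~\ref{LEM:proper_mu}. Since $D$ is the same fixed set for every $\varepsilon$ and the Jacobians $D\Psi_\varepsilon$ are uniformly nondegenerate, the identity $D\Psi_\varepsilon^{-1}=(D\Psi_\varepsilon)^{-1}\circ\Psi_\varepsilon^{-1}$ transfers this into $\|\Psi_\varepsilon^{-1}-\Psi_0^{-1}\|_{C^1(\overline D)}\to 0$ by the standard stability of inversion under $C^1$ perturbation. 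The delicate point throughout remains the exact, and not merely approximate, $\varepsilon$-independence in the first claim, which is exactly what the tailored coefficients $a,b,c$ and the $-\varepsilon\chi_2 e_2$ correction are designed to guarantee.
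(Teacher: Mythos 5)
Your treatment of the first and third claims follows the paper's own route and your explicit verifications are correct: the identities $\int_{\delta'}^{\delta}(at^2+bt+c)\,{\rm d}t=0$ and $a\delta^2+b\delta+c=-1$ do force $\hat\rho_\varepsilon(\delta')=\delta'$, $\hat\rho_\varepsilon(\delta)=\delta$ and the continuity of $F_\varepsilon$, the $-\varepsilon\chi_2$ correction does send $\Gamma_\varepsilon$ onto the graph of $\hat H_0$, and the Jacobian on $\mathcal V_\varepsilon(\delta')$ is indeed triangular with determinant $F_\varepsilon\kappa_\varepsilon>0$. The paper's proof of point~1 is exactly this computation of the images of the boundary pieces, and its proof of point~3 is the quantitative version of your ``stability of inversion'' step (a Neumann series for $(D\hat\Psi_\varepsilon)^{-1}={\rm Id}+\varepsilon G_\varepsilon$ on the part where $\hat\Psi_\varepsilon={\rm Id}+\varepsilon F_\varepsilon$, and the explicit formula \eqref{express:hatPsi} for the inverse on the cusp part), so those two parts are essentially the same argument.

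The genuine weak point is your globalization of invertibility. You pass from $\det D\Psi_\varepsilon\neq 0$ to a global diffeomorphism by asserting that $\Psi_\varepsilon$ is proper and maps $\partial\Omega_\varepsilon$ bijectively onto a fixed boundary; but the injectivity of the boundary restriction on the overlaps $U_1\cap U_2$ and $U_2\cap U_3$ is precisely the ``bookkeeping'' you deferred, the image boundary is $\partial\omega_\varepsilon$ (which depends on $\varepsilon$ through $\ell_\varepsilon$), and for $\varepsilon=0$ both $\Omega_0$ and $\omega_0$ are unbounded, so the degree/covering argument needs an additional treatment at infinity. There is also a circularity as written: your claim~1 invokes claim~2 (``since each $\Psi_\varepsilon$ is a homeomorphism\ldots''), while your claim~2 invokes the fixed boundary image from claim~1. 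The paper avoids all of this by a more elementary mechanism: on $\mathcal U_\varepsilon=\Omega_\varepsilon\setminus\mathcal V_\varepsilon(\delta')$ one has $\hat\Psi_\varepsilon={\rm Id}+\varepsilon F_\varepsilon$ with $F_\varepsilon$ Lipschitz uniformly in $\varepsilon$, so $\hat\Psi_\varepsilon(\xi)=\hat\Psi_\varepsilon(\xi')$ forces $|\xi-\xi'|\leqslant\varepsilon L|\xi-\xi'|$ and hence \emph{global} injectivity there for $\varepsilon$ small, with no boundary analysis; on $\mathcal V_\varepsilon(\delta')$ the inverse is written down explicitly; and the two pieces are glued by checking the non-overlapping of their images, condition \eqref{non_overlapping} -- a step your argument never addresses. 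You should replace the properness/degree step by this contraction estimate together with an explicit check that $\hat\Psi_\varepsilon(\mathcal V_\varepsilon(\delta'))$ and $\hat\Psi_\varepsilon(\mathcal U_\varepsilon)$ are disjoint; the rest of your proof then goes through.
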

\begin{proof}
Since $\hat\Psi_\varepsilon$ and $\Psi_\varepsilon$ only differ in a translation, the proof is carried out with $\hat\Psi_\varepsilon$ instead of $\Psi_\varepsilon$.

The first point is easily verified by computing the image of the boundary of $\Omega_\varepsilon\setminus \overline{\mathcal V}_\varepsilon(\delta)$. Indeed, denoting 
$$\hat{\mathcal V}_\varepsilon:=\{(x_1,x_2)\in\Omega_\varepsilon\,:\,\delta''<x_1<\delta,\,0<\xi_2<H_\varepsilon(\xi_1)\},$$
we have:
\begin{align*}
\hat\Psi_\varepsilon(\Gamma_\varepsilon\cap \hat{\mathcal V}_\varepsilon)&=\{x=(x_1,x_2)\in \mathbb R^2\,:\,\delta''<\,x_1<\delta,\,x_2=\hat H_0(x_1)\}\\
\hat\Psi_\varepsilon(\Gamma_\varepsilon\cap U_2)&=\Gamma\cap U_2\\
\hat\Psi_\varepsilon(\Gamma'_\varepsilon \cap (U_3\cup \hat{\mathcal V}_\varepsilon))&=\Gamma'\cap (U_3\cup \hat{\mathcal V}_\varepsilon).
\end{align*}

For the second and third points of the Proposition, we proceed  as follows: 
\begin{enumerate}
\renewcommand{\theenumi}{\alph{enumi}}
\renewcommand{\labelenumi}{\theenumi}
\item We prove that $\hat\Psi_\varepsilon:\mathcal V_\varepsilon(\delta')\mapsto R'_\varepsilon$, where:
$$R_\varepsilon':=\{(x_1,x_2)\in\mathbb R^2\,:\,\delta'<x_1<\ell_\varepsilon,\, 0<x_2<\hat H_0(x_1)\},$$
is a $C^1$ diffeomorphism by studying its inverse, which can be made explicit. With the expression of this inverse, we also prove rather easily that
$\|\hat \Psi_\varepsilon^{-1}-\hat\Psi_0^{-1}\|_{C^1(\overline R'_\varepsilon)}\to 0$ as $\varepsilon\to 0^+$. 
\item By noticing that $\hat\Psi_\varepsilon$ is a $C^1$, $\varepsilon$-perturbation of the identity in $\Omega_\varepsilon\setminus \mathcal V_\varepsilon(\delta')$, we prove that 
$\hat\Psi_\varepsilon$ is also a $C^1$ diffeomorphism from $\Omega_\varepsilon\setminus \mathcal V_\varepsilon(\delta')$ onto its image and that
$\|\hat \Psi_\varepsilon^{-1}-\hat\Psi_0^{-1}\|_{C^1(\overline{\Psi_\varepsilon(\Omega_\varepsilon\setminus \mathcal V_\varepsilon(\delta')}))}\to 0$ as $\varepsilon\to 0^+$. 
\item We get the conclusion of the Proposition by remarking that 
\begin{equation}
\label{local_diffeo}
\det D\hat\Psi_\varepsilon\neq 0\quad\text{in }\Omega_\varepsilon,
\end{equation}
and 
\begin{equation}
\label{non_overlapping}
\hat\Psi_\varepsilon(\mathcal V_\varepsilon(\delta'))\cap \hat\Psi_\varepsilon(\Omega_\varepsilon\setminus \mathcal V_\varepsilon(\delta'))=\varnothing.
\end{equation}
\end{enumerate}
The inverse of $\hat\Psi_\varepsilon$ in $\mathcal V_\varepsilon(\delta')$ is:
\begin{equation}
\label{express:hatPsi}
\hat\Psi_\varepsilon^{-1}(x)=
\begin{pmatrix}
\hat\mu_\varepsilon(x_1)\\
\frac{H_\varepsilon(\hat\mu_\varepsilon(x_1))}{\hat H_0(x_1)}x_2
\end{pmatrix}.
\end{equation}
Following the lines of the proof of Proposition~\ref{prop:chgt_variables}, we verify that $\hat\Psi_\varepsilon:\mathcal V_\varepsilon(\delta')\mapsto R'_\varepsilon$
is indeed a $C^1$ diffeomorphism and that $\|\hat\Psi_\varepsilon^{-1}-\hat\Psi^{-1}_0\|_{C^1(\overline R'_\varepsilon)}\to 0$ as $\varepsilon\to 0^+$. 

Let us denote now $\mathcal U_\varepsilon:=\Omega_\varepsilon\setminus{\mathcal V}_\varepsilon(\delta')$. With our construction, we get, for every $\xi\in \mathcal U_\varepsilon$:
\begin{equation}
\label{eq:express_psi}
\hat\Psi_\varepsilon(\xi)=\xi+\varepsilon F_\varepsilon(\xi),
\end{equation}
where:
$$
F_\varepsilon(\xi):=\left[\chi_1(\xi)\frac{\chi_2(\xi)-1}{H_\varepsilon(\xi_1)}\xi_2-\chi_2(\xi)-\chi_1(\xi)\chi_2(\xi)\right].
$$
Since $F_\varepsilon$ is lipschitz continuous uniformly in $\varepsilon$ for every $\varepsilon$ small enough, we deduce that $\hat\Psi_\varepsilon$ is one-to-one in $\mathcal U_\varepsilon$ for every $\varepsilon$ small enough.
Finally $\hat\Psi_\varepsilon$ is a bijection from $\mathcal U_\varepsilon$ onto its image. From expression \eqref{eq:express_psi}, according to the local inversion theorem, we get that $\hat\Psi_\varepsilon^{-1}$ is also $C^1$. Using once again \eqref{eq:express_psi}, it is clear that 
$\|\hat\Psi_\varepsilon-\hat\Psi_0\|_{C^1(\overline{\mathcal U}_\varepsilon)}\to 0$ as $\varepsilon\to 0^+$ with $\hat\Psi_0={\rm Id}$.

It remains to address the convergence of $\hat\Psi_\varepsilon^{-1}$ into $\hat\Psi_0^{-1}={\rm Id}$ in $\hat\Psi(\overline{\mathcal U}_\varepsilon)$. On the one hand, we have:
\begin{equation}
\label{eq:Psi-1}
\|\hat\Psi_\varepsilon^{-1}-{\rm Id}\|_{L^\infty(\hat\Psi_\varepsilon(\overline{\mathcal U}_\varepsilon)}=\|\hat\Psi_\varepsilon-{\rm Id}\|_{L^\infty(\overline{\mathcal U}_\varepsilon)}=
\varepsilon\|F_\varepsilon\|_{L^\infty(\overline{\mathcal U}_\varepsilon)}.
\end{equation}
On the other hand, we have:
\begin{equation}
\label{identi_perturb}
D\hat\Psi_\varepsilon(\xi) = {\rm Id} + \varepsilon DF_\varepsilon(\xi)
\end{equation}
and thus, since $DF_\varepsilon$ is clearly uniformly bounded with respect to $\varepsilon$  in $L^\infty(\overline{\mathcal U}_\varepsilon)$ by some constant $C$, its inverse is given via the Neumann series 
$$(D\hat\Psi_\varepsilon(\xi))^{-1}={\rm Id}+\varepsilon G_\varepsilon(\xi),$$
with 
$$G_\varepsilon(\xi)=-\sum_{k=0}^{+\infty}(-\varepsilon)^k DF_\varepsilon(\xi)^{k+1}$$
provided $\varepsilon$ is small enough, or more precisely for $\|\varepsilon DF_\varepsilon \|_{{L^\infty(\overline{\mathcal U}_\varepsilon)}}<1$.
For such $\varepsilon$, we can write that:
\begin{align}
\|D\hat\Psi_\varepsilon^{-1}-{\rm Id}\|_{L^\infty(\hat\Psi_\varepsilon(\overline{\mathcal U}_\varepsilon)}&=\|(D\hat\Psi_\varepsilon\circ\hat\Psi_\varepsilon^{-1})^{-1}-{\rm Id}\|_{L^\infty(\hat\Psi_\varepsilon(\overline{\mathcal U}_\varepsilon)}\nonumber\\
&=\|(D\hat\Psi_\varepsilon)^{-1}-{\rm Id}\|_{L^\infty(\overline{\mathcal U}_\varepsilon)}\nonumber\\
&=\varepsilon\|G_{\varepsilon}\|_{L^\infty(\overline{\mathcal U}_\varepsilon)}\nonumber\\
&\leq\varepsilon\|DF_\varepsilon\|_{L^\infty(\overline{\mathcal U}_\varepsilon)}\nonumber\\
&\leqslant C \varepsilon.\label{eq:dpsi}
\end{align}
Gathering the estimates \eqref{eq:Psi-1} and \eqref{eq:dpsi}, we finally get:
$$\|\hat\Psi_\varepsilon^{-1}-\hat\Psi_0^{-1}\|_{C^1(\hat\Psi_\varepsilon(\overline{\mathcal U}_\varepsilon))}\to 0\text{ as }\varepsilon\to 0^+.$$

We prove \eqref{local_diffeo} by direct computation, using \eqref{express:hatPsi} for $\xi\in\mathcal V_\varepsilon(\delta')$ and \eqref{identi_perturb} for $\xi\in \Omega_\varepsilon\setminus \mathcal V_\varepsilon(\delta')$. Notice in particular 
that \eqref{identi_perturb} remains true ``up to the boundary'' between  $\mathcal V_\varepsilon(\delta')$ and $\Omega_\varepsilon\setminus \mathcal V_\varepsilon(\delta')$. The non-overlapping property \eqref{non_overlapping} is easily verified and 
the proof is now complete.
\end{proof}
\section{Proofs of technical results}
\label{SEC:technic}
\begin{proof}[of Lemma~\ref{LEM:proper_mu}]
One easily checks that the function $F_\varepsilon:=\mu_\varepsilon-\mu_{0}$ is positive on $[0,\ell_\varepsilon]$, which leads to:
$$
|\mu_\varepsilon|\leqslant |\mu_0|,
$$
and \eqref{estim_phi_esp:0} follows from \eqref{def_mu0}.
Moreover, we have $F_\varepsilon'=H_0(\mu_{0})[G_\varepsilon-1]$ where 
$$G_\varepsilon:=\frac{H_\varepsilon(\mu_\varepsilon)}{H_0(\mu_{0})}.$$
Straightforward computations lead to:
$$G'_\varepsilon=G_\varepsilon[H_0'(\mu_\varepsilon)-H_0'(\mu_{0})],$$
which is a positive function on $[0,\ell_\varepsilon]$. Since $G_\varepsilon(0)=1+\varepsilon/H_0(\delta)>1$, we deduce that $F'_\varepsilon>0$ and then that:
$$\|\mu_\varepsilon-\mu_{0}\|_{C^0([0,\ell_\varepsilon])}=-\mu_{0}(\ell_\varepsilon),$$
and \eqref{unif_conv} follows with \eqref{def_mu0} and \eqref{comp_l_eps}.

Still from \eqref{def_mu0} and \eqref{comp_l_eps}, we infer that:
$$G_\varepsilon(\ell_\varepsilon)\underset{\varepsilon=0}{\sim}\left[\alpha\frac{\pi/(\alpha+1)}{\sin(\pi/(\alpha+1))}\right]^{1+\frac{1}{\alpha}},$$
whence we deduce that, on $[0,\ell_\varepsilon]$, we have:
$$|H_\varepsilon(\mu_\varepsilon)|\leqslant CH_0(\mu_0)$$
for some constant $C>0$ dependent on $\alpha$ only. Combining this estimate again with \eqref{def_mu0} and \eqref{comp_l_eps}, we get \eqref{estim_phi_esp:1}.

Finally, since $|\mu_\varepsilon|\leqslant |\mu_0|$, we have
$$|H'_0(\mu_\varepsilon)|\leqslant |H_0'(\mu_0)|,$$
and \eqref{estim_phi_esp:2} follows, using \eqref{def_mu0} and \eqref{comp_l_eps}. The proof is now complete.
\end{proof}
\begin{proof}[of Lemma~\ref{lem_intro}]
Let us recall that the Cauchy problem \eqref{cauchy:pb} we are dealing with can be rewritten as:
\begin{subequations}
\label{cauchy:pb_2}
\begin{alignat}{3}
\varepsilon'(t)&=\varepsilon_0'F(\varepsilon(t),\varepsilon^\ast),&\quad& t>0\\
\varepsilon(t)|_{t=0}&=\varepsilon^\ast,
\end{alignat}
\end{subequations}
with
$$F(\varepsilon,\varepsilon^\ast):=\sqrt{\frac{m_s+m_f(\varepsilon^\ast)}{m_s+m_f(\varepsilon)}},$$
and $\varepsilon^\ast>0$ and $\varepsilon_0'<0$ are given.
As already mentioned, it is proved in \cite{Chambrion:2012aa} that the function 
$$\varepsilon\in ]0,\varepsilon^*[\mapsto m_f(\varepsilon)\in\mathbb R^+$$ 
is analytic and hence the function $F(\cdot,\varepsilon^\ast):]0,\varepsilon^*[\to \mathbb R^+$
has the same regularity.

The hypothesis $E_\varepsilon\to +\infty$ as $\varepsilon\to 0^+$ entails that $F(\cdot,\varepsilon^\ast)\to 0$ as $\varepsilon\to 0^+$ and therefore that 
the velocity of the solid tends to $0$ when approaching the outer boundary: real shock can not occur in this case. 

The hypothesis $E_\varepsilon=O(\varepsilon^{-\beta})$ means that there exists $0<\varepsilon^\dagger<\varepsilon^\ast$ and $C>0$ such that 
$$E_\varepsilon<C \varepsilon^{-\beta}\quad (0<\varepsilon<\varepsilon^\dagger).$$
This estimate entails that, for a different positive constant still denoted by $C$:
\begin{equation}
\label{eq:estttt}
F(\varepsilon,\varepsilon^\ast)> C\varepsilon^{\frac{\beta}{2}}\quad (0<\varepsilon<\varepsilon^\dagger).
\end{equation}
On the other hand, the function $F(\cdot,\varepsilon^\ast)$ is bounded from below on the compact $[\varepsilon^\dagger,\varepsilon^\ast]$ by some constant $c>0$. We deduce that $\varepsilon'(t)<c\varepsilon'_0$ 
as long as $\varepsilon(t)>\varepsilon^\dagger$. To simplify, let us relabel $t=0$ the time for which $\varepsilon(t)=\varepsilon^\dagger$ (this time being no greater than $\varepsilon_\eta/(c\varepsilon'_0$)).

Using now the estimate \eqref{eq:estttt} in the Cauchy problem, now restated as:
\begin{alignat*}{3}
\varepsilon'(t)&=\varepsilon_0'F(\varepsilon(t),\varepsilon^\ast),&\quad& t>0\\
\varepsilon(t)|_{t=0}&=\varepsilon^\dagger,
\end{alignat*}
we deduce that (changing again the value of the positive constant $C$):
$$\varepsilon(t)\leqslant \left[C\varepsilon_0't+(\varepsilon^\dagger)^{-\frac{\beta}{2}+1}\right]^{\frac{2}{2-\beta}},$$
and therefore, the solid meets the cavity's wall in finite time. 
\end{proof}
\begin{proof}[of Lemma~\ref{LEM:w_var}]
The Dirichlet energy of $u_\varepsilon^s$ is decomposed as follows:
$$\int_{\omega_\varepsilon}|\nabla u_\varepsilon^s(x)|^2\,{\rm d}x=\int_{\omega_\varepsilon\setminus]1,\ell_\varepsilon[\times]0,1[}|\nabla u_\varepsilon^s(x)|^2\,{\rm d}x+\int_1^{\ell_\varepsilon}\int_0^1|\nabla u_\varepsilon^s(x_1,x_2)|^2\,{\rm d}x_2{\rm d}x_1.$$
The first integral in the right hand side is uniformly bounded for $\varepsilon\geqslant 0$, so let us focus on the latter. 

In $]1,\ell_\varepsilon[\times]0,1[$, we have:
\begin{align*}
\partial_{x_1} u_\varepsilon^s(x)&=-\mu_\varepsilon(x_1)\left[1+\frac{1}{2}x_2^2 H_0''(\mu_\varepsilon(x_1))H_\varepsilon(\mu_\varepsilon(x_1))\right]\\
\partial_{x_2} u_\varepsilon^s(x)&=x_2\left[H_\varepsilon(\mu_\varepsilon(x_1))-\mu_\varepsilon(x_1)H_0'(\mu_\varepsilon(x_1))\right].
\end{align*}
Expending $|\partial_{x_1} u_\varepsilon^s(x)|^2+|\partial_{x_2} u_\varepsilon^s(x)|^2$, and using the estimates \eqref{estim_phi_esp}, we obtain that the only remaining term which is not uniformly bounded (with respect to $\varepsilon\geqslant 0$) by a  function in $L^1(\omega_0)$ is:
$$\int_1^{\ell_\varepsilon}\int_0^1|\mu_\varepsilon(x_1)|^2\,{\rm d}x_2{\rm d}x_1=\int_1^{\ell_\varepsilon}|\mu_\varepsilon(x_1)|^2\,{\rm d}x_1.$$
The change of variables $\xi_1=\mu_\varepsilon(x_1)$ leads to:
$$\int_1^{\ell_\varepsilon}|\mu_\varepsilon(x_1)|^2\,{\rm d}x_1=\int_{\mu_\varepsilon(1)}^0\frac{\xi_1^2\,{\rm d}\xi_1}{\kappa |\xi_1|^{1+\alpha}+\varepsilon}.$$
Setting now $\zeta=\kappa^{\frac{1}{\alpha+1}}\xi_1/\varepsilon^{\frac{1}{\alpha+1}}$, we can transform the expression above into:
$$
\int_{\mu_\varepsilon(1)}^0\frac{\xi_1^2\,{\rm d}\xi_1}{\kappa |\xi_1|^{1+\alpha}+\varepsilon}=\kappa^{-\frac{3}{\alpha+1}}\varepsilon^{\frac{3}{1+\alpha}-1}\int^0_{\beta(\varepsilon)}\frac{\zeta^2\,{\rm d}\zeta}{|\zeta|^{1+\alpha}+1},
$$
where $\beta(\varepsilon):=\kappa^{-\frac{1}{1+\alpha}}\mu_\varepsilon(1)\varepsilon^{-\frac{1}{1+\alpha}}$. We have now to distinguish, according to the value of $\alpha$:
\begin{equation}
\label{second}
\int^0_{\beta(\varepsilon)}\frac{\zeta^2\,{\rm d}\zeta}{|\zeta|^{1+\alpha}+1}=
\begin{cases}
\int_{\beta(\varepsilon)}^0\frac{{\rm d}\zeta}{|\zeta|+1}+\int_{\beta(\varepsilon)}^0\frac{|\zeta|- 1}{|\zeta|^3+1}\,{\rm d}\zeta&\text{if }\alpha=2\\
\int^0_{-\infty}\frac{\zeta^2\,{\rm d}\zeta}{|\zeta|^{1+\alpha}+1}-\int^{\beta(\varepsilon)}_{-\infty}\frac{\zeta^2\,{\rm d}\zeta}{|\zeta|^{1+\alpha}+1}&\text{if }\alpha>2.
\end{cases}
\end{equation}
We deduce that, when $\alpha=2$:
$$\int^0_{\beta(\varepsilon)}\frac{\zeta^2\,{\rm d}\zeta}{|\zeta|^{1+\alpha}+1}\underset{\varepsilon=0}{\sim}|\ln|\beta(\varepsilon)||\underset{\varepsilon=0}{\sim}\frac{1}{3}|\ln(\varepsilon)|.$$
When $\alpha>2$, since the second term in \eqref{second} tends to 0 with $\varepsilon$, we are led to compute the value of:
$$\int_0^{+\infty}\frac{x^2\,{\rm d}x}{x^{1+\alpha}+1}.$$
Yet another change of variable, namely $y=x^3$, allows us to do that. We finally get:
$$ \int_0^{+\infty}\frac{x^2\,{\rm d}x}{x^{1+\alpha}+1}=\frac{1}{3}\int_0^{+\infty}\frac{{\rm d}y}{y^{\frac{1+\alpha}{3}}+1}=\frac{1}{3}\frac{3\pi/(1+\alpha)}{\sin(3\pi/(1+\alpha)}.$$
The proof is then complete.
\end{proof}
\begin{proof}[of Lemma~\ref{LEM:conv_f_0}]

We observe first that $f^r_\varepsilon=0$ in $D$ and $g_\varepsilon^r=0$ in $\gamma_\varepsilon^D$ for every $\varepsilon\geqslant 0$ because of the cut-off function $\chi$ and hence 
$\omega_\varepsilon$  can be replaced by $R_\varepsilon$ and $\gamma_\varepsilon$ by $\gamma_\varepsilon^R$ in \eqref{eq:converg_f_r}. Recall that (identities \eqref{def_f_r2} and \eqref{def_f_r1}):
\begin{subequations}
\label{def_f_r3}
\begin{alignat}{3}
f^r_\varepsilon&=\left[\dv(\mathbb A_\varepsilon\nabla \chi)\hat u_\varepsilon^s+2\mathbb A_\varepsilon:(\nabla\chi\otimes\nabla \hat u_\varepsilon^s)\right]-\chi \hat f^s_\varepsilon&\quad&\text{in }\omega_\varepsilon\\
g^r_\varepsilon&=\left[-\hat u_\varepsilon^s(\mathbb A_\varepsilon\nabla\chi)\cdot n+(1-\chi)g_\varepsilon\right]-\chi \hat r_\varepsilon^s&&\text{on }\gamma_\varepsilon.
\end{alignat}
\end{subequations}
Using the expressions \eqref{def:v} and \eqref{def:V} of $v_\varepsilon$ and $V_\varepsilon$ in \eqref{def:hatf} and \eqref{def:hatr}, we get, for every $x\in R_\varepsilon$:
\begin{subequations}
\label{les_trucs}
\begin{multline}
\hat f^s_\varepsilon(x_1,x_2)=x_2^2\left[\frac{3}{2}H_\varepsilon(X_1^\varepsilon)^2{H_0''}(X_1^\varepsilon)+
\frac{1}{2}X_1^\varepsilon H_\varepsilon(X_1^\varepsilon)^2H_0'''(X_1^\varepsilon)\right.\\
\left.-3X_1^\varepsilon H_\varepsilon(X_1^\varepsilon){H'_0}(X_1^\varepsilon){H_0''}(X_1^\varepsilon)-3H_\varepsilon(X_1^\varepsilon){H'_0}(X_1^\varepsilon)^2+3X_1^\varepsilon 
{H'_0}(X_1^\varepsilon)^2\right],
\end{multline}
and for every $x\in \gamma^R_\varepsilon$:
\begin{equation}
\hat r^s_\varepsilon(x)=\left[\frac{1}{2}X_1^\varepsilon H_\varepsilon(X_1^\varepsilon){H'_0}(X_1^\varepsilon){H_0''}(X_1^\varepsilon)+H_\varepsilon(X_1^\varepsilon){H'_0}(X_1^\varepsilon)^2
-X_1^\varepsilon{H'_0}(X_1^\varepsilon)^3\right].
\end{equation}
\end{subequations}
Observing that $|{H_0''}(\mu_\varepsilon)|$ and $|H_0'''(\mu_\varepsilon)|$ are uniformly bounded (because $\alpha>2$), we deduce, according to the estimates 
\eqref{estim_phi_esp}, that the functions:
$$x\mapsto |\hat f^s_\varepsilon(x)|^2(1+x_1)^2\quad\text{and}\quad x\mapsto |\hat r^s_\varepsilon(x)|^2(1+x_1)^2,$$ are uniformly (in $\varepsilon\geqslant 0$) bounded by a function belonging to $L^1(R_0)$ and $L^1(\gamma_0^R)$ respectively.
Invoking again Lemma~\ref{LEM:proper_mu} and applying the dominated convergence theorem, we get that: 
$$\|\hat f^s_\varepsilon-\hat f^s_0\|_{L^2(R_\varepsilon,{\rm d}m^{-1})}\to 0\quad\text{ and }\quad\|\hat r^s_\varepsilon-\hat r^s_0\|_{L^2(\gamma^R_\varepsilon,\,{\rm d}\sigma^{-1})}\to 0\text{ as }\varepsilon\to 0^+.$$

Using Proposition~\ref{prop:chgt_variables}, we deduce that $-\dv(\mathbb A_\varepsilon\nabla \chi)$ converges uniformly in $]0,1[\times ]0,1[$ to 
$-\dv(\mathbb A_0\nabla \chi)$ and since, in addition, $\hat u_\varepsilon^s$ converges 
in $C^1([0,1]^2)$ to $\hat u^s_0$,  we get, according to the identities \eqref{def_f_r3} that 
$$\|f^r_\varepsilon-f^r_0\|_{L^2(R_\varepsilon,{\rm d}m)}\to 0\text{ as }\varepsilon\to 0^+.$$
We proceed similarly to show the second convergence result.

We know (see above Lemma\ref{LEM:conv_f_0}) that $f_\varepsilon^r$ and $g_\varepsilon^r$ satisfy the compatibility condition \eqref{comp:cond} for every $\varepsilon>0$. 
Lemma~\ref{lem:conv_l1} and the convergence results above allow us to pass to limit in \eqref{comp:cond}. The proof is now complete.
\end{proof}

\bibliographystyle{siam} 
\bibliography{contact_bib}

\end{document}